\def\temp{&} \catcode`&=\active \let&=\temp
\tikzset{cong/.style={above,edge node={node [sloped, allow upside down, auto=false]{$\cong$}}},
		 Isom/.style={above,every to/.append style={edge node={node [sloped, allow upside down, auto=false]{\raisebox{-0.7 ex}[0ex][0ex]{\smash{\ensuremath{\sim}}}}}}},
		 Isom_inv/.style={above,every to/.append style={edge node={node [sloped, allow upside down, auto=false]{\raisebox{0.7 ex}[0ex][0ex]{\smash{\ensuremath{\sim}}}}}}}
		 }
\newcommand{\xrightarrowdbl}[2][]{%
\xrightarrow[#1]{#2}\mathrel{\mkern-14mu}\rightarrow
}
\newcommand\xrightarrowtail[2][]{\ensurestackMath{\mathrel{%
\stackengine{1pt}{%
  \stackengine{0pt}{\rightarrowtail}{\scriptstyle#2}{O}{c}{F}{F}{S}%
}{\scriptstyle#1}{U}{c}{F}{F}{S}%
}}}
\newcommand{\eqarrow}{\xrightarrow{\raisebox{-0.7 ex}[0ex][0ex]{\smash{\ensuremath{\sim}}}}}
\newcommand{\bicart}[1][0]{\ar[rd, phantom, "\square", xshift=#1]}
\theoremstyle{definition}
\newtheorem{definition}{Definition}[section]
\newtheorem{theorem}{Theorem}
\newtheorem*{theorem*}{Theorem}
\newtheorem{notation}[definition]{Notation}
\newtheorem{corollary}[definition]{Corollary}
\newtheorem*{corollary*}{Corollary}
\newtheorem{lemma}[definition]{Lemma}
\newtheorem{prop}[definition]{Proposition}
\newtheorem*{acknowledgements}{Acknowledgements}
\newtheorem{warning2}[definition]{Warning}
\newtheorem{remark}[definition]{Remark}
\newtheorem{example}[definition]{Example}
\newcommand{\HA}[1]{\cite[#1]{lurie_higher_2017}\xspace}
\newcommand{\CIS}[1]{\cite[#1]{cisinski_higher_2019}\xspace}
\newcommand{\HTT}[1]{\cite[#1]{lurie_higher_2009}\xspace}
\newcommand{\SAG}[1]{\cite[#1]{lurie_spectral_nodate}\xspace}
\newcommand{\eqdot}{.}
\newcommand{\eqcomma}{,}
\newcommand{\D}{\mathcal{D}}
\providecommand{\C}{}
\renewcommand{\C}{\mathcal{C}}
\renewcommand{\H}{\mathcal{H}}
\newcommand{\A}{\mathcal{A}}
\newcommand{\B}{\mathcal{B}}
\newcommand{\E}{\mathcal{E}}
\newcommand{\F}{\mathcal{F}}
\newcommand{\N}{\mathbb{N}}
\newcommand{\Z}{\mathbb{Z}}
\newcommand{\K}{\mathcal{K}}
\newcommand{\R}{\mathcal{R}}
\newcommand{\SW}{\mathcal{SW}}
\newcommand{\ModA}{\operatorname{Mod}(\A)}
\newcommand{\Mod}{\operatorname{Mod}(\ho(\E))}
\newcommand{\modA}{\operatorname{mod}(\A)}
\newcommand{\Eff}{\operatorname{eff}(\ho(\E))}
\newcommand{\EffA}{\operatorname{eff}(\A)}
\newcommand{\dbmod}{D^b(\ModA)}
\newcommand{\dbsmod}{D^b(\modA)}
\newcommand{\kba}{\K^b(\A)}
\newcommand{\kbe}{\K^b(\E)}
\newcommand{\dbe}{D^b(\E)}
\newcommand{\cat}{\texorpdfstring{$\infty$}{infinity}-category\xspace}
\newcommand{\cats}{\texorpdfstring{$\infty$}{infinity}-categories\xspace}
\newcommand{\categorical}{\texorpdfstring{$\infty$}{infinity}-categorical\xspace}
\newcommand{\onecat}{\texorpdfstring{$1$}{1}-category\xspace}
\newcommand{\sh}{\mathcal{H}^\text{st}}
\newcommand{\she}{\sh(\E)}
\newcommand{\shge}{\sh_{\geq 0}(\E)}
\newcommand{\PSf}{\Pres_{\Sigma, f}(\E)}
\newcommand{\PS}{\Pres_\Sigma(\E)}
\newcommand{\PSA}{\Pres_\Sigma(\A)}
\newcommand{\PSfA}{\Pres_{\Sigma, f}(\A)}
\newcommand{\Fun}{\operatorname{Fun}}
\newcommand{\cofib}{\operatorname{cofib}}
\newcommand{\coker}{\operatorname{coker}}
\newcommand{\fib}{\operatorname{fib}}
\newcommand{\Pres}{\mathcal{P}}
\newcommand{\PSift}{\Pres_\Sigma}
\newcommand{\heart}{^\heartsuit}
\newcommand{\core}{\operatorname{core}}
\newcommand{\colim}{\operatorname{colim}}
\newcommand{\const}{\operatorname{Const}}
\providecommand{\U}{}
\renewcommand{\U}{\mathcal{U}}
\newcommand{\V}{\mathcal{V}}
\newcommand{\ho}{\operatorname{Ho}}
\newcommand{\op}{^{op}}
\newcommand{\id}{id}
\newcommand{\map}{\operatorname{Map}}
\newcommand{\catab}{\ensuremath{\mathbf{Ab}}\xspace}
\newcommand{\catspaces}{\ensuremath{\mathbf{S}}\xspace}
\newcommand{\catspectra}{\ensuremath{\mathbf{Sp}}\xspace}
\newcommand{\catstable}{\ensuremath{\mathbf{St}_{\infty}}\xspace}
\newcommand{\catexact}{\ensuremath{\mathbf{Ex}_{\infty}}\xspace}
\newcommand{\simcatexact}{\ensuremath{\mathbf{Ex}^\Delta_{\infty}}\xspace}
\newcommand{\ses}{x \xrightarrowtail{i} y \xrightarrowdbl{p} z}
\author{Jona Klemenc}
\email{jona.klemenc@uni-bonn.de}
\title[The stable hull of an exact infinity-category]{The stable hull of an exact $\infty$-category}
\begin{document}
\begin{abstract}
We construct a left adjoint $\mathcal{H}^\text{st}\colon \mathbf{Ex}_{\infty} \rightarrow \mathbf{St}_{\infty}$
to the inclusion $\mathbf{St}_{\infty} \hookrightarrow \mathbf{Ex}_{\infty}$
of the $\infty$-category of stable $\infty$-categories into the $\infty$-category
of exact $\infty$-categories, which we call the stable hull.
For every exact $\infty$-category $\mathcal{E}$, the unit functor
$\mathcal{E} \rightarrow \mathcal{H}^\text{st}(\mathcal{E})$ is fully faithful and preserves and reflects
exact sequences.
This provides an $\infty$-categorical variant of the Gabriel--Quillen embedding 
for ordinary exact categories. 
If $\mathcal{E}$ is an ordinary exact category,
the stable hull $\mathcal{H}^\text{st}(\mathcal{E})$ is equivalent
to the bounded derived $\infty$-category of $\mathcal{E}$.
\end{abstract}
\maketitle
\tableofcontents
\section*{Introduction}
Every abelian category has a canonical structure of an ordinary exact category
given by the class of all short exact sequences.
Conversely, every ordinary small exact category admits an embedding into an abelian category with good properties,
the Gabriel--Quillen embedding.
\begin{theorem*}[{\cite[Th.~A.7.1]{thomason_higher_2007}}]\label{gabriel_quillen_embedding}
	Let $\E$ be a small exact category. Then, there is an abelian category $\A$
	and a fully faithful exact functor $\E \rightarrow \A$ that reflects exact sequences.
	Moreover, $\E$ is closed under extensions in $\A$.
\end{theorem*}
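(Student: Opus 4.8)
The plan is to embed $\E$ into a category of additive functors and then cut that down to the ``left exact'' ones, reconstructing the classical Gabriel--Quillen argument. First I would set $\A$ to be the full subcategory of the additive presheaf category $\Fun(\E\op, \catab)$ consisting of those additive functors $F$ that send every conflation $x \rightarrowtail y \twoheadrightarrow z$ to a left exact sequence $0 \to F(z) \to F(y) \to F(x)$ of abelian groups. The covariant Yoneda functor $h \colon \E \to \Fun(\E\op, \catab)$, $y \mapsto \operatorname{Hom}_\E(-,y)$, is additive and fully faithful by the Yoneda lemma, and a short computation with the universal property of the cokernel in a conflation shows every representable is left exact; hence $h$ factors through a fully faithful $h \colon \E \to \A$.

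The technical core, and the step I expect to be the main obstacle, is proving that $\A$ is a Grothendieck abelian category whose colimits are computed correctly. I would equip $\E$ with the Grothendieck pretopology whose covering families are generated by single deflations; this is legitimate because in an exact category deflations are stable under base change and closed under composition. The sheaf condition for the one-element cover $\{y \twoheadrightarrow z\}$ unwinds, using $y \times_z y \cong y \oplus x$, to precisely the left exactness of $F$, so $\A$ is exactly the category of additive sheaves on this site. Invoking the theory of sheaves on a site (equivalently Gabriel--Popescu), $\A$ is then Grothendieck abelian, the inclusion $\A \hookrightarrow \Fun(\E\op, \catab)$ admits an exact left adjoint $a$, limits in $\A$ are computed pointwise, and the cokernel of a map in $\A$ is $a$ applied to its pointwise cokernel. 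The same unwinding shows the topology is subcanonical, so representables really are sheaves; verifying exactness of the reflector $a$ is the load-bearing point on which everything below depends.

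Granting this, the remaining assertions become essentially formal. For exactness of $h$: given a conflation $x \rightarrowtail y \twoheadrightarrow z$, the pointwise kernel of $h_y \to h_z$ is $h_x$, while $h_y \to h_z$ is an epimorphism in $\A$ because $y \twoheadrightarrow z$ is a cover, so $0 \to h_x \to h_y \to h_z \to 0$ is short exact in $\A$. For reflection of exactness I would reverse this: if $h_x \to h_y \to h_z$ is short exact in $\A$, then $h_y \to h_z$ is an epimorphism of sheaves, which by subcanonicity forces $y \to z$ to be a deflation, and the pointwise kernel condition identifies $x$ as its kernel; full faithfulness transports both facts back to a conflation in $\E$.

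Finally, for closure under extensions, I would take a short exact sequence $0 \to h_x \to F \to h_z \to 0$ in $\A$ with $x, z \in \E$ and show that $F$ is representable. Since $F \to h_z$ is an epimorphism onto a representable, the identity of $z$ lifts along a cover, yielding a deflation $y \twoheadrightarrow z$ and a map $h_y \to F$ lying over $h_z$. Writing $K$ for the kernel of $y \twoheadrightarrow z$, the composite $h_K \to h_y \to F$ lands in $h_x$ and so determines a morphism $K \to x$ in $\E$; forming the pushout of $K \rightarrowtail y$ along $K \to x$ (an inflation, by the pushout axiom) produces a conflation $x \rightarrowtail y' \twoheadrightarrow z$, and exactness of $h$ together with the five lemma identifies $h_{y'}$ with $F$. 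Equivalently, this is the assertion that $\operatorname{Ext}^1_\A(h_z, h_x)$ agrees with the Yoneda $\operatorname{Ext}^1$ of $\E$, every class of which is realized by a genuine conflation.
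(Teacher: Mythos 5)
Your argument is the classical Gabriel--Quillen construction and is essentially correct, but note that the paper does not prove this theorem at all: it quotes it from Thomason--Trobaugh and instead develops an \emph{$\infty$-categorical} analogue. Where you form the abelian category of additive sheaves for the topology generated by deflations (equivalently, the left exact additive functors inside $\Fun(\E\op,\catab)$) and check the Yoneda image directly, the paper works with space-valued product-preserving presheaves $\PS$, carves out the finite-colimit closure $\PSf$ of the representables, isolates the effaceable and primitive acyclic objects that play the role of your ``locally zero'' presheaves, and passes to a Verdier quotient; \Cref{prop:she_reflects_preserves} is its explicitly acknowledged analogue, following Keller, of your last two paragraphs (reflection of exactness and closure under extensions, via lifting the identity of $z$ along a cover and pushing out along the kernel). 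Your route buys the classical statement with a Grothendieck abelian target; the paper's route buys a stable \cat with a universal property among exact functors to stable \cats, which has no $1$-categorical counterpart.

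One step of yours needs repair. In the reflection argument you assert that an epimorphism of sheaves $h_y \to h_z$ ``by subcanonicity forces $y \to z$ to be a deflation''. Subcanonicity (representables are sheaves) is not the operative fact. What the epimorphism actually gives is a local lift of $\id_z$, i.e.\ a deflation $w \twoheadrightarrow z$ factoring through $y \to z$; to conclude that $y \to z$ is itself a deflation you must invoke Quillen's redundant ``obscure axiom'' (a morphism admitting a kernel which becomes a deflation after precomposition with some morphism is a deflation), together with the observation that the kernel of $h_y \to h_z$ in $\A$ is computed pointwise, so that $x \to y$ really is a kernel of $y \to z$ in $\E$. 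That lemma is provable from the exact-category axioms (Keller), so the gap is reparable, but as written the step does not follow from what you cite.
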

As extension-closed subcategories of abelian categories are exact categories,
this gives an alternative description of exact categories as extension-closed subcategories
of abelian categories.

From the point of view of derived categories, it is natural to work with the structure
of triangulated categories, which do not have good categorical properties. In recent
years, Lurie proposed an enhancement for triangulated categories called \emph{stable \cats} \cite{lurie_stable_2009}.

Exact \cats were introduced by Barwick in \cite{barwick_exact_2015} as a generalization of
ordinary exact categories in the sense of Quillen \cite{quillen_higher_1973}. Small exact \cats together with exact functors between them can be organized into
an \cat, \catexact. The \cat \catexact contains as a full subcategory both
\begin{itemize}[noitemsep,topsep=0pt]
	\item the category of ordinary small exact categories and exact functors between them and
	\item the \cat \catstable of small stable \cats and exact functors between them.
\end{itemize}
In this article we construct a functor 
\begin{equation*}
    \sh\colon \catexact \rightarrow \catstable
\end{equation*}
called the
\textit{stable hull} functor.
The main result of this article is the following theorem,
whose proof is given at the end of \Cref{sec:props_of_stable_hull}.
\begin{theorem}\label{thm:main}
    The functor $\sh$ is left adjoint to the inclusion 
    $\catstable \hookrightarrow \catexact$.
    Moreover, for every exact \cat $\E$, the unit functor
    $\eta_{\E}\colon \E \rightarrow \she$
    \begin{enumerate}
        \item\label{item:eta_ff}is fully faithful,
        \item\label{item:eta_preserves}preserves and reflects exact sequences and
        \item\label{item:eta_closed_under_ext} the essential image of $\E$ in $\she$ is closed under extensions.
    \end{enumerate}
\end{theorem}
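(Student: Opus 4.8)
The plan is to construct the stable hull explicitly in three stages and then extract both the adjunction and the three unit properties from the construction. First I would form the sifted cocompletion $\PS$, into which $\E$ embeds fully faithfully by the \categorical Yoneda lemma; since $\E$ is exact, hence additive, $\PS$ carries a canonical prestable structure, but it does not yet remember which sequences of $\E$ are declared exact. Second, I would impose the exact structure by a Bousfield localization forcing each admissible short exact sequence $\ses$ to become a cofiber sequence, obtaining a prestable \cat $\shge$ together with a functor $\E \to \shge$. Third, I would stabilize $\shge$ to obtain the stable \cat $\she$ (using the Spanier--Whitehead-type construction $\SW$), with $\eta_\E$ the composite $\E \to \shge \to \she$.

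For the adjunction I would verify the universal property directly. Given a stable \cat $\D$ and an exact functor $F\colon\E\to\D$, I extend $F$ along the sifted cocompletion by its universal property, check that the extension inverts the morphisms at which we localized (since $F$ sends exact sequences to cofiber sequences and $\D$ is already stable), descend along the localization to $\shge$, and finally extend along the stabilization. Uniqueness follows because $\E$ generates $\she$ as a stable subcategory, so two exact functors agreeing on $\E$ must agree everywhere. This shows simultaneously that $\sh$ is left adjoint to the inclusion and that $\eta_\E$ is the unit.

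The three properties are then read off as follows. Preservation of exact sequences in (\ref{item:eta_preserves}) is built into the localization, since $\ses$ becomes a cofiber sequence in $\she$ by design, and in a stable \cat the exact sequences are exactly the fiber--cofiber sequences. For full faithfulness (\ref{item:eta_ff}) the Yoneda embedding into $\PS$ is fully faithful, so the point is to show that neither the localization nor the stabilization alters the mapping spaces between objects of $\E$, i.e.\ that $\map_{\she}(\eta_\E x,\eta_\E y)\simeq\map_\E(x,y)$. Reflection of exact sequences in (\ref{item:eta_preserves}) together with closedness under extensions (\ref{item:eta_closed_under_ext}) assert that $\E$ sits inside the prestable \cat $\shge$ exactly as an extension-closed subcategory recording the original exact structure, which I would establish from the compatibility of the localization with the prestable t-structure.

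The hard part will be the localization step together with the ``hard direction'' of the unit properties that depend on it, namely full faithfulness (\ref{item:eta_ff}) and the reflection and extension-closedness in (\ref{item:eta_preserves})--(\ref{item:eta_closed_under_ext}). Preservation of exactness and the formal half of the adjunction are soft, but controlling the mapping spaces after localizing---equivalently, ruling out \emph{new} cofiber sequences and extensions among objects of $\E$ that were not already exact in $\E$---requires a genuine analysis of how $\E$ embeds into the heart of the t-structure on $\shge$, rather than formal nonsense. I expect this to be the technical core of the argument.
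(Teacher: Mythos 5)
Your three-stage architecture is the same as the paper's: embed $\E$ by Yoneda into product-preserving presheaves (the paper works with the finite-colimit closure $\PSf\subseteq\PS$ to retain smallness), localize at the maps $\cofib(\hat{\imath})\to\hat{z}$ attached to exact sequences to obtain a prestable $\shge$, and stabilize via the Spanier--Whitehead construction to get $\she\simeq\SW(\shge)$; the adjunction then does follow formally by chaining the three universal properties, as you describe. The problem is that everything you defer as ``the technical core'' is the actual content of the theorem, and your proposal contains no argument for any of it: you assert that the localization and stabilization do not change mapping spaces between objects of $\E$ and that no new extensions or exact sequences among objects of $\E$ are created, but these are exactly the statements to be proved, and they do not follow from ``compatibility of the localization with the prestable t-structure'' in any formal sense.

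Concretely, the missing analysis consists of: (i) the projectivity of representables, $\pi_0(\map(\hat{x},\Sigma F))=0$, which identifies the primitive acyclics with the \emph{effaceable} objects of the heart $\Mod$, i.e.\ cokernels of maps $\overline{y}\to\overline{z}$ induced by fibrations; (ii) the proof that effaceables form a weakly Serre subcategory of $\Mod$ (closed under extensions, kernels and cokernels), which is where the genuine diagram lemmas in exact \cats enter (horseshoe-type arguments and the $3\times 3$ extension lemmas of the appendix) and which yields that every acyclic object has effaceable homotopy groups; (iii) the resulting fact that $\tau_{\geq 0}$ preserves acyclics and quasi-isomorphisms, which is what makes a left calculus of fractions compute $\map_{\she}(\hat{x},\hat{y})$ and gives full faithfulness of $\shge\hookrightarrow\she$ and of $\E\hookrightarrow\shge$; and (iv) for reflection and extension-closedness, an explicit lifting of an abstract exact sequence in $\shge$ back to $\E$, obtained by writing $\pi_0$ of the relevant acyclic as the cokernel of a fibration and pulling that fibration back, again using projectivity. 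Without (i)--(iv) your proof of the ``only if'' half of the localization step is fine, but items (\ref{item:eta_ff}), the reflection half of (\ref{item:eta_preserves}), and (\ref{item:eta_closed_under_ext}) remain entirely unproved.
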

In fact, for every exact \cat $\E$ and every stable \cat $\C$, restriction along the unit functor $\E \rightarrow \she$
induces an equivalence of \cats
\[
    \Fun^\text{ex}(\she, \C) \eqarrow \Fun^\text{ex}(\E, \C)
\]
between the full subcategories of the functor \cats spanned by those functors which are exact.
The study of a closely related universal property of the bounded derived category of an ordinary exact category
was initiated by Keller using \textit{epivalent towers} \cite{keller_derived_1991} and later
advanced by Porta \cite{porta_universal_2017} using the language of derivators.
Recently, Bunke, Cisinski, Kasprowski and Winges showed that for an ordinary exact category $\E$,
the canonical functor $\E \rightarrow \dbe$ into the bounded derived \cat of $\E$ is the universal exact functor into
a stable \cat \cite[Cor.~7.59]{bunke_controlled_2019}.
Their result readily implies the existence of a canonical equivalence
    \begin{equation}\label{eq:equivalence_bounded_derived}
        \dbe \eqarrow \she \eqdot
    \end{equation}
Note that if $\E$ is moreover abelian, a universal property of $\dbe$ as a stable \cat with a t-structure
was established by Antieau, Gepner and Heller \cite[Prop.~3.26]{antieau_k-theoretic_2019}.
In the context of exact categories, t-structures are in general not available.

\Cref{thm:main} and the equivalence in \eqref{eq:equivalence_bounded_derived} show that unit functor $\eta_{\E}\colon \E \rightarrow \she$ provides both
\begin{itemize}[noitemsep,topsep=0pt]
	\item an \categorical variant of the Gabriel--Quillen embedding and
	\item a generalization of the bounded derived \cat of an ordinary exact category
	to the more general class of exact \cats.
\end{itemize}
Furthermore, in light of \cite[Ex.~3.5]{barwick_exact_2015}, \Cref{thm:main} immediately yields the following characterization of exact \cats:

\begin{corollary*}
    Let $\E$ be a small additive \cat and let $\E_\dagger$, $\E^\dagger$ be subcategories of $\E$.
    The following are equivalent:
    \begin{enumerate}
        \item The triple $(\E, \E_\dagger, \E^\dagger)$ is an exact \cat.
        \item There exists a stable \cat $\C$ and a fully faithful functor $\E \hookrightarrow \C$ 
            such that
            \begin{enumerate}
                \item the essential image of $\E$ is closed under extensions in $\C$,
                \item a morphism in $\E$ lies in $\E_\dagger$ if and only if its cofiber in $\C$
                lies in the essential image of $\E$ and
                \item a morphism in $\E$ lies in $\E^\dagger$ if and only if its fiber in $\C$
                lies in the essential image of $\E$.
            \end{enumerate}
    \end{enumerate}
\end{corollary*}
\section{Preliminaries}
We use freely the language of \cats as developed in \cite{lurie_higher_2009,lurie_higher_2017}.
\subsection{Stable and prestable \cats}
First proposed by Lurie \cite{lurie_stable_2009}, stable \cats provide an enhancement of
triangulated categories in the sense of Verdier \cite{verdier_categories_nodate}.
\begin{definition}[\HA{Prop.~1.1.1.9}]
	An \cat $\C$ is \textit{stable} if it has the following properties:
	\begin{enumerate}
		\item The \cat $\C$ is pointed, that is it admits a zero object.
		\item The \cat $\C$ admits finite limits and finite colimits.
		\item A square in $\C$ is cocartesian if and only if it is cartesian.
	\end{enumerate}
\end{definition}
\begin{prop}[\HA{Th.~1.1.2.14}]
	Let $\C$ be a stable \cat. The homotopy category $\ho(\C)$ is additive and
	has a canonical triangulated structure.
\end{prop}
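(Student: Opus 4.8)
The plan is to treat the two assertions in turn.

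\textbf{Additivity.} First I would observe that $\ho(\C)$ is pointed: a zero object of $\C$ is simultaneously initial and terminal, and both properties descend to the homotopy category. Next, since $\C$ admits finite products and coproducts and every cocartesian square is cartesian, the canonical comparison map $X \sqcup Y \to X \times Y$ is an equivalence, so $\ho(\C)$ has finite biproducts and is thus semiadditive; each morphism set $\hom_{\ho(\C)}(X,Y)$ consequently inherits a natural commutative monoid structure. To promote these monoids to abelian groups I would exploit the suspension equivalence: because $\Sigma$ and $\Omega$ are mutually inverse autoequivalences of $\C$, one has $Y \simeq \Omega\Sigma Y$, and since $\map_\C(X,-)$ preserves limits this yields
\[
\map_\C(X,Y) \simeq \Omega\, \map_\C(X, \Sigma Y),
\]
exhibiting the mapping space as a loop space. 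Its set of connected components $\hom_{\ho(\C)}(X,Y) = \pi_0\,\map_\C(X,Y)$ is therefore a group, and iterating the identification once more presents it as a double loop space, so the group is abelian; bilinearity of composition is automatic from functoriality. This proves $\ho(\C)$ is additive.

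\textbf{Triangulated structure.} I would take the shift functor to be the suspension, $[1] := \Sigma$, which is an autoequivalence of $\ho(\C)$ by stability, and declare a triangle distinguished exactly when it is isomorphic in $\ho(\C)$ to one induced by a pushout square
\[
\begin{tikzcd}
X \ar[r] \ar[d] & Y \ar[d] \\
0 \ar[r] & Z
\end{tikzcd}
\]
in $\C$, the connecting map being the boundary $Z \simeq \cofib(X \to Y) \to \Sigma X$. Axioms (TR1) and (TR3) are then immediate from the universal property of pushouts and the existence of cofibers: every map extends to such a square, the identity triangle arises from the square with $Z = 0$, and a commuting square of maps lifts to a morphism of triangles by functoriality of the cofiber. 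For the rotation axiom (TR2) I would paste a second pushout square to the right,
\[
\begin{tikzcd}
X \ar[r] \ar[d] & Y \ar[r] \ar[d] & 0 \ar[d] \\
0 \ar[r] & Z \ar[r] & W
\end{tikzcd}
\]
so that the pasting lemma identifies the outer rectangle as a pushout and hence $W \simeq \Sigma X$, while the right-hand square exhibits the rotated sequence $Y \to Z \to \Sigma X$ as a cofiber sequence; the only subtlety here is the bookkeeping of the sign of the identification $W \simeq \Sigma X$.

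\textbf{Main obstacle.} The remaining and most delicate point is the octahedral axiom (TR4). Given composable maps $X \to Y \to Z$, I would encode their cofibers in a single diagram in $\C$ indexed by a $2\times 3$ grid of squares of the type above, each required to be a pushout; repeated application of the pasting lemma then identifies the various cofibers with one another and produces all four distinguished triangles together with the two commuting faces of the octahedron at once, functorially in the input data. Organizing this coherence — as opposed to verifying any single isomorphism — is the technical heart of the argument, and it is precisely here that the $\infty$-categorical framework pays off: the compatibilities that would otherwise have to be checked by hand are supplied automatically by the contractibility of the relevant spaces of pushout cocones.
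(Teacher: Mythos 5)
The paper offers no proof of this proposition---it is imported verbatim from \HA{Th.~1.1.2.14}---and your sketch follows essentially the same route as the argument given there: additivity via biproducts together with the identification of mapping spaces as double loop spaces using $\Omega\Sigma \simeq \mathrm{id}$, and the triangulated structure via cofiber sequences, with the octahedral axiom extracted from a grid of pushout squares and the pasting lemma. The points you leave implicit (verifying that the square exhibiting $X \times Y$ as a pushout of $X \leftarrow 0 \rightarrow Y$ is in fact cartesian, the sign in the identification $W \simeq \Sigma X$ for (TR2), and the reverse rotation) are exactly the routine verifications carried out in the cited reference, so there is no genuine gap.
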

\begin{definition}[\HA{Def.~1.2.1.4}]
	Let $\C$ be a stable \cat. A \textit{t-structure on $\C$} is a t-structure on the homotopy
    category $\ho(\C)$.
    More precisely, if $\C$ is equipped with a t-structure, we denote  the full subcategories of $\C$ spanned by those objects belonging to
	$(\ho(\C))_{\geq n}$ and $(\ho(\C))_{\leq n}$ by $\C_{\geq n}$ and
	$\C_{\leq n}$, respectively.
\end{definition}
\begin{prop}[\HA{Prop.~1.2.1.5, Prop.~1.2.1.10, Rem.~1.2.1.1}]
	Let $\C$ be a stable \cat with a t-structure. Then the
	following statements hold:
	\begin{enumerate}
		\item For all $n \in \N$, the inclusion $\C_{\geq n} \hookrightarrow \C$ admits
			a right adjoint $\tau_{\geq n}\colon  \C \rightarrow \C_{\geq n}$.
		\item For all $n \in \N$, the inclusion $\C_{\leq n} \hookrightarrow \C$ admits
			a left adjoint $\tau_{\leq n}\colon  \C \rightarrow \C_{\leq n}$.
		\item For all $m, n \in \N$, there is a natural equivalence
		\begin{equation*}
			\tau_{\leq m} \circ \tau_{\geq n} \simeq \tau_{\geq n} \circ \tau_{\leq m} \eqdot
		\end{equation*}
		\item The heart $\C\heart \vcentcolon= \C_{\leq 0} \cap \C_{\geq 0}$ is equivalent to the nerve of
			its homotopy category, which is an abelian category.
    \end{enumerate}
    We denote the composite $\tau_{\leq 0} \circ \tau_{\geq 0}$ by $\pi_0\colon \C \rightarrow \C\heart$.
\end{prop}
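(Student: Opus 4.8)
My approach is to regard all four assertions as \emph{upgrades}, from the triangulated homotopy category $\ho(\C)$ to the stable \cat $\C$, of the corresponding facts in the classical Beilinson--Bernstein--Deligne theory of t-structures. Since by definition the t-structure on $\C$ is a t-structure on $\ho(\C)$, and $\ho(\C)$ is triangulated by the preceding proposition, the classical theory already provides, on the level of $\ho(\C)$, the truncation functors, the orthogonality $\operatorname{Hom}_{\ho(\C)}(Y,Z)=0$ for $Y\in\C_{\geq n}$ and $Z\in\C_{\leq n-1}$, the truncation triangles, and the abelianness of the heart. The one device that makes the upgrade possible is that, $\C$ being stable, we have $\Omega\map_\C(X,Y)\simeq\map_\C(X,\Omega Y)$ with $\Omega Y = Y[-1]$, so that $\pi_k\map_\C(X,Y)\cong\operatorname{Hom}_{\ho(\C)}(X,Y[-k])$ for all $k\geq 0$. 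I expect this single point --- converting the vanishing of $\operatorname{Hom}$-groups into the contractibility of mapping spaces --- to do essentially all of the work.

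The first step I would carry out is the \categorical orthogonality: for $Y\in\C_{\geq n}$ and $Z\in\C_{\leq n-1}$ the mapping space $\map_\C(Y,Z)$ is contractible. Indeed, the t-structure is stable under shifts, so $Z[-k]\in\C_{\leq n-1}$ for every $k\geq 0$; hence $\pi_k\map_\C(Y,Z)\cong\operatorname{Hom}_{\ho(\C)}(Y,Z[-k])=0$ by the classical orthogonality, and since the space is nonempty it is contractible.

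With orthogonality in hand I would construct the adjoints by hand via the standard (co)localization criterion. Given $X\in\C$, I pick from the classical truncation triangle a morphism $X\to B$ in $\C$ with $B\in\C_{\leq n-1}$ lifting the homotopy-category map, and set $\tau_{\geq n}X\vcentcolon=\fib(X\to B)$; its image in $\ho(\C)$ is the classical truncation, so $\tau_{\geq n}X\in\C_{\geq n}$. For $Y\in\C_{\geq n}$ the fiber sequence $\tau_{\geq n}X\to X\to B$ induces a fiber sequence of spaces $\map_\C(Y,\tau_{\geq n}X)\to\map_\C(Y,X)\to\map_\C(Y,B)$ whose rightmost term is contractible by orthogonality, so $\map_\C(Y,\tau_{\geq n}X)\eqarrow\map_\C(Y,X)$. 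This exhibits $\tau_{\geq n}X\to X$ as a coreflection into $\C_{\geq n}$, and the colocalization criterion promotes these pointwise coreflections to a right adjoint $\tau_{\geq n}$, which is assertion (1); the left adjoint $\tau_{\leq n}$ of assertion (2) is produced dually as a cofiber. For the commutation (3), both composites corestrict to $\C_{\geq n}\cap\C_{\leq m}$ and the comparison transformation is an equivalence because it may be checked objectwise, where it reduces to the classical identity in $\ho(\C)$.

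Finally, for the heart (assertion (4)) with $\C\heart=\C_{\geq 0}\cap\C_{\leq 0}$: for $X,Y\in\C\heart$ and $k\geq 1$ one has $Y[-k]\in\C_{\leq -1}$, so $\pi_k\map_\C(X,Y)\cong\operatorname{Hom}_{\ho(\C)}(X,Y[-k])=0$ by orthogonality again. Thus all mapping spaces in $\C\heart$ are discrete, whence $\C\heart$ is equivalent to the nerve of its homotopy category; that this homotopy category is abelian is the classical theorem on the heart of a t-structure. The main obstacle throughout --- and really the only nonformal point --- is the passage from $\pi_0$ to the full mapping space in the orthogonality step; once that is secured, everything else follows formally from stability together with the already-available structure on $\ho(\C)$.
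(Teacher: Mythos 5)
Your proposal is correct, and the paper itself gives no proof of this statement: it is quoted with a citation to \HA{Prop.~1.2.1.5, Prop.~1.2.1.10, Rem.~1.2.1.1}. Your argument --- upgrading the classical orthogonality on $\ho(\C)$ to contractibility of mapping spaces via $\pi_k\map_\C(Y,Z)\cong\operatorname{Hom}_{\ho(\C)}(Y,Z[-k])$, producing the truncations as fibers/cofibers of lifts of the truncation triangles, and verifying the (co)localization criterion --- is essentially the same argument given in the cited reference.
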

\begin{definition}[\SAG{Constr.~C.1.1.1}]
	Let $\D$ be a pointed \cat which admits finite colimits
	and let $\Sigma\colon \D \rightarrow \D$ be the suspension functor, given on objects by
	$\Sigma X = 0 \sqcup_X 0$.
	We denote the direct limit of the tower
	\begin{equation*}
		\D \xrightarrow{\Sigma} \D \xrightarrow{\Sigma} \D \xrightarrow{\Sigma} \cdots
	\end{equation*}
	by $\SW(\D)$ and refer to it as the \emph{Spanier--Whitehead \cat of $\D$}.
	It comes with a canonical functor $\D \rightarrow \SW(\D)$.
\end{definition}
\begin{remark}
	As can be seen from the definition, the \cat $\SW(\D)$
	is generated by the essential image of $\D$ under finite suspension.
\end{remark}
\begin{prop}[\SAG{Prop.~C.1.1.7}]
	Let $\D$ be a pointed \cat which admits finite colimits. 
	The \cat $\SW(\D)$ is a stable \cat and, for every stable \cat $\C$, restriction along
	the functor $\D \rightarrow \SW(\D)$ induces
    an equivalence
    \begin{equation*}
        \Fun^\text{ex}(\SW(\D), \C) \eqarrow \Fun^\text{rex}(\D, \C) \eqdot
    \end{equation*}
    between the full subcategory of $\Fun(\SW(\D), \C)$ spanned by the exact functors
    and the full subcategory of $\Fun(\D, \C)$ spanned by the right exact functors.
\end{prop}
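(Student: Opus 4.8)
The plan is to split the statement into its two assertions---that $\SW(\D)$ is stable and that restriction along $\D \to \SW(\D)$ is the asserted equivalence---and to exploit throughout that $\SW(\D)$ is a \emph{filtered} (sequential) colimit. For stability, I would first observe that the defining tower lives not merely in $\catcat$ but in the \cat of pointed \cats admitting finite colimits, with right exact functors as morphisms: since $\Sigma X = 0 \sqcup_X 0$ is itself a finite colimit, the suspension functor preserves finite colimits (colimits commute with colimits) and is thus a right exact endofunctor of $\D$. Because finite colimits commute with filtered colimits, the forgetful functor to $\catcat$ creates sequential colimits of such \cats; hence $\SW(\D)$ is again pointed with finite colimits, the canonical functor $\D \to \SW(\D)$ is right exact, and finite colimits in $\SW(\D)$ are computed at a finite stage. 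The key point is that the suspension on $\SW(\D)$ is induced on colimits by applying $\Sigma$ at every stage, which is precisely the comparison map along the cofinal reindexing $n \mapsto n+1$ of the tower; since this reindexing does not change the colimit, $\Sigma\colon \SW(\D) \to \SW(\D)$ is an equivalence. A pointed \cat with finite colimits on which $\Sigma$ is invertible is stable---the inverse $\Omega = \Sigma^{-1}$ supplies the missing finite limits and one checks that pushout squares agree with pullback squares---see \HA{Cor.~1.4.2.27} and the surrounding discussion.

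For the universal property, fix a stable \cat $\C$. Since $\SW(\D) = \colim_n \D$ is a colimit in the \cat of pointed finitely cocomplete \cats and right exact functors, mapping out of it converts the colimit into a limit:
\[
    \Fun^\text{rex}(\SW(\D), \C) \;\eqarrow\; \lim\Bigl( \cdots \xrightarrow{-\circ\,\Sigma} \Fun^\text{rex}(\D, \C) \xrightarrow{-\circ\,\Sigma} \Fun^\text{rex}(\D, \C) \Bigr),
\]
the transition maps being precomposition with $\Sigma$. For any right exact $F\colon \D \to \C$ one has $F \circ \Sigma \simeq \Sigma_\C \circ F$, because $F$ preserves the zero object and the pushout defining suspension; thus precomposition with $\Sigma$ is naturally equivalent to postcomposition with $\Sigma_\C$. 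As $\C$ is stable, $\Sigma_\C$ is an equivalence, so every transition map in the tower is an equivalence and the limit is computed by its projection to the stage $n = 0$, namely restriction along the canonical functor $\D \to \SW(\D)$. This yields $\Fun^\text{rex}(\SW(\D), \C) \eqarrow \Fun^\text{rex}(\D, \C)$. Finally, since $\SW(\D)$ and $\C$ are both stable, a functor between them preserves finite colimits if and only if it is exact (\HA{Prop.~1.1.4.1}), so $\Fun^\text{ex}(\SW(\D), \C) = \Fun^\text{rex}(\SW(\D), \C)$ and the claimed equivalence follows.

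The main obstacle I anticipate lies in the stability claim, and within it two points demand care: that the sequential colimit is genuinely formed in the \cat of finitely cocomplete \cats (so that finite colimits persist and are detected stagewise), and that invertibility of $\Sigma$ truly upgrades ``pointed with finite colimits'' to ``stable'' rather than producing some weaker structure. Once these are secured, the universal property is a formal consequence of the colimit presentation together with the invertibility of $\Sigma_\C$, and the passage from $\Fun^\text{rex}$ to $\Fun^\text{ex}$ is automatic.
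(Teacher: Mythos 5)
This proposition is quoted verbatim from \SAG{Prop.~C.1.1.7} and the paper offers no proof of its own, so the only comparison available is with the cited source. Your argument is correct and is essentially Lurie's: form the tower in the \cat of pointed finitely cocomplete \cats and right exact functors (where the forgetful functor creates sequential colimits), use the reindexing trick to see that $\Sigma$ becomes invertible on $\SW(\D)$ so that \HA{Cor.~1.4.2.27} yields stability, and deduce the universal property by writing $\Fun^\text{rex}(\SW(\D), \C)$ as the limit of a tower of functor \cats whose transition maps are postcomposition with the invertible $\Sigma_{\C}$, with the identification $\Fun^\text{ex} = \Fun^\text{rex}$ for functors between stable \cats supplied by \HA{Prop.~1.1.4.1}.
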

\begin{definition}[\SAG{Def.~C.1.2.1, Prop.~C.1.2.2}]
	Let $\D$ be a pointed \cat admitting finite colimits. The following conditions are equivalent:
	\begin{enumerate}
		\item The canonical functor $\D \rightarrow \SW(\D)$ is fully faithful and the essential image
		is closed under extensions.
		\item The \cat $\D$ is equivalent to a full subcategory of a stable \cat $\C$
		which is closed under extensions and finite colimits.
    \end{enumerate}
    If $\D$ has these properties, we call $\D$ a \textit{prestable \cat}.
\end{definition}
\begin{example}\label{prestable_subcat_ex}
    Let $\D$ be a prestable \cat, and let $\D'$ be a full subcategory of $\D$ which is closed under extensions and finite colimits.
    Then $\D'$ is a prestable \cat and the canonical exact functor $i\colon \SW(\D') \rightarrow \SW(\D)$ is exact.
    As $i$ restricts to the fully faithful composite $\D' \hookrightarrow \D \hookrightarrow \SW(\D)$ and $\SW(\D')$ is generated by
    $\D'$ under finite suspensions, $i$ is fully faithful.
\end{example}
\begin{definition}
	An \cat is \emph{additive} if it has finite products and finite coproducts and its homotopy category is additive.
\end{definition}
\begin{definition}
    Let $\A$ be a small additive \cat. We write $\PSA$ for the full subcategory of
    the presheaf category of spaces spanned by those presheaves which preserve finite products.
	The \cat $\PSA$ is often called the \textit{nonabelian derived \cat of $\A$}.
\end{definition}
\begin{notation}
	Let $\B$ be a small additive \onecat. We define
    \begin{equation*}
        \operatorname{Mod}(\B) \vcentcolon= \Fun^\pi(\B\op, \catab) \eqdot
    \end{equation*}
\end{notation}
The following proposition is one of the key inputs of this article.
\begin{prop}[\SAG{Prop.~C.1.5.7}]\label{key_input}
    Let $\A$ be a small additive \cat.
	Restriction along the infinite loops functor
	\begin{equation*}
		\Omega^\infty\colon \catspectra^\text{cn} \rightarrow \catspaces
	\end{equation*}
	from the \cat of connective spectra to the \cat of spaces induces an equivalence of \cats
	\begin{equation*}
		\Fun^\pi(\A\op, \catspectra^\text{cn}) \eqarrow \Fun^\pi(\A\op, \catspaces)
	\end{equation*}
	between the full subcategories of the functor categories spanned by those functors which
	preserve finite products.

	As a consequence, the \cat $\PSA$ is prestable and there exists a t-structure on $\SW(\PSA)$ such that
    $\SW(\PSA)_{\geq 0}$ is the essential image of $\PSA$ in $\SW(\PSA)$.
    Under this t-structure, the heart of $\SW(\PSA)$ identifies with $\operatorname{Mod}(\ho(\A))$.
    In particular, for all $F \in \PSA$, $x \in \A$, the homotopy group $\pi_0(F(x))$ is 
    canonically isomorphic to $(\pi_0(F))(x)$.
\end{prop}
\subsection{Exact \cats}
\begin{definition}[\cite{barwick_exact_2015}]
	An exact \cat is a triple {$(\E, \E_\dagger, \E^\dagger)$} where $\E$ is an additive \cat and
	$\E_\dagger$, $\E^\dagger$ are subcategories of $\E$ such that the following conditions hold:
	\begin{enumerate}
        \item Every morphism whose domain is a zero object is in $\E_\dagger$. 
            Every morphism whose codomain is a zero object is in $\E^\dagger$.
		\item Pushouts of morphisms in $\E_\dagger$ exist and every pushout of a morphism in $\E_\dagger$ is
			in $\E_\dagger$; pullbacks of
			morphisms in $\E^\dagger$ exist and every pullback of a morphism in $\E^\dagger$ is in $\E^\dagger$.
		\item\label{item:ambigressive_square} For a square in $\E$ of the form
			\begin{equation*}
			\begin{tikzcd}
			x \arrow[r, "f"] \arrow[d, "g"] & y \arrow[d, "g'"] \\
			x' \arrow[r, "f'"] & y' \eqcomma
			\end{tikzcd}
			\end{equation*}
			the following conditions are equivalent:
			\begin{enumerate}
				\item The square is cocartesian, $f \in \E_\dagger$ and $g \in \E^\dagger$.
				\item The square is cartesian, $f' \in \E_\dagger$ and $g' \in \E^\dagger$.
			\end{enumerate}
	\end{enumerate}
    We call the morphisms in $\E_\dagger$ \textit{cofibrations} and, in diagrams, mark them
    by tails. We call the morphisms in $\E^\dagger$ \emph{fibrations} and, in diagrams, mark them by double heads.
\end{definition}
In the following, we suppress $\E_\dagger, \E^\dagger$ from the notation.
\begin{definition}
    Let $\E$ be an exact \cat. An \emph{exact sequence} is a bicartesian square in $\E$ of the form
	\begin{equation}\label{diag:exact_seq}
	\begin{tikzcd}
		x \arrow[r, tail, "i"] \bicart \arrow[d, two heads] &y \arrow[d, two heads, "p"] \\
		0 \arrow[r, tail] &z
	\end{tikzcd}
    \end{equation}
    such that $i$ is a cofibration and $p$ is a fibration.
    We sometimes write $\ses$ for such a sequence. We call $z$ the \textit{cofiber of $i$}
	and $x$ the \textit{fiber of $p$}.
	We sometimes write $\cofib(i)$ for $z$ and $\fib(p)$ for $x$.
\end{definition}
\begin{example}\label{ex:ordinary_exact_is_exact}
    Let $\E$ be an ordinary exact category in the sense of Quillen. The exact structure on $\E$ endows (the nerve of) $\E$
    with the structure of an exact \cat, and vice versa.
\end{example}
\begin{example}\label{ex:stable_is_exact}
    Let $\C$ be an additive \cat. The triple $(\C, \C, \C)$ is an exact \cat if and only if $\C$ is stable.
\end{example}
\begin{example}[cf.~{\cite[Ex.~3.5]{barwick_exact_2015}}]
	Let $\D$ be a prestable \cat and denote by $D'$ the subcategory of $\D$ spanned
	by morphisms $p$ occuring in bicartesian squares of the form \eqref{diag:exact_seq}.
	Then $(\D, \D, \D')$ is an exact \cat.
\end{example}
\section{The \cat of finite additive presheaves} \label{section:A_first_embedding}
\begin{definition}\label{def:cbge}
	Let $\E$ be a small exact \cat. We denote by $\PSf$ the smallest subcategory of $\PS$ 
	which contains $\E$ and is closed under finite colimits.
\end{definition}
\begin{prop}
	The \cat $\PSf$ admits finite colimits and, for every \cat
	$\D$ which admits finite colimits, restriction along the inclusion $\E \hookrightarrow \PSf$ induces an equivalence
	\begin{equation*}
		\Fun^\text{rex}(\PSf, \D) \eqarrow \Fun^\Sigma(\E, \D)
	\end{equation*}
	between the full subcategory of $\Fun(\PSf, \D)$ spanned
	by those functors which are right exact and 
	the full subcategory of $\Fun(\E, \D)$ spanned by those functors which
	preserve finite coproducts.
\end{prop}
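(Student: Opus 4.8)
The plan is to recognise $\PSf$ as the free completion of $\E$ under finite colimits relative to its finite coproducts, and to reduce the universal property to that of the sifted cocompletion $\PS$. I would begin with the formal consequences of the definition. The \cat $\PS$ admits all small colimits and the Yoneda embedding $\E \hookrightarrow \PS$ preserves finite coproducts (a standard property of $\Pres_\Sigma$). Since $\PSf$ is by definition the smallest full subcategory of $\PS$ containing $\E$ and closed under finite colimits, every finite diagram in $\PSf$ has its $\PS$-colimit again in $\PSf$, and that colimit is then also a colimit in $\PSf$; so finite colimits in $\PSf$ exist and agree with those computed in $\PS$, the inclusion $\PSf \hookrightarrow \PS$ is right exact, and the Yoneda embedding factors through a finite-coproduct-preserving functor $\E \hookrightarrow \PSf$. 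Restricting a right exact functor along $\E \hookrightarrow \PSf$ therefore yields a functor preserving finite coproducts, so the restriction functor $\Fun^\text{rex}(\PSf, \D) \to \Fun^\Sigma(\E, \D)$ is well defined.

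For the equivalence I would first treat a cocomplete target and then descend. If $\mathcal{T}$ admits all small colimits, the assertion $\Fun^\text{rex}(\PSf, \mathcal{T}) \eqarrow \Fun^\Sigma(\E, \mathcal{T})$ is precisely the universal property exhibiting $\PSf$ as the free finite cocompletion of $\E$ relative to its finite coproducts, which I would obtain from the general theory of free cocompletions relative to a fixed class of colimits (cf.\ \HTT{\S 5.3.6}). To reach a general $\D$, which admits only finite colimits, I would embed $\D$ into its filtered cocompletion along the fully faithful, finite-colimit-preserving functor $\D \hookrightarrow \operatorname{Ind}(\D)$, whose target is cocomplete and in which $\D$ is closed under finite colimits. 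Postcomposition with this functor produces a commuting square relating the restriction functor for $\D$ to that for $\operatorname{Ind}(\D)$: the two vertical maps are fully faithful, since postcomposition with a fully faithful functor is fully faithful on functor \cats, and the bottom horizontal map is the equivalence just established.

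Essential surjectivity of the restriction for $\D$ is then the remaining point. Given $F \in \Fun^\Sigma(\E, \D)$, I would extend the composite $\E \to \D \hookrightarrow \operatorname{Ind}(\D)$ to a right exact functor $\bar F \colon \PSf \to \operatorname{Ind}(\D)$ using the cocomplete case. The full subcategory of $\PSf$ spanned by the objects $X$ with $\bar F(X)$ in the essential image of $\D$ contains $\E$ and, as $\bar F$ is right exact and $\D$ is closed under finite colimits in $\operatorname{Ind}(\D)$, is itself closed under finite colimits; hence it is all of $\PSf$, and $\bar F$ factors through $\D$. Full faithfulness of the restriction for $\D$ then follows formally: the composite of the right vertical map with the restriction for $\D$ equals the composite of the restriction for $\operatorname{Ind}(\D)$ with the left vertical map, which is fully faithful, and cancelling the fully faithful right vertical map shows the restriction for $\D$ is fully faithful.

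The step I expect to be the main obstacle is exactly the restricted cocompleteness of $\D$. Because $\D$ carries only finite colimits, one cannot extend $F$ by a pointwise left Kan extension along $\E \hookrightarrow \PSf$, as the relevant comma-category colimits are infinite and need not exist in $\D$; nor is it clear a priori that a right exact functor is the Kan extension of its restriction. The purpose of passing to the cocomplete \cat $\operatorname{Ind}(\D)$, together with the inductive "generated under finite colimits" argument, is precisely to carry out the extension where all colimits are available and then to certify that the result already takes values in $\D$.
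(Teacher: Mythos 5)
Your argument is correct and rests on the same foundation as the paper's one-line proof, which simply observes that $\PSf$ is, by construction, exactly the \cat built in the proof of \HTT{Prop.~5.3.6.2} (taking the class of diagram shapes to be the finite simplicial sets and the distinguished diagrams to be the finite coproduct diagrams in $\E$), so the universal property is that proposition verbatim. The only divergence is that you invoke the relative-cocompletion machinery only for cocomplete targets and then reduce the general case by hand via $\D \hookrightarrow \operatorname{Ind}(\D)$; this reduction is correct but redundant, since \HTT{Prop.~5.3.6.2} already applies to any $\D$ admitting finite colimits.
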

\begin{proof}
	By taking the smallest subcategory of $\PS$ which contains $\E$ and is closed under finite colimits,
	we follow the construction of the \cat with the claimed universal property given in the proof of \HTT{Prop.~5.3.6.2}.
\end{proof}
\begin{notation}
	Let $f\colon x \rightarrow y$ be a morphism in $\E$. We denote its image in $\PSf \subseteq \PS$ under
	the Yoneda embedding by $\hat{f}\colon \hat{x} \rightarrow \hat{y}$.
	Similarly, we denote its image in $\Mod$ under the composite
	$\E \hookrightarrow \PS \xrightarrow{\pi_0} \Mod$
	by
	$\overline{f}\colon \overline{x} \rightarrow \overline{y}$.
\end{notation}
\begin{prop}\label{prop:E_is_projective}
	Let $x \in \E$ and $F \in \PS$. The homotopy group $\pi_0(\map(\hat{x}, \Sigma F))$ is trivial.
\end{prop}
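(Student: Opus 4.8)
The plan is to evaluate the mapping space via the Yoneda lemma and then to recognise that $\pi_0$ of the suspension of a connective object vanishes, transporting this through the comparison of homotopy groups provided by \Cref{key_input}.

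First I would observe that $\hat{x}$ lies in $\PS$: as $\E$ is additive, the representable presheaf $\hat{x} = \map_\E(-,x)$ sends finite coproducts to finite products and is hence product-preserving. Since $\PS$ is a full subcategory of the presheaf \cat $\Pres(\E)$, the Yoneda lemma then supplies a natural equivalence of spaces
\[
	\map_{\PS}(\hat{x}, \Sigma F) \simeq (\Sigma F)(x) \eqcomma
\]
reducing the statement to the vanishing of $\pi_0\bigl((\Sigma F)(x)\bigr)$. Because $\PS$ is prestable by \Cref{key_input}, the suspension $\Sigma$ carries $\PS = \SW(\PS)_{\geq 0}$ into $\SW(\PS)_{\geq 1} \subseteq \PS$; in particular $\Sigma F$ again lies in $\PS$, so the final assertion of \Cref{key_input} applies to it and gives a canonical isomorphism
\[
	\pi_0\bigl((\Sigma F)(x)\bigr) \cong \bigl(\pi_0(\Sigma F)\bigr)(x) \eqcomma
\]
where the right-hand $\pi_0 = \tau_{\leq 0}\tau_{\geq 0}$ is now the truncation into the heart $\Mod$.

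It then remains to see that $\pi_0(\Sigma F)$ vanishes in the heart. For this I would use that the canonical functor $\PS \to \SW(\PS)$ is right exact and hence identifies the suspension computed in $\PS$ with the shift $[1]$ in the stable \cat $\SW(\PS)$, so that $\pi_0(\Sigma F) \simeq \pi_0(F[1]) \simeq \pi_{-1}(F)$. As $F \in \PS = \SW(\PS)_{\geq 0}$ is connective, $\pi_{-1}(F) = 0$, and the claim follows. I expect the only delicate point to be keeping the two uses of $\pi_0$ aligned — the homotopy group of the evaluation space on one side and the heart-valued truncation on the other — but this alignment is exactly the content of the last sentence of \Cref{key_input}, so no genuine obstacle should arise.
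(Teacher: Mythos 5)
Your argument is correct and is essentially the paper's own proof: Yoneda reduces the claim to $\pi_0\bigl((\Sigma F)(x)\bigr)$, the last assertion of \Cref{key_input} identifies this with $\bigl(\pi_0(\Sigma F)\bigr)(x)$, and this vanishes because $F$ lies in the aisle of the t-structure on $\SW(\PS)$. You simply spell out the intermediate checks (that $\hat{x}\in\PS$ and that $\Sigma F$ is $1$-connective) that the paper leaves implicit.
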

\begin{proof}
	By the Yoneda Lemma, there is an isomorphism of abelian groups
	\begin{equation*}
		\pi_0(\map(\hat{x}, \Sigma F)) \cong \pi_0((\Sigma F)(x)) \eqdot
	\end{equation*}
	The codomain identifies with $\pi_0(\Sigma F)(x)$; this is
	a trivial group as $F$ is in the aisle of the standard t-structure on $\SW(\PS)$.
\end{proof}
\begin{prop}\label{prop:psf_prestable}
	The \cat $\PSf$ is prestable.
\end{prop}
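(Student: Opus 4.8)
The plan is to deduce prestability of $\PSf$ from \Cref{key_input} together with \Cref{prestable_subcat_ex}. By \Cref{key_input} (applied to the additive \cat $\E$), the \cat $\PS$ is prestable and $\SW(\PS)$ carries a t-structure whose connective part is the essential image of $\PS$. Since $\PSf$ is by construction a full subcategory of $\PS$ closed under finite colimits, \Cref{prestable_subcat_ex} reduces the whole statement to one point: that $\PSf$ is closed under extensions in $\PS$. Concretely, I must show that whenever $x \to y \to z$ is a cofiber sequence in the stable \cat $\SW(\PS)$ with $x, z \in \PSf$ (so that $y$ is automatically connective), the middle term $y$ again lies in $\PSf$.

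To prove this I would argue by induction on the way $z$ is assembled from representables. Since $\PS$ is additive and admits finite colimits, every pushout is a cofiber of a map into a direct sum, so $\PSf$ is generated by the representables $\hat w$ ($w \in \E$) under the two operations of forming finite coproducts and forming cofibers of maps. I therefore let $\mathcal W \subseteq \PSf$ be the full subcategory of those $z$ for which \emph{every} cofiber sequence $x \to y \to z$ with $x \in \PSf$ has $y \in \PSf$, and aim to show that $\mathcal W$ contains the representables and is closed under finite coproducts and cofibers. As $\PSf$ is the smallest such subcategory containing $\E$, this forces $\mathcal W = \PSf$ and finishes the argument.

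The base case is where \Cref{prop:E_is_projective} enters, and it is the conceptual heart of the proof. An extension $x \to y \to \hat w$ of a representable is classified by a map $\hat w \to \Sigma x$, i.e.\ by an element of $\pi_0(\map(\hat w, \Sigma x))$, which vanishes by \Cref{prop:E_is_projective} (as $x \in \PS$). Hence every such extension splits, $y \simeq x \oplus \hat w \in \PSf$, so $\hat w \in \mathcal W$. For the inductive steps I would exploit the stability of $\SW(\PS)$. Given $z = \cofib(a \to b)$ with $a, b \in \mathcal W$ and an extension $x \to y \to z$ classified by $g\colon z \to \Sigma x$, restrict $g$ along $b \to z$ to obtain an extension $x \to y_b \to b$ with $y_b \in \PSf$ (using $b \in \mathcal W$). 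Comparing the two extensions yields a map of cofiber sequences $(x \to y_b \to b) \to (x \to y \to z)$ which is the identity on $x$; its cofibers assemble into a cofiber sequence $0 \to \cofib(y_b \to y) \to \cofib(b \to z)$, and since $\cofib(b \to z) \simeq \Sigma a$ this exhibits $a \to y_b \to y$ as a cofiber sequence. Therefore $y \simeq \cofib(a \to y_b) \in \PSf$, so $z \in \mathcal W$. A parallel manipulation, restricting $g$ along the split inclusion $z_1 \to z_1 \oplus z_2$ and then using the resulting cofiber sequence $y_1 \to y \to z_2$ with $z_2 \in \mathcal W$, shows that $\mathcal W$ is closed under finite coproducts.

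I expect the main obstacle to be organizational rather than conceptual: the class $\mathcal W$ is defined through a universal quantifier over all $x \in \PSf$, and the work lies in making it visibly closed under the colimit operations by extracting the auxiliary cofiber sequences $a \to y_b \to y$ (and its coproduct analogue) from maps of cofiber sequences in $\SW(\PS)$. Once this bookkeeping is in place, the only genuine input is the projectivity of representables recorded in \Cref{prop:E_is_projective}, which is exactly what makes extensions by representables split.
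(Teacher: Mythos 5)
Your proof is correct and follows essentially the same strategy as the paper: reduce prestability to closure under extensions in $\PS$, then induct on how objects of $\PSf$ are generated from representables, with \Cref{prop:E_is_projective} splitting the base case. The only difference is bookkeeping in the inductive step (you generate $\PSf$ by coproducts and cofibers and rotate cofiber sequences, whereas the paper reduces to pushouts and commutes colimits), which does not change the substance of the argument.
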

\begin{proof}
	As, by definition, $\PSf$ is closed under finite colimits inside $\PS$,
	it suffices to prove that $\PSf$ is closed under extensions in $\PS$. Equivalently, we prove
	that the cofiber of every map $Z \rightarrow \Sigma X$
	with $Z, X \in \PSf$ lies in $\Sigma \PSf \subseteq \Sigma \PS$. 
	We show this by proving the following: For every object $X \in \PSf$ the class
	of objects $Z \in \PSf$ satisfying this property
	contains the essential image of the Yoneda embedding and
	is closed under
	finite colimits in $\PS$.
	The former condition follows directly from \Cref{prop:E_is_projective}.

	For the latter, the duals of \cite[Th.~7.3.27, Prop.~7.3.28]{cisinski_higher_2019}
	imply that it is sufficient
	to show that this collection of objects is closed under pushouts.

	Consider a pushout square $\sigma$ in $\PSf$
	\begin{equation*}
		\begin{tikzcd}
			Z \ar[r, "g"] \ar[d, "h"] & Z' \ar[d, "h'"] \\
			Z'' \ar[r, "g'"] & Y
		\end{tikzcd}
	\end{equation*}
	such that $Z$, $Z'$ and $Z''$ have this property.
	We need to show that the cofiber of every morphism $f\colon Y \rightarrow \Sigma X$ 
	is in $\Sigma \PSf$, so let such a morphism be given.
	We extend $f$ to a morphism of diagrams
	$f'\colon \sigma \rightarrow \const_{\Sigma X}$,
	where the codomain is the diagram constant in $\Sigma X$.
	Because colimits commute, the pushout of $f'$, taken in the \cat of squares in $\PS$,
	along a morphism
	with codomain zero  is a pushout diagram of the form
	\begin{equation*}
		\begin{tikzcd}
			\cofib(f \circ h' \circ g) \ar[r] \ar[d] & \cofib(f \circ h') \ar[d] \\
			\cofib(f \circ g') \ar[r] & \cofib(f) \eqdot
		\end{tikzcd}
	\end{equation*}
	As $\PSf$ is closed under finite colimits, this implies that $\cofib(f)$ is in $\PSf$,
	finishing the proof.
\end{proof}
\begin{warning2}\label{warning:PSf}
	We have the following diagram of fully faithful functors, which commutes
	up to natural equivalence:
	\begin{equation*}
	\begin{tikzcd}
		\PSf \arrow[r, hook] \arrow[d, hook] & \PS \arrow[d, hook] \\
		\SW(\PSf) \arrow[r, hook]            & \SW(\PS) \eqdot       
		\end{tikzcd}
	\end{equation*}
	While $\PS$ embeds into $\SW(\PS)$ as the aisle of a t-structure,
	$\PSf$ need not be the aisle of a t-structure on $\SW(\PSf)$,
	as $\PSf$ might not have finite limits. Furthermore, the heart $\SW(\PS)\heart$ is not contained in $\PSf$. However,
	we can still consider the homotopy groups of objects in $\PSf$
	relative to the t-structure on $\SW(\PS)$---these might just not lie in
	$\PSf$. Similarly, objects in $\PSf$ can be truncated as objects in $\PS$---but again,
	the resulting object does not have to be inside $\PSf$.
\end{warning2}
\subsection{Primitive acyclic objects}
\begin{definition}
	Let $i: x \rightarrow y$ be a morphism in $\E$ and $z$ be an object in $\E$.
	A morphism $\cofib(\hat{i}) \rightarrow z$ is called a \textit{primitive quasi-isomorphism}
	if there exists an exact sequence $\ses$ such that $f$ is the
	canonical morphism induced by the sequence. The colimit in $\PSf$ of the diagram
	\begin{equation}\label{diag:primtive_acyclic}
		\begin{tikzcd}[column sep=small, row sep=tiny]
			\hat{x}\ar[rr, "\hat{i}"] \ar{dd}\ar[rd] &&\hat{y}\ar{dd}\ar[rd, "\hat{p}"]\\
			&0\ar[crossing over]{rr}&&\hat{z}\\
			0\ar[rr] \ar[rd] &&0\\
			&0\ar[crossing over, leftarrow]{uu}
		  \end{tikzcd}
	\end{equation}
	is a \textit{primitive acyclic} object.
\end{definition}
\begin{prop}\label{primitive_acyclic_old_def}
	The colimit of a diagram in $\PSf$ of the form \eqref{diag:primtive_acyclic} is canonically
	equivalent to the cofiber of the primitive quasi-isomorphism $\cofib(\hat{i}) \rightarrow z$ in $\PSf$.
\end{prop}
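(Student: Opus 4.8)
The plan is to compute the colimit of \eqref{diag:primtive_acyclic} by cutting the indexing punctured cube into a product piece, on which the Fubini theorem for colimits applies, and a single remaining top cell. First I would fix coordinates, labelling the vertices of the cube by $\{0,1\}^3$ so that $(0,0,0)=\hat{x}$, $(1,0,0)=\hat{y}$ and $(1,1,0)=\hat{z}$, with $\hat{i}$ the direction-$1$ edge $\hat{x}\to\hat{y}$ and $\hat{p}$ the direction-$2$ edge $\hat{y}\to\hat{z}$; the remaining four vertices of \eqref{diag:primtive_acyclic} carry the zero object. In these terms \eqref{diag:primtive_acyclic} is a diagram $F$ in $\PSf$ indexed by the punctured cube $P\vcentcolon=\{0,1\}^3\setminus\{(1,1,1)\}$, and all colimits below are finite colimits in $\PSf$, which exist by \Cref{def:cbge}. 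The goal is to identify $\colim_P F$ with $\cofib(\cofib(\hat{i})\to\hat{z})$.

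Then I would remove the unique maximal vertex $d\vcentcolon=(1,1,0)$ and set $P_1\vcentcolon=P\setminus\{d\}$. The key observation is that $P_1=\{(i,j,k)\mid(i,j)\neq(1,1)\}$ is literally a product $\Lambda\times[1]$, where $\Lambda=\{(0,0),(1,0),(0,1)\}$ is the punctured square in directions $1,2$ and $[1]$ is the direction-$3$ coordinate. Since colimits commute with colimits, $\colim_{P_1}F$ is the colimit over $[1]$ of the functor sending $k\in\{0,1\}$ to $\colim_\Lambda F|_{\Lambda\times\{k\}}$. At $k=0$ the inner colimit is the pushout of $\hat{y}\xleftarrow{\hat{i}}\hat{x}\to 0$, that is $\cofib(\hat{i})$, while at $k=1$ it is the pushout of the constant-zero span, that is $0$; the functor is thus the canonical map $\cofib(\hat{i})\to 0$, whose colimit over $[1]$ is its value at the terminal object $1$. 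Hence $\colim_{P_1}F\simeq 0$.

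Next I would re-attach the vertex $d$. The set $\partial d$ of vertices strictly below $d$ is $\{(0,0,0),(1,0,0),(0,1,0)\}=\Lambda\times\{0\}\subseteq P_1$, and $\colim_{\partial d}F\simeq\cofib(\hat{i})$ exactly as above. Writing $P$ as the pushout $P_1\sqcup_{\partial d}(\partial d)^{\triangleright}$ of indexing \cats, with $d$ as the cone point, and using that $\colim$ carries such a pushout to a pushout and that $\colim_{(\partial d)^{\triangleright}}F\simeq F(d)=\hat{z}$ because $d$ is terminal there, I obtain
\[
    \colim_P F \simeq \colim_{P_1}F \sqcup_{\colim_{\partial d}F} \hat{z} \simeq 0 \sqcup_{\cofib(\hat{i})} \hat{z} \simeq \cofib\big(\cofib(\hat{i})\to\hat{z}\big)\eqcomma
\]
where the gluing map $\cofib(\hat{i})\to\hat{z}$ is the canonical one induced by $\hat{p}$, namely the primitive quasi-isomorphism. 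As this is exactly the cofiber of the induced morphism $\cofib(\hat{i})\to z$, the claim follows.

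I expect the main obstacle to be making the two index-category manipulations precise rather than the computation of the individual pieces. Concretely, one must justify the Fubini identification of $\colim_{\Lambda\times[1]}F$ with the iterated colimit, and the ``attach a top cell'' formula, which rests on the equivalence $P\simeq P_1\sqcup_{\partial d}(\partial d)^{\triangleright}$ of \cats and on the cofinality of the cone point. Both are standard consequences of the commutation of colimits with colimits together with the cofinality of terminal objects, but verifying that these decompositions of indexing posets yield correct \categorical pushouts is where the care lies; once that is in place, each inner colimit is an immediate pushout.
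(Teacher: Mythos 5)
Your proof is correct and takes essentially the same route as the paper's: the paper likewise computes the colimit of the punctured cube by a direct application of \HTT{Prop.~4.4.2.2} to a decomposition of the indexing diagram, only packaging the subdivision as a single pictured pushout of simplicial sets rather than your two-step split into a Fubini computation over $\Lambda^2_0\times\Delta^1$ followed by attaching the top cell. All of your individual identifications ($\cofib(\hat{i})$, $0$, $\hat{z}$, and the gluing map being the canonical one induced by $\hat{p}$) match what the paper's decomposition yields.
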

\begin{proof}
	This is a direct application of \HTT{Prop.~4.4.2.2} to the diagram \eqref{diag:primtive_acyclic}
	and the decomposition
	\begin{equation*}
		\begin{tikzcd}[
			column sep=1.0em,
			row sep=0.6em,
			nodes=overlay,
			transform shape, nodes={scale=0.7}
			]
			\bullet \ar[rrrr] \ar[rrdd] &&&&\bullet & &&&& & \bullet \ar[rrrr] \ar[rrdd] &&&& \bullet \ar[rrdd]\\
			&&&&&\ar[rrrr, hookrightarrow] &&&& \phantom{.} \\
			&& \bullet &&&& & &&&& & \bullet \ar[rrrr] &&&&\circ \\
			&& \ar[dddd, hookrightarrow] \\ \\ \\ \\ && \phantom{.} \\
			\bullet \ar[rrrr] \ar[rrdd] \ar[dddd] &&&&\bullet \ar[dddd] \\ \\
			&& \bullet  \\ \\
			\circ \ar[rrrr] \ar[rrdd] &&&&\circ \\ \\
			&& \circ \ar[uuuu, crossing over, leftarrow]
		\end{tikzcd}
	\end{equation*}
	of the cube.
\end{proof}
\begin{definition}
	An object $M \in \Mod$ is \textit{effaceable} if it is the cokernel of a morphism
	$\overline{y} \xrightarrow{\overline{p}} \overline{z}$, where
	$p\colon y \xrightarrowdbl{} z$ is a fibration in $\E$.
	We denote the full subcategory of $\Mod$ spanned by the effaceable functors by $\Eff$.
\end{definition}
The next proposition
ensures we can do a lot of the work in functor categories between ordinary categories.
\begin{prop}\label{remark:primitive_acyclic_effaceable}
	The primitive acyclic objects
	constitute the essential image of the composition
	$\Eff \hookrightarrow \Mod \simeq \SW(\PS)\heart \hookrightarrow \PSift$.
	In particular, this essential image lies in $\PSf$.
\end{prop}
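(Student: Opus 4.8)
\textit{Strategy.} The plan is to match the two classes by showing that a single exact sequence simultaneously produces a primitive acyclic object and an effaceable module, and that these two coincide inside $\PS$. Concretely, I would fix an exact sequence $\ses$ and prove that the associated primitive acyclic object $P$ is discrete, lies in $\SW(\PS)\heart \simeq \Mod$, and is canonically isomorphic to the effaceable module $\coker(\overline p)$. For the reverse inclusion I would note that every fibration $p\colon y \twoheadrightarrow z$ extends to an exact sequence with the same $p$: pulling $p$ back along $0 \to z$ and invoking the third axiom in the definition of an exact \cat shows that $\fib(p) \rightarrowtail y \twoheadrightarrow z$ is exact, so every effaceable module is realized. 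Observe that $P$ lies in $\PSf$ for free, being a finite colimit of a diagram in $\PSf$; this is exactly the content of the final ``in particular'' clause.

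\textit{Rewriting $P$.} For the core computation I would first reinterpret $P$. By \Cref{primitive_acyclic_old_def} we have $P \simeq \cofib(\cofib(\hat i) \to \hat z)$, which I recognize as the total cofiber, formed in $\SW(\PS)$, of the Yoneda image of the defining bicartesian square \eqref{diag:exact_seq}. Applying the identity $\cofib(f) \simeq \Sigma \fib(f)$ along both the horizontal and the vertical directions of the square exhibits the total cofiber as the double suspension of the total fiber, so
\[
    P \simeq \Sigma^2 \fib\bigl(\hat x \to \fib(\hat p)\bigr)\eqcomma
\]
where $\fib(\hat p)$ denotes the fiber of $\hat p\colon \hat y \to \hat z$ taken in the stable \cat $\SW(\PS)$.

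\textit{The crux.} The decisive step is to identify the map $\hat x \to \fib(\hat p)$. Here I would use that the exact sequence is cartesian in $\E$, so that $x \simeq y \times_z 0$, together with the fact that the Yoneda embedding $\E \to \PS$ preserves limits. Writing $G \vcentcolon= \fib(\hat p)$ for the fiber in $\SW(\PS)$, the key observation is that the fiber computed in $\PS$ differs from $G$ by a connective cover: since $\PS$ is the aisle $\SW(\PS)_{\geq 0}$ by \Cref{key_input}, and the inclusion of the aisle is left adjoint to $\tau_{\geq 0}$, limits in $\PS$ are computed by applying $\tau_{\geq 0}$ to limits in $\SW(\PS)$. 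Hence $\hat x \simeq \tau_{\geq 0} G$ and the map $\hat x \to G$ above is the connective cover. I expect this connective-cover identity---precisely the point at which the pullback axiom of the exact structure is used, and the place where the argument genuinely departs from the $1$-categorical case---to be the main obstacle, both to formulate cleanly and to match up with the relevant comparison map.

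\textit{Conclusion.} Granting this, the computation finishes at once. Since $\hat y$ and $\hat z$ are connective, the long exact sequence of $G = \fib(\hat p)$ shows that $G$ is concentrated in degrees $\geq -1$, with $\pi_{-1} G \cong \coker(\overline p)$ by \Cref{key_input}. Therefore $\fib(\hat x \to G) = \fib(\tau_{\geq 0} G \to G) \simeq \Omega\,\tau_{\leq -1} G$, and $\tau_{\leq -1} G \simeq \Sigma^{-1}\coker(\overline p)$ because $G$ has no homotopy below degree $-1$. Substituting, $P \simeq \Sigma^2 \Omega\, \Sigma^{-1}\coker(\overline p) \simeq \coker(\overline p)$, which is discrete and hence lies in the heart $\Mod$. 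This shows that every primitive acyclic object is the image of the effaceable module $\coker(\overline p)$; combined with the converse from the first paragraph, the primitive acyclic objects are exactly the essential image of $\Eff$, and this image lies in $\PSf$.
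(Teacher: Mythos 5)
Your argument is correct, and it reaches the identification $P \simeq \coker(\overline{p})$ by a genuinely different route than the paper. You recognize the primitive acyclic object as the total cofiber in $\SW(\PS)$ of the Yoneda image of the defining bicartesian square, pass to the total fiber via $\operatorname{tcofib} \simeq \Sigma^2 \operatorname{tfib}$, and then use two structural facts: the Yoneda embedding preserves the cartesian square, and limits in the coreflective aisle $\PS \simeq \SW(\PS)_{\geq 0}$ are computed by applying $\tau_{\geq 0}$ to limits in $\SW(\PS)$. This exhibits $\hat{x} \rightarrow \fib(\hat{p})$ as a connective cover, and the rest is a two-line t-structure computation. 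The paper instead argues pointwise: for each $t \in \E$ it compares the long exact sequence of $\map(\hat{t}, -)$ applied to the cofiber sequence $\hat{x} \rightarrow \hat{y} \rightarrow \cofib(\hat{i})$ with that of $\map(t, -)$ applied to the fiber sequence in $\E$, kills the obstructing $\pi_{-1}$ term using \Cref{prop:E_is_projective}, and extracts the needed statements about $\pi_*(\cofib(\hat{i})) \rightarrow \pi_*(\hat{z})$ from the Four and Five Lemmas before running the long exact sequence of $A$. Both proofs consume the same inputs (connectivity of representables and the fact that Yoneda sends the exact sequence to a pointwise fiber sequence), but yours packages them through the t-structure and avoids the diagram chases, at the cost of one identification you should make explicit in a final write-up: that under $\hat{x} \simeq \tau_{\geq 0}\fib(\hat{p})$ the comparison map into the total fiber agrees with the truncation counit (both are induced by the same cone over $\hat{y} \rightarrow \hat{z} \leftarrow 0$, so this is immediate, but it is the hinge of the argument). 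You also spell out the converse inclusion, that every fibration extends to an exact sequence via the pullback and ambigressivity axioms, which the paper leaves implicit; that is a worthwhile addition.
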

\begin{proof}
	Let $A \in \PSf$ be a primitive acyclic object corresponding to an exact sequence
	$\ses$. We claim that $A$ is the effaceable object corresponding to $\overline{p}$.

	Let $Z$ be the cofiber of $\hat{i}$ in $\PSf$. We show that the induced map
	\begin{equation*}
		\pi_n(Z) \rightarrow \pi_n(\hat{z})
	\end{equation*}
	is an isomorphism for $n > 0$, and that in the sequence
	\begin{equation*}
		\begin{tikzcd}
			\bar{x} \arrow[r, "\bar{i}"] & \bar{y} \arrow[r, "f"] & \pi_0(Z) \arrow[r, "g"] & \overline{z} \eqcomma
		\end{tikzcd}
	\end{equation*}
	the map $f$ is an epimorphism and the map $g$ is a monomorphism. The statement then follows
	from the long exact sequence
	\begin{equation*}
		\dots \rightarrow \pi_{n+1}(A) \rightarrow \pi_n(Z) \rightarrow \pi_n(\hat{z})
		\rightarrow \pi_n(A) \rightarrow \pi_{n-1}(Z) \rightarrow \cdots \eqdot
	\end{equation*}
	To this end, note that all of these claims can be checked pointwise. Fix an
	object $t \in \E$.
	There is a commutative diagram 
	where the rows are the Serre long exact sequences
	\begin{equation*}
		\begin{tikzcd}[column sep=small]
		\dots \arrow[r] \arrow[d] & {\pi_0(\map(\hat{t}, \hat{x}))} \arrow[d, equal] \arrow[r] & {\pi_0(\map(\hat{t}, \hat{y}))} \arrow[d, equal] \arrow[r] & {\pi_0(\map(\hat{t}, Z)} \arrow[r] \arrow[d] & \pi_{-1}(\map(\hat{t}, \hat{x})) \arrow[d] \\
		\dots \arrow[r]           & {\pi_0(\map(t, x))} \arrow[r]           	   & {\pi_0(\map(t, y))} \arrow[r]                  & {\pi_0(\map(t, z))} \arrow[r]                           & C          
		\end{tikzcd}
	\end{equation*}
	whose bottom row is induced by the fiber sequence $\ses$ in $\E$ and whose top row is
	induced by the fiber sequence $\hat{x} \rightarrow \hat{y} \rightarrow Z$
	in $\SW(\PS)$.
	\Cref{prop:E_is_projective} implies that $\pi_{-1}(\map(\hat{t}, \hat{x})) = 0$.
	The claim follows from the Four Lemma and the Five Lemma.
\end{proof}
In the next two lemmas, we prove that $\Eff$ is \textit{weakly Serre} as a subcategory
of $\Mod$.
\begin{lemma}\label{prop:eff_closed_ext}
	The full subcategory of effaceable functors is closed under extensions in $\Mod$.
\end{lemma}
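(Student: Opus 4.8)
The plan is to carry out the entire argument inside the abelian category $\Mod$ (abelian by \Cref{key_input}) and to turn a given extension into an explicit presentation by a single fibration. Two facts drive the proof. First, every representable $\overline{z}$ with $z \in \E$ is a projective object of $\Mod$: by the additive Yoneda lemma together with \Cref{key_input} one has $\operatorname{Hom}_{\Mod}(\overline{z}, N) \cong N(z)$ naturally in $N$, and evaluation at $z$ is exact. Second, for $a, b \in \E$ the same identification gives $\operatorname{Hom}_{\Mod}(\overline{a}, \overline{b}) \cong \pi_0(\map_\E(a,b))$, so every morphism $\overline{a} \to \overline{b}$ of $\Mod$ is realized by an honest morphism $a \to b$ of $\E$.

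So let $0 \to M' \xrightarrow{\alpha} M \xrightarrow{\beta} M'' \to 0$ be exact in $\Mod$ with $M' = \coker(\overline{p'})$ and $M'' = \coker(\overline{p''})$ for fibrations $p'\colon y' \twoheadrightarrow z'$ and $p''\colon y'' \twoheadrightarrow z''$, and write $q'\colon \overline{z'} \twoheadrightarrow M'$, $q''\colon \overline{z''} \twoheadrightarrow M''$ for the induced surjections. Using projectivity of $\overline{z''}$ I would lift $q''$ along $\beta$ to $\tilde q\colon \overline{z''} \to M$. Since $\beta \tilde q \overline{p''} = q'' \overline{p''} = 0$, the composite $\tilde q\, \overline{p''}$ factors through $\ker\beta = \operatorname{im}\alpha$, yielding $r\colon \overline{y''} \to M'$ with $\alpha r = \tilde q\, \overline{p''}$; projectivity of $\overline{y''}$ then lifts $r$ along $q'$ to some $\tilde r\colon \overline{y''} \to \overline{z'}$, and by the second fact above $\tilde r = \overline{\rho}$ for a genuine morphism $\rho\colon y'' \to z'$ in $\E$. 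The pair $(\alpha q', \tilde q)$ assembles to a surjection $\phi\colon \overline{z'} \oplus \overline{z''} \to M$, and a short diagram chase identifies $\ker\phi$ with the image of
\[
\psi = \begin{pmatrix} \overline{p'} & -\overline{\rho} \\ 0 & \overline{p''} \end{pmatrix}\colon \overline{y'} \oplus \overline{y''} \longrightarrow \overline{z'} \oplus \overline{z''} \eqcomma
\]
so that $M \cong \coker(\psi)$.

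It remains to realize $\psi$ as $\overline{f}$ for a fibration $f$ in $\E$. Since $\ho(\E) \to \Mod$ is additive and sends $z' \oplus z''$ to $\overline{z'} \oplus \overline{z''}$, the matrix $\psi$ is the image of $f = \begin{pmatrix} p' & -\rho \\ 0 & p'' \end{pmatrix}\colon y' \oplus y'' \to z' \oplus z''$. I would then exhibit $f$ as the composite
\[
y' \oplus y'' \xrightarrow{p' \oplus \id} z' \oplus y'' \xrightarrow{\;g\;} z' \oplus y'' \xrightarrow{\id \oplus p''} z' \oplus z'' \eqcomma
\]
where $g = \begin{pmatrix} \id & -\rho \\ 0 & \id \end{pmatrix}$ is an equivalence. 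As direct sums of fibrations are fibrations, equivalences are fibrations, and fibrations are closed under composition, $f$ is a fibration, whence $M \cong \coker(\overline{f})$ lies in $\Eff$.

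The diagram chase identifying $\ker\phi$ with $\operatorname{im}\psi$ is routine. The conceptual heart of the argument — and the step I expect to require the most care — is the passage from the \emph{a priori} purely homological off-diagonal term $\tilde r$ to an actual morphism $\rho$ of $\E$, together with the observation that the resulting upper-triangular matrix $f$ is again a fibration. This is precisely where the identification $\operatorname{Hom}_{\Mod}(\overline{a}, \overline{b}) \cong \pi_0(\map_\E(a,b))$ and the stability of fibrations under the triangular factorization are essential.
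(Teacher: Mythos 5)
Your proof is correct and follows essentially the same route as the paper: a Horseshoe-Lemma construction exploiting projectivity of the representables in $\Mod$ to present the middle term as the cokernel of a triangular matrix of fibrations, followed by the observation that such a triangular matrix is itself a fibration. The only difference is that you unpack in-line what the paper delegates to the Horseshoe Lemma and to \Cref{prop:diagram_lemma_1} (your three-fold factorization through an equivalence is exactly the proof of that appendix lemma, up to transposition).
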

\begin{proof}
	Let $K \rightarrow L \rightarrow M$ be a short exact sequence in $\Mod$ such that $K$ and $M$
	lie in $\Eff$. By definition, we can extend this to a diagram with exact columns
	\begin{equation*}
	\begin{tikzcd}
	&\overline{y_1} \arrow[d, "\overline{p_1}"] & & \overline{y_2} \arrow[d, "\overline{p_2}"] \\
	&\overline{z_1} \arrow[d, "c_1"] & & \overline{z_2} \arrow[d, "c_2"] \\
	0 \arrow[r] & K \arrow[r] & L \arrow[r] & M \arrow[r]& 0 \eqcomma 
	\end{tikzcd}
	\end{equation*}
	where
	$p_1$, $p_2$ are fibrations in $\E$
	and $c_1$, $c_3$ are epimorphisms in $\Mod$. Since objects in the essential image of the Yoneda
	embedding are projective in $\Mod$, the Horseshoe Lemma yields
	a commutative diagram with exact columns
	\begingroup 
	\setlength\arraycolsep{1.5pt}
		\begin{equation*}
		\begin{tikzcd}
			&\overline{y_1} \arrow[d, "\overline{p_1}"] \arrow[r]& \overline{y_1} \oplus \overline{y_2} \arrow[r] \arrow[d, "\begin{pmatrix}
				\overline{p_1} & 0\\
				\overline{f} & -\overline{p_2}
			\end{pmatrix}", dashrightarrow]& \overline{y_2} \arrow[d, "\overline{p_2}"] \\[2em]
			&\overline{z_1} \arrow[d, "c_1"] \arrow[r ]& \overline{z_1} \oplus \overline{z_2} \arrow[r] \arrow[d, "c_3"] & \overline{z_2} \arrow[d, "c_2"] \\
			0 \arrow[r] & K \arrow[r] & L \arrow[r] & M \arrow[r]& 0 \eqcomma
		\end{tikzcd} 
	\end{equation*}
	\endgroup
	where $c_3$ is an epimorphism. 
	\Cref{prop:diagram_lemma_1} guarantees that the morphism indicated by a dashed arrow is a fibration in $\E$.
	Hence $L$ is effaceable.
\end{proof}
\begin{lemma}\label{prop:eff_closed_kernel_cokern}
	The full subcategory of effaceable functors is closed under kernels and cokernels in $\Mod$.
\end{lemma}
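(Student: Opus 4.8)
The plan is to handle cokernels and kernels separately, in both cases reducing to the defining presentation of an effaceable object as $\coker(\overline p)$ for a fibration $p$ in $\E$. I will use two standing facts about the abelian category $\Mod$. First, the representable functors $\overline z$ are projective, as already observed in the proof of \Cref{prop:eff_closed_ext}, and $\operatorname{Hom}_{\Mod}(\overline z, \overline v) \cong \ho(\E)(z,v)$ by the additive Yoneda Lemma; this lets me lift maps and realize them by morphisms of $\E$. Second, for every exact sequence $\ses$ the sequence $\overline x \xrightarrow{\overline i} \overline y \xrightarrow{\overline p} \overline z$ is exact at $\overline y$, i.e.\ $\operatorname{im}(\overline i) = \ker(\overline p)$: this is implicit in the proof of \Cref{remark:primitive_acyclic_effaceable}, since the monomorphism denoted $g$ there exhibits $\overline y / \operatorname{im}(\overline i) \cong \operatorname{im}(\overline p)$.

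For cokernels, let $u \colon M \to N$ with $M = \coker(\overline p)$ and $N = \coker(\overline q)$ for fibrations $p\colon y \to z$ and $q\colon w \to v$, and write $\rho\colon \overline z \to M$, $\sigma\colon \overline v \to N$ for the quotient maps. Since $\overline z$ is projective, I would lift $u\rho$ along $\sigma$ to a map $\overline z \to \overline v$, which by Yoneda is $\overline b$ for some $b\colon z \to v$ in $\E$; the relation $\sigma \overline b = u\rho$ shows that $u$ is the map induced by $\overline b$ on cokernels, so that
\[
    \coker(u) = \overline v / (\operatorname{im}(\overline b) + \operatorname{im}(\overline q)) = \coker\bigl(\overline{(b,q)} \colon \overline z \oplus \overline w \to \overline v\bigr).
\]
Now $(b,q)\colon z \oplus w \to v$ factors as $z \oplus w \xrightarrow{\id_z \oplus q} z \oplus v \xrightarrow{(b, \id_v)} v$, where the first map is the direct sum of an equivalence and the fibration $q$ and the second is a split epimorphism; as fibrations are closed under composition, $(b,q)$ is a fibration, and $\coker(u)$ is effaceable.

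Kernels are the substantial case. Given $u$ as above, I would first promote it to a morphism of presentations: the lift $b$ satisfies $\overline b(\operatorname{im}\overline p) \subseteq \operatorname{im}\overline q$, so $\overline b\,\overline p$ factors through the epimorphism $\overline w \to \operatorname{im}\overline q$, and projectivity of $\overline y$ yields $a\colon y \to w$ with $\overline q\,\overline a = \overline b\,\overline p$, i.e.\ $qa = bp$ in $\ho(\E)$. Realizing $b$ in $\E$, I would form the pullback $P \vcentcolon= z \times_v w$, which exists because $q$ is a fibration; the projection $\pi_z\colon P \to z$ is then a fibration with fiber $x_q \vcentcolon= \fib(q)$, giving an exact sequence $x_q \xrightarrow{j} P \xrightarrow{\pi_z} z$. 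Since $qa$ and $bp$ agree in $\pi_0\map(y,v) = \ho(\E)(y,v)$, there exists a homotopy between them, and choosing one upgrades the pair $(p,a)$ to an honest map $\alpha\colon y \to P$ with $\pi_z \alpha = p$.

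Finally I would compute $\ker(u)$ and present it as a cokernel of a fibration. Unwinding the pullback at the level of homotopy classes shows $\operatorname{im}(\overline{\pi_z}) = \overline b^{-1}(\operatorname{im}\overline q)$, since a map $t \to z$ lifts to $P$ exactly when its composite with $b$ factors through $q$ up to homotopy; combined with $\operatorname{im}(\overline j) = \ker(\overline{\pi_z})$ (exactness at the middle term) and $\pi_z \alpha = p$, a diagram chase in $\Mod$ gives
\[
    \ker(u) = \overline b^{-1}(\operatorname{im}\overline q)/\operatorname{im}\overline p = \operatorname{im}(\overline{\pi_z})/\operatorname{im}\overline p = \coker\bigl((\overline\alpha, \overline j)\colon \overline y \oplus \overline{x_q} \to \overline P\bigr).
\]
It remains to show that $(\alpha, j)\colon y \oplus x_q \to P$ is a fibration, and this is the crux. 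I would view it as a morphism over $z$ from the fibration $(p,0)\colon y \oplus x_q \to z$ to the fibration $\pi_z$; on fibers it induces the map $x \oplus x_q \to x_q$ whose $x_q$-component is the identity (coming from $j$), hence a split epimorphism, in particular a fibration. Invoking the fibration criterion of \Cref{prop:diagram_lemma_1} — a morphism of fibrations over a common base that is a fibration on fibers is itself a fibration — I conclude that $(\alpha,j)$ is a fibration, so $\ker(u)$ is effaceable. The main obstacle is exactly this last identification: the pullback $P$ is forced on us by the exact structure, and one must simultaneously verify that $\ker(u)$ is the cokernel of $\overline{(\alpha,j)}$, using the two exactness inputs, and that $(\alpha,j)$ is admissible, for which the fibre-wise criterion is essential. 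The cokernel case, by contrast, is essentially formal once the lift $b$ is in place.
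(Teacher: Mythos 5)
Your overall strategy coincides with the paper's: both arguments present the cokernel as the cokernel of a fibration of the form $(b,q)\colon z\oplus w\to v$ (the paper's $(c\ \ p_2)\colon z_1\oplus y_2\to z_2$), and both present the kernel as the cokernel of a map $\overline y\oplus\overline{x_q}\to\overline P$ into the pullback $P=z\times_v w$ (the paper's $d$), with the same two exactness inputs from the Serre sequences. The cokernel half of your argument is correct as written, and your identifications $\coker(u)=\coker(\overline b,\overline q)$ and $\ker(u)=\coker(\overline\alpha,\overline j)$ agree with the paper's diagram chases.

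The gap sits exactly where you place the crux. You justify that $(\alpha,j)\colon y\oplus x_q\to P$ is a fibration by invoking ``the fibration criterion of \Cref{prop:diagram_lemma_1} --- a morphism of fibrations over a common base that is a fibration on fibers is itself a fibration.'' \Cref{prop:diagram_lemma_1} does not say this: it only asserts that a block-triangular matrix assembled from two fibrations and an arbitrary map is a fibration, which is an elementary statement proved by an explicit factorization. The criterion you actually need is a form of the short five lemma for fibrations; it is true, but it is a substantive diagram lemma in an exact \cat that is proved neither in the paper nor by you, so as it stands the admissibility of $(\alpha,j)$ is unjustified. The paper closes precisely this point by a different route: using \Cref{prop:extend_exact_diagram} and \Cref{prop:big_diagram_lemma} it exhibits the square with horizontal edges $x\to x_q$ and $\alpha\colon y\to P$ and vertical edges the cofibrations $x\to y$ and $j\colon x_q\to P$ as an ambigressive pushout, and then applies \cite[Lem.~4.7]{barwick_exact_2015}, whose Mayer--Vietoris sequence yields an exact sequence $x\to y\oplus x_q\to P$ and hence the fibration $(\alpha,j)$ directly. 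To repair your proof you must either prove your fiberwise criterion (which essentially amounts to reproducing this ambigressive-square argument) or replace it by the appeal to Barwick's lemma.
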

\begin{proof}
	Let $g\colon L \rightarrow M$ be morphism in $\Eff$. We denote the kernel and the cokernel in $\Mod$ by 
	$f\colon K \rightarrow L$ and $h\colon M \rightarrow N$, respectively. 
	Again, since the essential image of $\ho(\E)$ in $\Mod$ consists of projective objects,
	we can construct a commutative diagram with exact columns
	\begin{equation}\label{diag:closed_under_kernels_small}
		\begin{tikzcd}
			&                  & \overline{y_1} \arrow[d, "\overline{p_1}"] \arrow[r, "\overline{b}"] & \overline{y_2} \arrow[d, "\overline{p_2}"] &             &   \\
			&                  & \overline{z_1} \arrow[d, "c_2"] \arrow[r, "\overline{c}"]            & \overline{z_2} \arrow[d, "c_3"]            &             &   \\
		0 \arrow[r] & K \arrow[r, "f"] & L \arrow[r, "g"]                                     & M \arrow[r, "h"]                           & N \arrow[r] & 0
		\end{tikzcd}
	\end{equation}
	where $p_1$ and $p_2$ are fibrations in $\E$,
	and $c_2$ and $c_3$ are epimorphisms in $\Mod$.
	Here, projectiveness of $\overline{z_2}$ and the fact that $c_3$ is an epimorphism means that $g \circ c_2$ factors
	through $c_3$ as $\overline{c}$.
	The composition $\overline{c} \circ \overline{p_1}$ is factors through the kernel of $c_3$,
	so by exactness of the right column and projectiveness of $\overline{y_2}$ it factors through $\overline{p_2}$ as $\overline{b}$.
	The commutative square
	\begin{equation*}
		\begin{tikzcd}
			y_1 \arrow[r, "b"] \arrow[d, two heads, "p_1"] &y_2 \arrow[d, two heads, "p_2"] \\
			z_1 \arrow[r, "c"] &z_2
		\end{tikzcd}
	\end{equation*}
	in $\ho(\E)$ can be lifted to a commutative square in $\E$.
	By Proposition \ref{prop:extend_exact_diagram}, this square can be completed further to a commutative diagram
	\begin{equation*}
	\begin{tikzcd}
		x_1 \ar[r, "a"] \ar[d, tail, "i_1"] \bicart &x_2 \ar[d, tail, "i_3"] \ar[r, equal] &x_2 \ar[d, tail, "i_2"]\\
		y_1 \ar[r, "b_1"] \ar[d, two heads, "p_1"] &d \ar[r, "b_2"] \ar[d, two heads, "p_3"] \bicart  &y_2 \ar[d, two heads, , "p_2"] \\
		z_1 \ar[r, equal] &z_1 \ar[r, "c"] &z_2
	\end{tikzcd}
	\end{equation*}
	where all columns are exact sequences and the marked squares are bicartesian.
	The commutativity of the bottom right square implies that the composition
	$c_2 \circ \overline{p_3}$ in $\Mod$ factors through $f$, say as $f \circ c_1$.
	Hence we can extend the diagram \eqref{diag:closed_under_kernels_small} to a larger diagram
	\begingroup 
	\setlength\arraycolsep{1.7pt}
	\begin{equation}\label{diag:closed_under_cokernels_large}
	\begin{tikzcd}[column sep=large]
	& \overline{y_1} \oplus \overline{x_2} \arrow[d, "\begin{pmatrix}
		\overline{b_1} & \overline{i_3}
	\end{pmatrix}"'] \arrow[r, "\begin{pmatrix}
		\id & 0
	\end{pmatrix}"] & \overline{y_1} \arrow[d, "\overline{p_1}"] \arrow[r, "\overline{b}"] & \overline{y_2} \arrow[d, "\overline{p_2}"] \arrow[r, "\begin{pmatrix}
		0 \\
		\id
	\end{pmatrix}"] & \overline{z_1} \oplus \overline{y_2} \arrow[d, "\begin{pmatrix}
		\overline{c} & \overline{p_2}
	\end{pmatrix}"] \\
	&\overline{d} \arrow[r, "\overline{p_3}"] \arrow[d, "c_1"] & \overline{z_1} \arrow[d, "c_2"] \arrow[r, "\overline{c}"]            & \overline{z_2} \arrow[d, "c_3"] \ar[r, equal]            & \overline{z_2}\arrow[d, "h \circ c_3"] \\
	0 \ar[r] & K \arrow[r, "f"] & L \arrow[r, "g"] & M \arrow[r, "h"]  & N \ar[r] & 0\eqdot                           
	\end{tikzcd} 
	\end{equation}
	\endgroup
	Exactness of the right column is immediate and $h \circ c_3$ is an epimorphism as it is the composition of two epimorphisms.
	Note that for every exact sequence
	$p \xrightarrowtail{} q \xrightarrowdbl{} r$ in $\E$, the corresponding Serre sequence implies that the sequence
	\begin{equation*}
		\overline{p} \rightarrow \overline{q} \rightarrow \overline{r}
	\end{equation*}
	is exact at $\overline{q}$. 
	Applying this observation to the exact sequence $x_2 \xrightarrowtail{i_3} d \xrightarrowdbl{p_3} z_1$,
	a short diagram chase shows that the sequence
	\begin{equation*}
		\begin{tikzcd}[column sep=small]
			\overline{y_1} \oplus \overline{x_2} \arrow[rrrr, "\begin{pmatrix}
				\overline{b_1} & \overline{i_3}
			\end{pmatrix}"] &&&&\overline{d} \ar[r] & \coker(\overline{p_1})
		\end{tikzcd}
	\end{equation*}
	is exact at $\overline{d}$, which shows that the left column in \eqref{diag:closed_under_cokernels_large} is exact.
	Note that \cite[Lem.~4.7]{barwick_exact_2015} implies that there
	exists an exact sequence in $\E$ of the form
	\begin{equation*}
		\begin{tikzcd}[column sep=small]
			d \ar[rrr, "\begin{pmatrix}
				b_2 \\
				-p_3
			\end{pmatrix}", tail] &&& z_1 \oplus y_2 \ar[rrrr, "\begin{pmatrix}
				c & p_2
			\end{pmatrix}", two heads] &&&&z_2 \eqdot
		\end{tikzcd}
	\end{equation*}
	Again, a short diagram chase shows that
	the existence of the above sequence implies that the sequence
	\begin{equation*}
		\begin{tikzcd}[column sep=small]
			\overline{d} \ar[rr, "p_3"] && \overline{z_1} \ar[r] & \coker(\overline{p_2})
		\end{tikzcd}
	\end{equation*}
	is exact at $\overline{z_1}$, which shows that $c_1$ is an epimorphism in $\Mod$.

	By \cite[Lem.~4.7]{barwick_exact_2015}, the morphisms $\begin{pmatrix}
		b_1 & i_3
	\end{pmatrix}$ and $\begin{pmatrix}
		c & p_2
	\end{pmatrix}$
	are fibrations in $\E$. Hence the left and the right column show that $K$ and $N$ are effaceable, respectively.
\end{proof}
The following corollary is a standard result for weakly Serre
subcategories.
\begin{corollary}\label{cor:two_out_of_three_primitive_acyclics}
	Let
	\begin{equation*}
		M_1 \rightarrow M_2 \rightarrow M_3 \rightarrow M_4 \rightarrow M_5
	\end{equation*}
	be an exact sequence in $\Mod$ such that all objects but $M_3$ are effaceable.
	Then $M_3$ is effaceable as well.
\end{corollary}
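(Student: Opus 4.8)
The plan is to read off from \Cref{prop:eff_closed_ext} and \Cref{prop:eff_closed_kernel_cokern} that $\Eff$ is a \emph{weak Serre} subcategory of the abelian category $\Mod$ (closed under kernels, cokernels and extensions), and then to run the standard argument for such subcategories: chop the five-term exact sequence into short exact sequences whose outer terms can be recognized as effaceable, and conclude by closure under extensions. Write the maps as $M_1 \xrightarrow{a} M_2 \xrightarrow{b} M_3 \xrightarrow{c} M_4 \xrightarrow{d} M_5$.

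First I would treat the left end. Since $M_1, M_2 \in \Eff$ and $a$ is a morphism in $\Eff$, closure under cokernels gives $\coker(a) \in \Eff$. Exactness at $M_2$ identifies $\operatorname{im}(a) = \ker(b)$, so in the abelian category $\Mod$ we get $\coker(a) = M_2/\operatorname{im}(a) = M_2/\ker(b) \cong \operatorname{im}(b)$, whence $\operatorname{im}(b) \in \Eff$. Exactness at $M_3$ then identifies $\operatorname{im}(b) = \ker(c)$, so $\ker(c) \in \Eff$. Dually, for the right end, $M_4, M_5 \in \Eff$ together with closure under kernels give $\ker(d) \in \Eff$, and exactness at $M_4$ identifies $\ker(d) = \operatorname{im}(c)$, so $\operatorname{im}(c) \in \Eff$. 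The canonical epi-mono factorization of $c$ now produces a short exact sequence $0 \to \ker(c) \to M_3 \to \operatorname{im}(c) \to 0$ with both outer terms effaceable, and closure under extensions (\Cref{prop:eff_closed_ext}) forces $M_3 \in \Eff$.

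I do not expect a genuine obstacle here: once the two closure lemmas are available, the argument is pure bookkeeping inside $\Mod$. The only points requiring care are making sure that every morphism to which I apply a closure statement really lies in $\Eff$, i.e.\ that both its source and its target are effaceable (which is exactly what the hypothesis on $M_1, M_2, M_4, M_5$ provides), and correctly matching images with kernels via exactness at the intermediate spots. If one prefers to avoid even naming the isomorphism $\coker(a) \cong \operatorname{im}(b)$, one can instead split off the short exact sequences $0 \to \ker(b) \to M_2 \to \operatorname{im}(b) \to 0$ and their analogues directly and feed them into \Cref{prop:eff_closed_kernel_cokern}, but the route above seems the most economical.
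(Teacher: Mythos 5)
Your proposal is correct and follows essentially the same route as the paper: the paper likewise splits the five-term sequence into the short exact sequence $N_1 \rightarrow M_3 \rightarrow N_2$ with $N_1 = \coker(M_1 \rightarrow M_2)$ (your $\ker(c) \cong \operatorname{im}(b)$) and $N_2 = \ker(M_4 \rightarrow M_5)$ (your $\operatorname{im}(c)$), then invokes \Cref{prop:eff_closed_kernel_cokern} and \Cref{prop:eff_closed_ext}. No substantive difference.
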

\begin{proof}
	This sequence decomposes as
	\begin{equation*}
	\begin{tikzcd}[column sep=tiny, row sep=small]
		&&                           &                &                           & N_2 \arrow[rd] &               &     \\
	M_1 \arrow[rr] && M_2 \arrow[rd] \arrow[rr] &                & M_3 \arrow[ru] \arrow[rr] &                & M_4 \arrow[rr] && M_5 \\
		&&                           & N_1 \arrow[ru] &                           &                &               &    
	\end{tikzcd}
	\end{equation*}
	where $N_1$ is the cokernel of $M_1 \rightarrow M_2$, $N_2$ is the kernel of
	$M_4 \rightarrow M_5$ and the sequence
	\begin{equation*}
		N_1 \rightarrow M_3 \rightarrow N_2
	\end{equation*}
	is short exact in $\Mod$. Propositions \ref{prop:eff_closed_ext} and
	\ref{prop:eff_closed_kernel_cokern} imply that $M_3$ is effaceable.
	\end{proof}
\subsection{Comparison with Krause's derived Auslander formula}
Let $\A$ be a small abelian category.
Krause \cite{krause_deriving_2015} proves that acyclic complexes in
$\A$ of the form
\begin{equation*}
	\dots \rightarrow 0 \rightarrow X_{2} \rightarrow X_1 \rightarrow X_{0} \rightarrow 0 \rightarrow \dots
\end{equation*}
constitute the essential image of the composition
\begin{equation*}
	\operatorname{eff}(\A) \hookrightarrow \operatorname{mod}(\A) \simeq
		D^b(\operatorname{mod}(\A))\heart \hookrightarrow D^b(\operatorname{mod}(\A))
\end{equation*}
under the canonical equivalence
\begin{equation*}
	D^b(\operatorname{mod}(\A)) \simeq \kba \eqdot
\end{equation*}
\Cref{remark:primitive_acyclic_effaceable} is a generalization of this observation:
There exists a diagram of \cats which commutes up to natural equivalence
\begin{equation*}
	\begin{tikzcd}[
		column sep={1.8cm,between origins},
		row sep=tiny
		]
		\SW(\PSfA) \arrow[dddd, hook]\arrow[rrrr, Isom, leftarrow] \ar[rrd, hookleftarrow] &&&& \kba \arrow[dd, Isom] \\
		&& \A \arrow[rru, hook] \arrow[d, hook] \\[0.35cm]
		& \makebox[\widthof{shs}][r]{$\EffA$} \arrow[r, hook] & \modA \arrow[rr, hook, "\heartsuit"] \arrow[d, hook] && \dbsmod \arrow[dd, hook] \\[0.35cm]
		&& \ModA \arrow[rrd, hook,"\heartsuit"]\\
		\SW(\PSA) \arrow[rru, hookleftarrow, "\heartsuit"] \arrow[rrrr, Isom, leftarrow]  &&&& \dbmod \eqcomma
		\end{tikzcd}
\end{equation*} 
where all functors are fully faithful.
Note that \cite[Prop.~7.55]{bunke_controlled_2019} together with \HTT{Prop.~1.3.3.14} implies the existence of the horizontal
equivalences and the commutativity up to natural equivalence.

\subsection{Acyclic objects}
\begin{definition}\label{def:acyclics}
	The objects in the stable closure of the primitive acyclic objects in $\SW(\PSf)$---equivalently,
	the objects in $\SW(\PSf)$ arising from the primitive acyclic objects by positive and negative suspensions and
	cofibers---are \textit{acyclic} objects.
\end{definition}
\begin{prop}\label{prop:acyclics_built_from_primitive_acyclics}
	Let $X \in \SW(\PSf)$ be an acyclic object. For all $n \in \N$,
	the homotopy group $\pi_n(X)$ is effaceable.
\end{prop}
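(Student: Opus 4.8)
The plan is to induct on the inductive description of acyclic objects in \Cref{def:acyclics}, namely that every acyclic object arises from the primitive acyclic objects by finitely many (de)suspensions and cofibers. Concretely, I would show that the class of objects of $\SW(\PSf)$ all of whose homotopy groups (computed in the ambient stable \cat $\SW(\PS)$, as in \Cref{warning:PSf}) are effaceable contains the primitive acyclics and is closed under suspension, desuspension and cofibers.

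For the base case, \Cref{remark:primitive_acyclic_effaceable} identifies a primitive acyclic object with an effaceable functor sitting in the heart $\SW(\PS)\heart \simeq \Mod$. Being concentrated in degree zero, such an object has $\pi_n = 0$ for $n \neq 0$ and $\pi_0$ equal to the effaceable functor in question; since the zero object is effaceable, all of its homotopy groups are effaceable. Closure under suspension and desuspension is then immediate, because the shift functor of $\SW(\PSf)$ merely shifts the homotopy groups, $\pi_n(\Sigma X) \cong \pi_{n-1}(X)$ and $\pi_n(\Omega X) \cong \pi_{n+1}(X)$, and so preserves effaceability of the whole collection.

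The substantive step is closure under cofibers, and this is exactly where the weak Serre property of $\Eff$ established in the preceding lemmas is used. Given a map $f\colon X \rightarrow Y$ between acyclic objects with cofiber $C$, I would apply the homotopy functor to the triangle $X \rightarrow Y \rightarrow C \rightarrow \Sigma X$ to obtain the long exact sequence of homotopy groups in $\Mod$, and extract for each $n$ its five-term segment
\[
	\pi_n(X) \rightarrow \pi_n(Y) \rightarrow \pi_n(C) \rightarrow \pi_{n-1}(X) \rightarrow \pi_{n-1}(Y) \eqdot
\]
By the inductive hypothesis every term except the middle one $\pi_n(C)$ is effaceable, so \Cref{cor:two_out_of_three_primitive_acyclics} immediately gives that $\pi_n(C)$ is effaceable, completing the induction.

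I do not expect a genuine obstacle here: the real difficulty---that $\Eff$ behaves like a weakly Serre subcategory of $\Mod$---was already dispatched in \Cref{prop:eff_closed_ext,prop:eff_closed_kernel_cokern,cor:two_out_of_three_primitive_acyclics}. The only points requiring care are bookkeeping the indexing of the long exact sequence so that it matches the five-term sequence in the corollary, and keeping in mind that the homotopy groups of objects of $\SW(\PSf)$ must be computed in the larger \cat $\SW(\PS)$, where they land in $\Mod$ and effaceability makes sense.
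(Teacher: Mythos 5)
Your argument is correct and matches the paper's own proof essentially step for step: both show that the class of objects of $\SW(\PS)$ with effaceable homotopy groups contains the primitive acyclics (which are concentrated in degree zero by \Cref{remark:primitive_acyclic_effaceable}), is closed under positive and negative suspensions, and is closed under cofibers via the long exact sequence of homotopy groups together with \Cref{cor:two_out_of_three_primitive_acyclics}. No gaps; the bookkeeping points you flag (indexing of the five-term segment, computing homotopy groups in the ambient category $\SW(\PS)$) are exactly the right ones and are handled the same way in the paper.
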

\begin{proof}
	As $\SW(\PSf)$ is a subcategory of $\SW(\PS)$ which is closed under positive and negative suspensions and cofibers,
	it is sufficient to show
	that the subcategory of $\SW(\PS)$ spanned by the objects with this property is closed under positive and negative suspensions and 
	cofibers, and contains the primitive acyclic objects. That this subcategory is closed under positive and negative suspensions is clear.
	The fact that
	for a primitive acyclic object $X$, the homotopy groups are effaceable is shown in
	\Cref{remark:primitive_acyclic_effaceable} (note that $\pi_n(X) = 0$ for $n \neq 0$ in this case). To prove that for a map $f\colon X \rightarrow Y$ 
	where $X, Y$ have effaceable homotopy groups, the cofiber has this property as well, consider the 
	corresponding long exact sequence of homotopy groups and apply
	\Cref{cor:two_out_of_three_primitive_acyclics}.
\end{proof}
\begin{prop}\label{prop:acyclics_right_adjoint}
	Let $X \in \SW(\PSf)$ be an acyclic object. The truncation $\tau_{\geq 0}X \in \PS$ is
	acyclic and lies in $\PSf$.
\end{prop}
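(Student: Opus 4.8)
The plan is to reconstruct $Y \vcentcolon= \tau_{\geq 0}X$ from its homotopy groups along its connective Postnikov tower, checking at each stage that the object produced is simultaneously acyclic and contained in $\PSf$. Since $\PS$ is the aisle of the t-structure on $\SW(\PS)$ (\Cref{warning:PSf}), the truncation $Y$ lies in $\PS$ automatically; the real content is to upgrade this to membership in $\PSf$ and to prove acyclicity at the same time.

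First I would isolate the two closure properties driving the argument. By \Cref{def:acyclics} the acyclic objects are closed under suspension, desuspension and cofibers, hence also under extensions, since a cofiber sequence $A \to B \to C$ presents $B$ as $\cofib(C[-1] \to A)$. On the other side, $\PSf$ is prestable by \Cref{prop:psf_prestable}, so through the fully faithful exact embedding $\SW(\PSf) \hookrightarrow \SW(\PS)$ of \Cref{warning:PSf} its essential image is closed under extensions in $\SW(\PS)$. I would also note that an acyclic object has bounded homotopy groups, being obtained from finitely many primitive acyclics---each concentrated in degree $0$---by finitely many suspensions and cofibers.

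Next I would assemble the Postnikov input. By \Cref{prop:acyclics_built_from_primitive_acyclics} each $\pi_n(X)$ with $n \geq 0$ is effaceable, and by \Cref{remark:primitive_acyclic_effaceable} the corresponding object of the heart $\SW(\PS)\heart \simeq \Mod$ is a primitive acyclic lying in $\PSf$. Hence, for $n \geq 0$, the shift $\pi_n(X)[n] = \Sigma^n \pi_n(X)$ is again acyclic and, as $\PSf$ is closed under finite colimits, still lies in $\PSf$. Since $\pi_n(Y) = \pi_n(X)$ for $n \geq 0$ and vanishes otherwise, these objects are exactly the graded pieces of the Postnikov filtration of $Y$.

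Finally I would run a downward induction along the connective covers $\tau_{\geq b}Y \to \dots \to \tau_{\geq 1}Y \to \tau_{\geq 0}Y = Y$, where $b$ bounds the homotopy of $Y$. The base case $\tau_{\geq b}Y \simeq \pi_b(X)[b]$ is acyclic and in $\PSf$ by the previous step. For the inductive step I would invoke the cofiber sequence $\tau_{\geq n+1}Y \to \tau_{\geq n}Y \to \pi_n(X)[n]$, which exhibits $\tau_{\geq n}Y$ as an extension of two objects that are, by the inductive hypothesis and the previous paragraph, both acyclic and contained in $\PSf$; the two closure properties then place $\tau_{\geq n}Y$ in both classes. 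Taking $n = 0$ gives the claim. The step demanding care---and the main obstacle---is that $\PSf$ is not closed under finite limits (\Cref{warning:PSf}), so reconstructing a connective object from its homotopy groups by fibers would leave $\PSf$; the argument avoids this by only ever using closure under extensions and suspensions, which keep us inside $\PSf$.
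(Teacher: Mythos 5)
Your proof is correct and follows essentially the same route as the paper: both identify the homotopy groups of $\tau_{\geq 0}X$ as effaceable via \Cref{prop:acyclics_built_from_primitive_acyclics}, recognize them as primitive acyclics in $\PSf$ via \Cref{remark:primitive_acyclic_effaceable}, and then rebuild the bounded connective object $\tau_{\geq 0}X$ from suspensions of these by iterated extensions, using that both $\PSf$ and the acyclics are closed under extensions. Your version merely makes explicit the Postnikov induction that the paper compresses into one sentence.
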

\begin{proof}
	The homotopy groups of $\tau_{\geq 0}X \in \PS$ agree with those of $X$ in positive degrees
	and are zero in negative degrees. By \Cref{prop:acyclics_built_from_primitive_acyclics},
	its homotopy groups are
	all effaceable. Consider the homotopy groups of $\tau_{\geq 0}X$ as objects in $\PS$
	via the embedding $\Mod \lhook\joinrel\xrightarrow{\heartsuit} \PS$.
	Because it is bounded and connective, $\tau_{\geq 0} X$ is generated under repeated extensions
	of positive suspensions of its homotopy groups. As $\PSf$ is closed in $\PS$ under colimits and extensions,
	\Cref{remark:primitive_acyclic_effaceable} finishes the proof.
\end{proof}
\section{The stable hull}\label{section:Stable_hull}
\begin{definition}
	A morphism in $\SW(\PSf)$ is a \textit{quasi-isomorphism} if its cofiber is an acyclic object. We denote the  Dwyer-Kan localization of
	$\SW(\PSf)$ at the class of quasi-isomorphisms by $\she$. Similarly, we denote the Dwyer-Kan localization of $\PSf$
	at the class of quasi-isomorphisms in $\PSf$ by $\shge$.
\end{definition}
\begin{remark}
	The Dwyer-Kan localization of \cats, sometimes also just called \textit{\categorical localization}, is the universal functor sending a class of edges to equivalences.
	This is not equivalent to the concept described by Lurie in \HTT{Sec.~5.2.7}.
	For a very thorough treatment of the Dwyer-Kan localization, see \CIS{Sec.~7.1}.
\end{remark}
\begin{remark}
	The functor $\SW(\PSf) \rightarrow \she$ is the Verdier quotient of $\SW(\PSf)$ by
	its full subcategory of acyclic objects. It is well known that Verdier quotients of triangulated categories
	allow for a simple description of their hom-spaces.
	A very explicit description of Verdier quotients of stable \cats has been
	found by Nikolaus and Scholze in \cite[Th.~I.3.3]{nikolaus_topological_2018}, which we
	will quickly repeat here for $\she$:
	For two objects $X, Y \in \SW(\PSf)$ with images $\overline{X}, \overline{Y}$
	in $\she$, the mapping space is given by the filtered colimit
	\begin{equation*}
		\map(\overline{X}, \overline{Y}) \simeq \colim_{f: Z \rightarrow Y\text{ a quasi-isomorphism}}(X, Z)
	\end{equation*}
	The mapping spaces of Verdier quotients of stable \cats are described by
	Nikolaus and Scholze in \cite[Th.~I.3.3]{nikolaus_topological_2018}.
\end{remark}
\begin{prop}\label{prop:qi_finitely_saturated}
	The quasi-isomorphisms endow $\SW(\PSf)$ with the structure of an \cat of cofibrant objects,
	as in the dual of \CIS{Def.~7.5.7}, where all morphisms are cofibrations. The quasi-isomorphisms which lie
	in $\PSf$ endow $\PSf$ with the structure of an \cat of cofibrant objects, where all morphisms are cofibrations.
\end{prop}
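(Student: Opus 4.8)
The plan is to verify, in both cases, the dualised axioms of \CIS{Def.~7.5.7} for the pair consisting of the quasi-isomorphisms as weak equivalences and all morphisms as cofibrations. Since every morphism is declared a cofibration, the axioms concerning cofibrations reduce to the statements that the ambient \cat admits an initial object and pushouts (so that pushouts of cofibrations exist and are again cofibrations, and every object is cofibrant), while the content to be checked about the weak equivalences is that they contain the equivalences, satisfy two-out-of-three, and are stable under cobase change. Both $\SW(\PSf)$ and $\PSf$ have the required colimits: $\SW(\PSf)$ is stable, and $\PSf$ admits finite colimits with an initial object by construction. Moreover the canonical functor $\PSf \hookrightarrow \SW(\PSf)$ is right exact, so cofibers computed in $\PSf$ agree with those computed in $\SW(\PSf)$; consequently, in either \cat a morphism $f$ is a quasi-isomorphism precisely when $\cofib(f)$, viewed as an object of $\SW(\PSf)$, is acyclic.

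The key structural input is that, by \Cref{def:acyclics}, the acyclic objects are closed under positive and negative suspension and under cofibers, hence form a stable subcategory of $\SW(\PSf)$; in particular they are closed under fibers and contain the zero object. From this the axioms on the weak equivalences follow formally. An equivalence has cofiber $0$, which is acyclic, so equivalences are quasi-isomorphisms. For two-out-of-three, given composable morphisms $f$ and $g$ the octahedral axiom supplies a cofiber sequence $\cofib(f) \rightarrow \cofib(gf) \rightarrow \cofib(g)$ in $\SW(\PSf)$; if two of these three objects are acyclic, then so is the third, since the acyclic objects are closed under cofibers and fibers. For cobase change, a pushout square with a quasi-isomorphism $f$ on one side produces, since $\SW(\PSf)$ is stable and pushouts preserve cofibers, an equivalence $\cofib(f') \simeq \cofib(f)$ between the cofibers of the two parallel edges; hence $f'$ is again a quasi-isomorphism.

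For the statement about $\PSf$ the same arguments apply verbatim once one observes that all the relevant objects remain inside $\PSf$: the three cofibers appearing in the octahedral sequence are cofibers of morphisms of $\PSf$ and therefore lie in $\PSf$, and the pushout $Y' = Y \sqcup_X X'$ of a span in $\PSf$ is again in $\PSf$ and is preserved by the embedding into $\SW(\PSf)$. Thus the acyclicity conditions, which are formulated in $\SW(\PSf)$, may be checked on these objects of $\PSf$, and two-out-of-three and stability under cobase change for the quasi-isomorphisms of $\PSf$ reduce directly to their counterparts in $\SW(\PSf)$.

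I expect the only genuine subtlety to be the interplay between the non-stable \cat $\PSf$ and its stable hull: because $\PSf$ lacks fibers, neither the two-out-of-three property nor the octahedral cofiber sequence can be argued internally, and one must pass to $\SW(\PSf)$. The point that makes this harmless is precisely that the embedding is right exact, so that cofibers---and hence the very notion of quasi-isomorphism---are computed compatibly in the two \cats, allowing every verification to be transported to the stable setting where the closure properties of \Cref{def:acyclics} apply.
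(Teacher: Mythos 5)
Your proof is correct and follows essentially the same route as the paper: both reduce the axioms to the three facts that equivalences have zero cofiber, that a morphism and its pushout share a cofiber, and that the octahedral cofiber sequence $\cofib(f)\to\cofib(gf)\to\cofib(g)$ together with the closure properties of the acyclics gives two-out-of-three. Your write-up is merely more explicit than the paper's about why the verifications for $\PSf$ can be transported to $\SW(\PSf)$ via the right exact embedding.
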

\begin{proof} 
	That the axioms given in \CIS{Def.~7.4.12} are fulfilled follows readily
	from the following three properties of quasi-isomorphisms:
	\begin{itemize}
		\item \textit{Equivalences are quasi-isomorphisms:}
		The cofibers of equivalences are the zero objects.
		\item \textit{Quasi-isomorphisms are stable under pushouts:}
		Every morphism has the same cofiber as its pushout along every other morphism.
		\item \textit{Quasi-isomorphisms have the 2-out-of-3-property:}
		This can be checked on the homotopy categories, where it follows from the 
		octahedral axiom: if $f$ and $g$ are composable, then the cofiber of $g \circ f$ is an 
		extension of the cofiber of $f$ and the cofiber of $g$. \qedhere
	\end{itemize}
\end{proof}
We obtain the following corollary as an application of \CIS{Prop.~7.5.11}.
\begin{corollary}\label{prop:cofibrant_localization}
	Let $\D$ be an \cat with finite colimits.  Restrictions along the functors $\SW(\PSf) \rightarrow \she$ and
	$\PSf \rightarrow \shge$ induce the following equivalences of \cats:
	\begin{align*}
	\begin{split}
			\Fun^\text{rex}(\she, \D) &\eqarrow \Fun^\text{rex}_\text{q-i}(\SW(\PSf), \D) \eqcomma \\
			\Fun^\text{rex}(\shge, \D) &\eqarrow \Fun^\text{rex}_\text{q-i}(\PSf, \D) \eqdot
	\end{split}
	\end{align*}
	Here, the subscript indicates that we restrict further to the full
	subcategory of those functors which send quasi-isomorphisms to equivalences.
\end{corollary}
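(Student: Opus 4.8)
The plan is to obtain both equivalences as formal consequences of the dual of \CIS{Prop.~7.5.11}, feeding in the structure established in \Cref{prop:qi_finitely_saturated}. That proposition, in its colimit-theoretic dual form, describes the localization of an \cat of cofibrant objects at its class of weak equivalences: the localization admits finite colimits, the localization functor is right exact, and, for any \cat $\D$ with finite colimits, restriction along the localization functor induces an equivalence between the right exact functors out of the localization and the right exact functors out of the original \cat that invert the weak equivalences. This is exactly the shape of the asserted statement, so the whole corollary reduces to verifying that its hypotheses apply.

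First I would invoke \Cref{prop:qi_finitely_saturated}, which equips both $\SW(\PSf)$ and $\PSf$ with the structure of an \cat of cofibrant objects whose weak equivalences are precisely the quasi-isomorphisms (respectively, the quasi-isomorphisms lying in $\PSf$). Next I would recall that, by definition, $\she$ and $\shge$ are the localizations of $\SW(\PSf)$ and $\PSf$ at these classes of quasi-isomorphisms. Thus $\she$ and $\shge$ are exactly the localizations to which the dual of \CIS{Prop.~7.5.11} applies, and a single application to each of the two cases yields the two displayed equivalences, together with the fact that $\she$ and $\shge$ inherit finite colimits and that the localization functors are right exact.

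The only point that needs genuine attention is the bookkeeping around the dualization: the cited statement is phrased for \cats of fibrant objects and finite limits, whereas we use it for cofibrant objects and finite colimits, so I would make sure the dualization is applied consistently and that the class of weak equivalences of the structure from \Cref{prop:qi_finitely_saturated} is identified with the quasi-isomorphisms. Because all morphisms are declared to be cofibrations in that structure, there is no further side condition on the functors beyond right exactness and the inversion of quasi-isomorphisms, so I do not anticipate any real obstacle once the conventions are aligned.
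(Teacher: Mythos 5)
Your proposal is correct and follows essentially the same route as the paper: the paper likewise establishes the cofibrant-objects structure in \Cref{prop:qi_finitely_saturated} and then obtains the corollary as a direct application of (the dual of) \CIS{Prop.~7.5.11}, with $\she$ and $\shge$ being by definition the localizations at the quasi-isomorphisms. Your remark about carefully dualizing from fibrant objects and finite limits to cofibrant objects and finite colimits is exactly the bookkeeping the paper leaves implicit.
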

\begin{prop}\label{prop:quasi_iso_right_adjoint}
	Let $X \in \PSf$, $Y \in \SW(\PSf)$, and $w\colon X \rightarrow Y$ be a quasi-isomorphism.
	Using the conventions of Remark \ref{warning:PSf}, the truncation
	$\tau_{\geq 0} Y \in \PS$
	lies in $\PSf$ and the induced map
	$X \rightarrow \tau_{\geq 0} Y$ is a
	quasi-isomorphism.
\end{prop}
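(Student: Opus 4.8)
The plan is to factor the quasi-isomorphism $w$ through the connective cover and to control the resulting error term using the structure theory of acyclic objects established above. By \Cref{key_input} the connective part of the t-structure on $\SW(\PS)$ is precisely $\PS$, so $\tau_{\geq 0}$ is right adjoint to the inclusion $\PS \hookrightarrow \SW(\PS)$; since $X$ is connective, the induced map is $\tau_{\geq 0} w\colon X = \tau_{\geq 0} X \to \tau_{\geq 0} Y$. Writing $C \vcentcolon= \cofib(w)$, which is acyclic by hypothesis, I would first record the cofiber sequence
\[
	\cofib(\tau_{\geq 0} w) \to C \to \tau_{\leq -1} Y \eqdot
\]
This arises as the total cofiber of the commutative square comparing $\tau_{\geq 0} w$ with $w$ along the counit $\tau_{\geq 0} Y \to Y$ (equivalently, the octahedron on $X \to \tau_{\geq 0} Y \to Y$), using that the fiber sequence $\tau_{\geq 0} Y \to Y \to \tau_{\leq -1} Y$ identifies the cofiber of the counit with $\tau_{\leq -1} Y$.

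The heart of the argument is to show that $\tau_{\leq -1} Y$ is acyclic. First I would identify it: the long exact sequence of homotopy groups for $X \to Y \to C$, together with the connectivity of $X$, yields isomorphisms $\pi_n(Y) \cong \pi_n(C)$ for all $n \leq -1$, whence the map $\tau_{\leq -1} Y \to \tau_{\leq -1} C$ induced by $Y \to C$ is an isomorphism on all homotopy groups. Since every acyclic object is bounded---it is obtained from primitive acyclics, which are concentrated in degree $0$, by finitely many suspensions and cofibers---both sides are bounded, so this map is an equivalence $\tau_{\leq -1} Y \simeq \tau_{\leq -1} C$. Now $\tau_{\geq 0} C$ is acyclic by \Cref{prop:acyclics_right_adjoint}, and as the acyclic objects form a stable subcategory (\Cref{def:acyclics}) the cofiber $\tau_{\leq -1} C = \cofib(\tau_{\geq 0} C \to C)$ is acyclic. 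Feeding this back into the displayed cofiber sequence, $\cofib(\tau_{\geq 0} w)$ is the fiber of a map between acyclic objects and is therefore acyclic; this proves that $\tau_{\geq 0} w$ is a quasi-isomorphism.

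It remains to place $\tau_{\geq 0} Y$ in $\PSf$. A quick inspection of the long exact sequence for $X \to \tau_{\geq 0} Y \to \cofib(\tau_{\geq 0} w)$ shows, using that $X$ and $\tau_{\geq 0} Y$ are connective, that $\cofib(\tau_{\geq 0} w)$ is connective as well. A connective acyclic object lies in $\PSf$ by \Cref{prop:acyclics_right_adjoint} (it coincides with its own connective truncation), so $\tau_{\geq 0} Y$ sits in a cofiber sequence $X \to \tau_{\geq 0} Y \to \cofib(\tau_{\geq 0} w)$ with both outer terms in $\PSf$. As $\PSf$ is closed under extensions in $\PS$ (\Cref{prop:psf_prestable}), this gives $\tau_{\geq 0} Y \in \PSf$, completing the proof. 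The main obstacle is the middle paragraph: correctly identifying $\tau_{\leq -1} Y$ with $\tau_{\leq -1} C$ and justifying the Whitehead-type comparison, for which the boundedness of acyclic objects is essential.
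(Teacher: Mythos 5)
Your proof is correct, and while it shares the paper's overall skeleton --- reduce to $\tau_{\geq 0}w$, identify $\cofib(\tau_{\geq 0}w)$ with $\tau_{\geq 0}\cofib(w)$, then conclude via \Cref{prop:acyclics_right_adjoint} and closure of $\PSf$ under extensions in $\PS$ --- the key identification is carried out by a genuinely different mechanism. The paper applies $\tau_{\geq 0}$ to the entire cofiber square of $w$ and observes that the right and outer squares remain bicartesian after truncation (truncation preserves limits, $\tau_{\geq 0}\Sigma X \simeq \Sigma X$, and $\PS$ is prestable), so the Pasting Law yields $\cofib(\tau_{\geq 0}w) \simeq \tau_{\geq 0}\cofib(w)$ with no boundedness input whatsoever. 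You instead run the octahedron on $X \to \tau_{\geq 0}Y \to Y$ and must then prove the Whitehead-type comparison $\tau_{\leq -1}Y \simeq \tau_{\leq -1}C$, which requires knowing that both objects are bounded; your parenthetical only justifies boundedness on the $C$-side, so you should add the (easy) remark that every object of $\SW(\PSf)$ is bounded below because some finite suspension of it lies in the connective category $\PSf$, whence $\tau_{\leq -1}Y$ is bounded. With that patched, your $\fib(C \to \tau_{\leq -1}Y) \simeq \tau_{\geq 0}C$ is exactly the paper's $\tau_{\geq 0}Z$, and the remaining steps (acyclics form a stable subcategory, a connective acyclic lies in $\PSf$, extension-closedness of $\PSf$) all check out. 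The trade-off: the paper's square-pasting argument is shorter and more robust, while your route makes the error term $\tau_{\leq -1}Y$ and the reason for its acyclicity completely explicit, at the cost of invoking boundedness of acyclic objects and a Whitehead lemma for bounded objects.
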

\begin{proof}
	In the following proof, we view $w$ as a morphism in $\SW(\PS)$ by means of the fully faithful functor $\SW(\PSf) \hookrightarrow \SW(\PS)$.
	Note that by the 2-out-of-3 property of quasi-isomorphisms and since the counit
	$\tau_{\geq 0}X \rightarrow X$ is an equivalence, it suffices to prove
	the claim for $\tau_{\geq 0} w$. Applying $\tau_{\geq 0}$ to the cofiber diagram of $w$
	yields a commutative diagram
	\begin{equation*}
		\begin{tikzcd}
			\tau_{\geq 0}X \arrow[r, "\tau_{\geq 0} w"] \arrow[d] & \tau_{\geq 0}Y \arrow[d]\arrow[r]& 0 \arrow[d] \\
			0 \arrow[r] & \tau_{\geq 0}Z \arrow[r] &  \tau_{\geq 0}\Sigma X \eqcomma
		\end{tikzcd}
	\end{equation*}
	where $Z$---and hence by \Cref{prop:acyclics_right_adjoint},
	$\tau_{\geq 0} Z$ as well---is acyclic by assumption.
	Both the right and the outer square are bicartesian before truncating, hence they are pullback diagrams in $\PS$,
	as truncation preserves limits. Because $\PS$ is prestable and $\tau_{\geq 0} \Sigma X$ is equivalent
	to $\Sigma X$, they are bicartesian in $\PS$ and hence bicartesian in $\SW(\PS)$ as well.
	By the Pasting Law for cartesian squares \HTT{Lem.~4.4.2.1}, the left square is bicartesian as well.
	Since $\PSf$ is closed in $\PS$ under extensions, this finishes the proof.
\end{proof}
\begin{prop}\label{prop:connective_inclusion_ff}
	The functor $\shge \rightarrow \she$, which is induced by the composite $\PSf \hookrightarrow 
	\SW(\PSf) \rightarrow \she$, is fully faithful.
\end{prop}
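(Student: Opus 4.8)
The plan is to prove full faithfulness by comparing mapping spaces: since the localizations $\PSf \to \shge$ and $\SW(\PSf) \to \she$ do not change the underlying objects, it suffices to show that for all $X, Y \in \PSf$ the induced map $\map_{\shge}(X, Y) \to \map_{\she}(X, Y)$ is an equivalence. I would compute both sides as filtered colimits obtained by resolving the \emph{target} by quasi-isomorphisms. On the one hand, $\SW(\PSf) \to \she$ is the Verdier quotient by the acyclic objects, so by the Nikolaus--Scholze description of mapping spaces in a Verdier quotient \cite{nikolaus_topological_2018} one has $\map_{\she}(X,Y) \simeq \colim_{(Y \to Y') \in Q_Y} \map_{\SW(\PSf)}(X, Y')$, where $Q_Y$ denotes the \cat of quasi-isomorphisms out of $Y$ in $\SW(\PSf)$, with quasi-isomorphisms under $Y$ as morphisms. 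On the other hand, $\PSf$ is an \cat of cofibrant objects in which the quasi-isomorphisms are stable under pushout (\Cref{prop:qi_finitely_saturated}), so the corresponding description of mapping spaces in such a localization (cf.~\CIS{Prop.~7.5.11}) gives $\map_{\shge}(X, Y) \simeq \colim_{(Y \to Y') \in Q_Y^{\PSf}} \map_{\PSf}(X, Y')$, where $Q_Y^{\PSf} \subseteq Q_Y$ is the full subcategory on those $Y \to Y'$ with $Y' \in \PSf$. Since $\PSf \hookrightarrow \SW(\PSf)$ is fully faithful (\Cref{warning:PSf}), the comparison map is the one induced by the inclusion $Q_Y^{\PSf} \hookrightarrow Q_Y$.

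The key step is to produce a functor $\tau_{\geq 0} \colon Q_Y \to Q_Y^{\PSf}$ sending $(Y \to Y')$ to $(Y \to \tau_{\geq 0} Y')$. That this is well defined on objects is exactly \Cref{prop:quasi_iso_right_adjoint}: for a quasi-isomorphism $Y \to Y'$ with $Y \in \PSf$, the truncation $\tau_{\geq 0} Y'$ lies in $\PSf$ and $Y \to \tau_{\geq 0} Y'$ is again a quasi-isomorphism. For functoriality I need $\tau_{\geq 0}$ to preserve quasi-isomorphisms: as it is a right adjoint it preserves fibers, so $\fib(\tau_{\geq 0} f) \simeq \tau_{\geq 0} \fib(f)$, and by \Cref{prop:acyclics_right_adjoint} the truncation of an acyclic object is acyclic, whence $\tau_{\geq 0} f$ is a quasi-isomorphism whenever $f$ is. The counit $\tau_{\geq 0} Y' \to Y'$ of the connective-truncation adjunction then exhibits $\tau_{\geq 0}$ as right adjoint to the inclusion $Q_Y^{\PSf} \hookrightarrow Q_Y$, the unit being an equivalence since objects of $\PSf$ are connective. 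As every right adjoint is cofinal, $\tau_{\geq 0}$ is cofinal.

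It remains to feed this into the colimit comparison. For fixed connective $X \in \PSf$, the defining adjunction of the connective part of the t-structure on $\SW(\PS)$ gives, for every $Y' \in Q_Y$, an equivalence $\map_{\SW(\PSf)}(X, \tau_{\geq 0} Y') \xrightarrow{\sim} \map_{\SW(\PSf)}(X, Y')$ induced by the counit; concretely, the cofiber $\tau_{\leq -1} Y'$ is coconnective, so $\map_{\SW(\PSf)}(X, \tau_{\leq -1} Y')$ is contractible. Hence the functor $\Phi \colon Y' \mapsto \map_{\SW(\PSf)}(X, Y')$ on $Q_Y$ is naturally equivalent to the composite $\Psi \circ \tau_{\geq 0}$, where $\Psi \colon Y' \mapsto \map_{\PSf}(X, Y')$ on $Q_Y^{\PSf}$. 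Using cofinality of $\tau_{\geq 0}$ we obtain $\map_{\she}(X,Y) \simeq \colim_{Q_Y} \Phi \simeq \colim_{Q_Y}\Psi \circ \tau_{\geq 0} \simeq \colim_{Q_Y^{\PSf}} \Psi \simeq \map_{\shge}(X,Y)$; tracing through the identifications, this equivalence is the comparison map, so $\shge \to \she$ is fully faithful.

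The main obstacle is getting the variance right. The tools available (\Cref{prop:quasi_iso_right_adjoint}, \Cref{prop:acyclics_right_adjoint}) produce the truncation together with the counit $\tau_{\geq 0} Y' \to Y'$, i.e.\ a map \emph{into} $Y'$; this is compatible with the colimit only if one resolves the target rather than the source, so that $\tau_{\geq 0}$ arises as a right adjoint, and hence a cofinal functor, rather than an initial one. Checking that $\tau_{\geq 0}$ is a well-defined functor on the relevant slice categories (landing in $\PSf$ and preserving quasi-isomorphisms) and that its counit becomes an equivalence after mapping out of a connective object is where the previously established propositions do the real work.
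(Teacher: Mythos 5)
Your proof is correct, and it reaches the conclusion by a genuinely different technical route than the paper, although both arguments turn on the same key input, namely \Cref{prop:quasi_iso_right_adjoint} (replacing the apex of a roof by its connective truncation). The paper first uses \Cref{prop:cofibrant_localization} and the dual of \CIS{Th.~7.6.10} to reduce full faithfulness of the right exact functor $\shge \to \she$ to full faithfulness on homotopy categories, and then runs the classical left calculus of fractions there: injectivity and surjectivity of $\hom_{\ho(\shge)}(X,Y) \to \hom_{\ho(\she)}(X,Y)$ are both checked by truncating the apex of the relevant roofs. You instead stay at the level of mapping spaces throughout, writing both sides as filtered colimits over the co-slices of quasi-isomorphisms out of $Y$ and comparing them by exhibiting $\tau_{\geq 0}\colon Q_Y \to Q_Y^{\PSf}$ as a right adjoint to the inclusion, hence a cofinal functor. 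This buys you an argument that never passes through homotopy categories, at the cost of verifying the various identifications; the one step I would ask you to write out is the last one, namely that your chain of equivalences really computes the canonical comparison map $\colim_{Q_Y^{\PSf}}\Psi \to \colim_{Q_Y}\Phi$ rather than merely an abstract equivalence of the two spaces. It does: since the counit $\tau_{\geq 0}Y' \to Y'$ is a morphism of $Q_Y$, the colimit cocone of $\Phi$ evaluated at $\tau_{\geq 0}Y'$ factors as $\Phi(\tau_{\geq 0}Y') \to \Phi(Y') \to \colim_{Q_Y}\Phi$, and this identifies the map $\colim_{Q_Y}(\Psi\circ\tau_{\geq 0}) \to \colim_{Q_Y}\Phi$ induced by the counit with the composite of the cofinality equivalence and the comparison map. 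This extra care is genuinely needed because, as you correctly sense in your closing remark on variance, the inclusion $Q_Y^{\PSf} \hookrightarrow Q_Y$ is a left adjoint and therefore initial rather than cofinal, so one cannot conclude from it directly; the detour through the cofinal $\tau_{\geq 0}$ is what makes the colimit comparison work.
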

\begin{proof}
	By Corollary \ref{prop:cofibrant_localization}, the induced functor is right exact. Hence, by
	the dual of \CIS{Th.~7.6.10}, it is sufficient to check that the induced functor on homotopy categories is
	fully faithful. Note that the homotopy category of the \categorical Dwyer-Kan localization agrees with
	the 1-categorical localization of the homotopy category and that the morphism sets of the localization can be calculated using left calculus of fractions.
	We need to prove that, for
	objects $X, Y \in \PSf$, the induced map
	\begin{equation*}
		\hom_{\ho(\shge)}(X, Y) \rightarrow \hom_{\ho(\she)}(X, Y)
	\end{equation*}
	is an isomorphism. To prove that this map is injective, note that two roofs in $\ho(\PSf)$ which become equivalent in $\ho(\she)$
	yield a commutative diagram in $\ho(\SW(\PSf))$ of the form
	\begin{equation*}
		\begin{tikzcd}
			& Y' \arrow[Isom]{d} \ar[rd, Isom, leftarrow]\\
			X \arrow[ru] \arrow[r] \arrow[rd] & \widetilde{Y} \ar[d, Isom, leftarrow] \ar[r, Isom, leftarrow] & Y\\
			& Y'' \ar[ru, Isom, leftarrow]                                
		\end{tikzcd}
	\end{equation*}
	where all the marked morphisms are quasi-isomorphisms and all the
	objects but $\widetilde{Y}$ are in $\ho(\PSf)$.
	Replacing $\widetilde{Y}$ with $\tau_{\geq 0}\widetilde{Y}$ and using Proposition
	\ref{prop:quasi_iso_right_adjoint}, we can form an analogous diagram in $\ho(\PSf)$, showing the roofs
	are equivalent as maps in $\ho(\shge)$. To show that this map is surjective,
	using the same argument, every roof in $\ho(\SW(\PSf))$ is equivalent to a roof in $\ho(\PSf)$.
\end{proof}
\begin{corollary}\label{cor:whitehead_dbge}
	The \cat $\shge$ is prestable and the induced functor $\SW(\shge) \rightarrow \she$ is an equivalence.
\end{corollary}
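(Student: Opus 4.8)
The plan is to exhibit $\shge$ as a full subcategory of the stable \cat $\she$ that is closed under finite colimits and extensions, and then to read off both assertions from \Cref{prestable_subcat_ex}. Since $\she$ is the Verdier quotient of the stable \cat $\SW(\PSf)$ by its acyclic objects, it is itself stable and the quotient functor $q\colon \SW(\PSf)\to\she$ is exact. By \Cref{prop:connective_inclusion_ff} the comparison $\shge\to\she$ is fully faithful, and by \Cref{prop:cofibrant_localization} it is right exact; as both \cats admit finite colimits, finite colimits of objects of $\shge$ computed in $\she$ are already computed in $\shge$, so $\shge$ is closed under finite colimits in $\she$. In particular $\Sigma A\in\shge$ whenever $A\in\shge$.

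The main step is closure under extensions. I would start from a cofiber sequence $A\to B\to C$ in $\she$ with $A,C\in\shge$, view $A$ and $C$ as connective objects of $\PSf$, and encode the extension by its classifying map $g\colon C\to\Sigma A$, so that $B\simeq\fib(g)$. As $C$ and $\Sigma A$ lie in $\shge$, the map $g$ comes from a morphism in $\shge$; invoking the calculus of fractions for the Verdier quotient I would represent it by a fraction $C\xleftarrow{w}C'\xrightarrow{t}\Sigma A$ with $C'\in\PSf$ and $w$ a quasi-isomorphism. Since $q$ is exact and $q(w)$ is an equivalence, $q(\fib(t))\simeq\fib(g)\simeq B$. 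The crucial point is to keep the quasi-isomorphism on the source: because $\Sigma A$ is $1$-connective (its $\pi_0$ vanishes), the long exact sequence of homotopy groups forces $\hat B\vcentcolon=\fib(t)$ to be connective, and rotating the fiber sequence $\hat B\to C'\to\Sigma A$ displays $A\to\hat B\to C'$ as a cofiber sequence with $A,C'\in\PSf$. Prestability of $\PSf$ (\Cref{prop:psf_prestable}) then gives $\hat B\in\PSf$, so $B\simeq q(\hat B)\in\shge$.

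With closure under finite colimits and extensions in hand, \Cref{prestable_subcat_ex}, applied to the full subcategory $\shge$ of the stable (hence prestable) \cat $\she$, shows at once that $\shge$ is prestable and that the induced functor $\SW(\shge)\to\SW(\she)\simeq\she$ is fully faithful. For essential surjectivity I would use that every object of $\she$ has the form $q(\Sigma^{-n}X)$ for some $X\in\PSf$ and $n\ge 0$, since $\SW(\PSf)$ is generated by the essential image of $\PSf$ under desuspension; as the exact functor $\SW(\shge)\to\she$ has essential image containing $\shge$ and closed under finite colimits and desuspension, it is essentially surjective, and therefore an equivalence.

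I expect the closure under extensions to be the only real obstacle. The danger is that forming $\fib(g)$ directly in $\SW(\PSf)$ can yield a non-connective object carrying a spurious $\pi_{-1}$, which need not lie in $\PSf$; choosing a fraction with the quasi-isomorphism on the source circumvents this, while the truncation estimates of \Cref{prop:quasi_iso_right_adjoint} and \Cref{prop:acyclics_right_adjoint} are what keep the connective objects involved inside $\PSf$ and underlie the full faithfulness input of \Cref{prop:connective_inclusion_ff}.
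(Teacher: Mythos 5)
Your overall architecture coincides with the paper's: show that $\shge$ sits inside the stable \cat $\she$ as a full subcategory closed under finite colimits and extensions, invoke \Cref{prestable_subcat_ex} to get prestability and full faithfulness of $\SW(\shge)\to\she$, and obtain essential surjectivity from the fact that every object of $\she$ is a finite desuspension of the image of an object of $\PSf$. All of that is fine and matches the paper. The problem is concentrated exactly where you predicted it would be: in the closure under extensions, the assertion that the classifying map $g\colon C\to\Sigma A$ can be represented by a fraction $C\xleftarrow{w}C'\xrightarrow{t}\Sigma A$ with $C'\in\PSf$ and $w$ a quasi-isomorphism is not justified, and it is the whole content of the step. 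The localization $\ho(\shge)$ is computed from $\ho(\PSf)$ by a \emph{left} calculus of fractions (\Cref{prop:connective_inclusion_ff}), so a morphism $C\to\Sigma A$ is a priori a roof $C\to W\xleftarrow{\sim}\Sigma A$ with the quasi-isomorphism attached to the \emph{target}. You can of course form a right fraction in the ambient Verdier quotient of $\ho(\SW(\PSf))$, e.g.\ by taking $C'=C\times_W\Sigma A$, but then $C'$ only lives in $\SW(\PSf)$ and is in general not connective: its $\pi_{-1}$ is $\coker(\pi_0(C)\to\pi_0(\cofib(\Sigma A\to W)))$, which need not vanish. To repair this one must (a) show this group is effaceable, so that $\tau_{<0}C'$ is acyclic and $\tau_{\geq 0}C'\to C$ is still a quasi-isomorphism, and (b) show $\tau_{\geq 0}C'$ actually lies in $\PSf$. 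Neither point is covered by the lemmas you cite: \Cref{prop:quasi_iso_right_adjoint} and \Cref{prop:acyclics_right_adjoint} concern quasi-isomorphisms \emph{out of} $\PSf$ (connective source, then truncate the target), whereas here the quasi-isomorphism points \emph{into} $\PSf$, and $\Eff$ is only known to be weakly Serre, not closed under arbitrary quotients, so effaceability of that $\pi_{-1}$ requires a separate argument (e.g.\ using that $\pi_0(C)$ is a quotient of a representable).

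Once $C'\in\PSf$ is secured, the rest of your extension step is correct: $\Sigma\fib(t)=\cofib(t\colon C'\to\Sigma A)$ lies in $\Sigma\PSf$ by prestability of $\PSf$ (\Cref{prop:psf_prestable}), so $\fib(t)\in\PSf$ and $B\simeq q(\fib(t))\in\shge$. For comparison, the paper's proof keeps the left calculus of fractions, so that the apex of the roof automatically stays in $\PSf$, and then appeals to prestability of $\PSf$ together with right exactness of $\PSf\to\shge$; the trade-off is that the roof's apex is then only quasi-isomorphic to $\Sigma A$ rather than an honest suspension, so some bookkeeping is still needed there too. In either orientation, the passage between the two types of fractions while staying inside the connective world is the real content of this corollary, and as written your proposal asserts it rather than proves it.
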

\begin{proof}
	The \cat $\she$ is stable by \cite[Th.~I.3.3]{nikolaus_topological_2018}.
	By Corollary \ref{prop:cofibrant_localization}, the embedding $\shge \hookrightarrow \she$ is right exact. 
	We first show that $\shge$ is closed in $\she$ under extensions, which proves both
	that $\shge$ is prestable and that the functor $\SW(\shge) \rightarrow \she$ is fully faithful, see \Cref{prestable_subcat_ex}.

	Let
	$X \rightarrow Y \rightarrow Z$ be an exact sequence in $\she$ with $X, Z$ in the essential image of $\shge$.
	After shifting, we are left to show that the cofiber of the induced map $Z \rightarrow \Sigma X$ lies
	in the essential image of $\Sigma \shge \subseteq \shge$. Since $\shge \rightarrow \she$ is fully
	faithful, we can assume the map $Z \rightarrow \Sigma X$ to be in $\shge$. Using the left calculus
	of fractions, such a map comes up to equivalence from a map in $\PSf$.
	The claim now
	follows from the facts that $\PSf$ is a prestable \cat and that the functor $\PSf \rightarrow \shge$ is right exact.

	It remains to show that the functor $\SW(\shge) \rightarrow \she$ is essentially surjective.
	For this, note that
	for every object $X \in \she$, there exists some $n \in \Z$ such that $\Sigma^n X$ is equivalent to the image
	of some $Y \in \PSf$ under the composition $\PSf \rightarrow \SW(\PSf) \rightarrow \she$. But then
	$X$ is equivalent to the image of $\Sigma^{-n} Y \in \SW(\shge)$.
\end{proof}
\subsection{Size considerations}
As passing from $\E$ to $\PS$ removes any smallness assumptions $\E$ might have,
it is \emph{a priori} not clear which smallness properties are preserved by the construction $\E \rightarrow \she$.
In this section, we see that
$\she$ is essentially $\V$-small.

\begin{lemma}\label{prop:SWPSf_locally_small}
    Let $\U \subseteq \V$ be a Grothendieck universe such that $\E$ is a locally
    $\U$-small \cat. Then $\SW(\PSf)$ is locally $\U$-small.
\end{lemma}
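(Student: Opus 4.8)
The plan is to reduce the statement in two steps to a claim about the values of the presheaves in $\PSf$, and then to control those values through their homotopy groups. First I would pass from $\SW(\PSf)$ to $\PSf$. Since $\PSf$ is closed under finite colimits, every iterated suspension $\Sigma^a X$ of an object $X \in \PSf$ again lies in $\PSf$, and the mapping spaces of the Spanier--Whitehead \cat are computed as filtered colimits, indexed by $\N$, of mapping spaces $\map_{\PSf}(\Sigma^a X, \Sigma^b Y)$ between such iterated suspensions of objects $X, Y \in \PSf$. As $\N$ is $\U$-small and a filtered colimit of $\U$-small spaces indexed by a $\U$-small diagram is again $\U$-small, it suffices to prove that $\PSf$ is locally $\U$-small.

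Second, I would reduce local $\U$-smallness of $\PSf$ to the $\U$-smallness of the individual values $Y(x)$ for $Y \in \PSf$ and $x \in \E$. Fixing $Y$, the class of objects $X \in \PSf$ with $\map_{\PSf}(X, Y)$ essentially $\U$-small contains the representables and is closed under finite colimits, because $\map_{\PSf}(-, Y)$ sends finite colimits to finite limits and a finite limit of $\U$-small spaces is $\U$-small. As $\PSf$ is generated by $\E$ under finite colimits, it then suffices to treat $X = \hat{x}$, in which case $\map_{\PSf}(\hat{x}, Y) \simeq Y(x)$ by the Yoneda Lemma.

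Finally I would establish that $Y(x)$ is essentially $\U$-small for all $Y \in \PSf$ and $x \in \E$ by an induction carried out on the level of homotopy objects. By \Cref{key_input} together with its spectral refinement, the space $Y(x)$ is the underlying infinite loop space of a connective spectrum whose homotopy groups are the values $(\pi_n Y)(x)$ of the homotopy objects $\pi_n Y \in \Mod$ with respect to the standard t-structure on $\SW(\PS)$. I would then show that the full subcategory of those $Y \in \PS$ for which $(\pi_n Y)(x)$ is a $\U$-small abelian group for all $n$ and all $x \in \E$ contains $\E$ and is closed under finite colimits: for a representable one has $(\pi_n \hat{y})(x) \cong \pi_n \map_{\E}(x, y)$, which is $\U$-small because $\E$ is locally $\U$-small; closure under finite coproducts is immediate from the additivity of $\pi_n$; and closure under pushouts follows by writing a pushout in $\SW(\PS)$ as a cofiber and invoking the long exact sequence of homotopy objects in the abelian category $\Mod$, using that $\U$-small abelian groups are closed under subquotients and extensions. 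Since $\PSf$ is the smallest such subcategory, each $Y \in \PSf$ has $\U$-small homotopy groups at every $x$, and hence $Y(x)$, being the underlying space of a connective spectrum with $\U$-small homotopy groups, is essentially $\U$-small.

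The step I expect to be the main obstacle is this last one. Because $\PS$ is only a reflective subcategory of the presheaf \cat, the evaluation $Y \mapsto Y(x)$ does not preserve finite colimits, so one cannot run the induction directly on the values. The identification $\pi_n(Y(x)) \cong (\pi_n Y)(x)$ is precisely what transports the question into the abelian category $\Mod$, where the long exact sequence and the closure of $\U$-smallness under subquotients and extensions make the induction succeed.
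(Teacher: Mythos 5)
Your argument is correct, and its overall architecture coincides with the paper's: both proofs reduce local $\U$-smallness to the $\U$-smallness of homotopy groups, then run a double induction over the generation of the category from the representables, with base case supplied by the local $\U$-smallness of $\E$ via the Yoneda Lemma and inductive step supplied by a long exact sequence together with the closure of $\U$-small abelian groups under subquotients and extensions. The difference lies in how the induction over the target $Y$ is organized. The paper never leaves $\SW(\PSf)$: it invokes \CIS{Cor.~5.7.9} to reduce to $\pi_0(\hom(X,Y))$, fixes $x \in \E$, and inducts on $Y$ over suspensions and extensions, using \Cref{prop:E_is_projective} to dispose of $\pi_0(\hom(\hat{x},\Sigma^n \hat{y}))$ for $n>0$ in the base case; a second, symmetric induction then handles general $X$. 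You instead peel off the Spanier--Whitehead construction first by a filtered-colimit argument, reduce $X$ to a representable using finite colimits in the first variable, identify $\map(\hat{x},Y)\simeq Y(x)$, and then control $Y(x)$ through the homotopy objects $(\pi_n Y)(x)$ in the abelian category $\Mod$, relying on the identification $\pi_n(Y(x))\cong(\pi_n Y)(x)$ from \Cref{key_input} and the long exact sequence in $\Mod$. Your route is somewhat longer but has the virtue of staying entirely in the connective world, so no appeal to the vanishing of negative homotopy groups of representables is needed, and it makes explicit that the smallness is ultimately detected pointwise in $\Mod$; the paper's version is more economical and treats source and target uniformly. One cosmetic remark: in your second reduction you assert that the class of $X$ with essentially $\U$-small $\map(X,Y)$ ``contains the representables'' before that fact has been established---the intended logic (closure under finite colimits reduces the problem to $X=\hat{x}$) is sound, but the sentence should be phrased conditionally.
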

\begin{proof}
    By virtue of \CIS{Cor.~5.7.9} and because
    \begin{equation*}
        \pi_{n}(\hom(X, Y)) \cong \pi_0(\hom(\Sigma^n X, Y)) \eqcomma
    \end{equation*}
    it is sufficient to prove that, for every pair of objects
    $X, Y \in \SW(\PSf)$, the set $\pi_0(\hom(X, Y))$ is $\U$-small.
    Fix $x \in \E$ and consider the full subcategory of $\SW(\PSf)$ spanned 
    by those objects $Y$ for which $\pi_0(\hom(\hat{x}, \Sigma^n Y))$ is $\U$-small for all $n \in \Z$.
    This subcategory is clearly closed under suspensions and extensions and contains $\E$ by assumption
    and \Cref{prop:E_is_projective}.
    As $\SW(\PSf)$ is generated by
    $\E$ under suspensions and extensions, this shows that
    $\pi_0(\hom(\hat{x}, Y))$
    is $\U$-small for all $x \in \E, Y \in \PSf$.

    Consider the full subcategory of $\PSf$ spanned by those objects $X$ for 
    which $\pi_0(\hom(X, Y))$ is
    $\U$-small for all $Y \in \PSf$.
    By an analogous argument, this subcategory is $\PSf$ itself, which finishes the proof.
\end{proof}

\begin{lemma}\label{lemma:PSf_preserves_smallness}
    The Spanier--Whitehead \cat $\SW(\PSf)$ is essentially $\V$-small.
\end{lemma}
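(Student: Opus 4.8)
The strategy is to verify the two standard conditions for essential $\V$-smallness separately: local $\V$-smallness, and the existence of a $\V$-small set of equivalence classes of objects (see \HTT{Prop.~5.4.1.2}). Since $\E$ is $\V$-small it is in particular locally $\V$-small, so \Cref{prop:SWPSf_locally_small} applied with $\U = \V$ already yields that $\SW(\PSf)$ is locally $\V$-small. Thus the only remaining task is to bound the equivalence classes of objects.

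First I would reduce the problem from $\SW(\PSf)$ to $\PSf$. As $\SW(\PSf)$ is by construction the sequential colimit of the tower $\PSf \xrightarrow{\Sigma} \PSf \xrightarrow{\Sigma} \cdots$, every object arises from some stage, and unwinding the colimit cocone shows that every object is equivalent to $\Sigma^{-n} X$ for some $X \in \PSf$ and some $n \in \N$. Hence the equivalence classes of objects of $\SW(\PSf)$ are indexed by a quotient of (the equivalence classes of objects of $\PSf$) $\times \N$, and it suffices to prove that $\PSf$ has a $\V$-small set of equivalence classes of objects.

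For this I would exhibit $\PSf$ as an $\omega$-indexed union. Set $\mathcal{C}_0 = \E$ and let $\mathcal{C}_{n+1}$ be the full subcategory of $\PS$ spanned by the colimits of finite diagrams valued in $\mathcal{C}_n$. Because a finite simplicial set has only finitely many vertices and each $\mathcal{C}_n$ is full, any finite diagram in $\bigcup_n \mathcal{C}_n$ already factors through some $\mathcal{C}_n$; consequently $\bigcup_n \mathcal{C}_n$ is closed under finite colimits and, being visibly the smallest such subcategory of $\PS$ containing $\E$, coincides with $\PSf$. Now I would argue by induction: $\mathcal{C}_0 = \E$ is $\V$-small, and if $\mathcal{C}_n$ is essentially $\V$-small then — using that it is moreover locally $\V$-small, so that $\Fun(K, \mathcal{C}_n)$ has a $\V$-small set of objects for each finite $K$ — the finite diagrams in $\mathcal{C}_n$ form a $\V$-small collection, whence their colimits yield only $\V$-small-many equivalence classes and $\mathcal{C}_{n+1}$ is again essentially $\V$-small. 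Taking the union over the $\V$-small index set $\N$ keeps the set of equivalence classes $\V$-small, which completes the argument.

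The only point that requires genuine care — and hence the main obstacle — is the passage through the finite-colimit completion: one must check both that closing $\E$ under finite colimits inside $\PS$ is achieved after $\omega$ stages (so that the union stays $\V$-small) and that at each stage the relevant finite diagrams, and therefore their colimits, are parametrised by a $\V$-small set. This is precisely where local $\V$-smallness from \Cref{prop:SWPSf_locally_small} enters. Everything else is bookkeeping, and I would remark that this argument merely reflects the general fact, implicit in the construction of \HTT{Prop.~5.3.6.2}, that the free finite cocompletion of an essentially $\V$-small \cat is again essentially $\V$-small.
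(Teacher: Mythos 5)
Your proof is correct and follows essentially the same route as the paper: both combine local $\V$-smallness from \Cref{prop:SWPSf_locally_small} (via \HTT{Prop.~5.4.1.2}, resp.\ \CIS{Prop.~5.7.6}) with a count of equivalence classes of objects coming from the fact that $\SW(\PSf)$ is generated from $\E$ in countably many stages of finite operations. The paper simply asserts the object count as clear ``as $\SW(\PSf)$ is generated by $\E$ under finite suspensions and extensions,'' whereas you carry out the same count explicitly via the stage-wise finite-colimit filtration of $\PSf$ followed by desuspension; this is a more detailed rendering of the same argument rather than a different one.
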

\begin{proof}
    By \CIS{Prop.~5.7.6} and \Cref{prop:SWPSf_locally_small}, it suffices to show that the set of
    isomorphism classes is $\V$-small. This is clear, as $\SW(\PSf)$ is generated by $\E$
    under finite suspensions and extensions.
\end{proof}

\begin{prop}
    The stable \cat $\she$ is essentially $\V$-small.
\end{prop}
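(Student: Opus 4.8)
The plan is to verify the two conditions in \CIS{Prop.~5.7.6}, namely that $\she$ has a $\V$-small set of equivalence classes of objects and that it is locally $\V$-small. The first condition is immediate: the localization functor $\SW(\PSf) \rightarrow \she$ is essentially surjective, so the equivalence classes of objects of $\she$ form a quotient of those of $\SW(\PSf)$, and the latter set is $\V$-small by \Cref{lemma:PSf_preserves_smallness}. The substance of the proof therefore lies in establishing local $\V$-smallness.

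For this, I would first reduce to the homotopy category. Since $\she$ is stable, its mapping spaces are connective, and for every pair of objects $X, Y$ and every $n \geq 0$ there is an isomorphism
\begin{equation*}
	\pi_n(\map_{\she}(X, Y)) \cong \pi_0(\map_{\she}(\Sigma^n X, Y)) \cong \hom_{\ho(\she)}(\Sigma^n X, Y) \eqdot
\end{equation*}
By \CIS{Cor.~5.7.9}, it thus suffices to show that every Hom-set of the homotopy category $\ho(\she)$ is $\V$-small.

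To compute these Hom-sets I would invoke, exactly as in the proof of \Cref{prop:connective_inclusion_ff}, that $\ho(\she)$ is the $1$-categorical localization of $\ho(\SW(\PSf))$ at the quasi-isomorphisms and that its morphism sets admit a left calculus of fractions. Concretely, $\hom_{\ho(\she)}(X, Y)$ is a quotient of the set of roofs $X \rightarrow Y' \xleftarrow{\sim} Y$ with $Y' \in \SW(\PSf)$ and the backward map a quasi-isomorphism. Such a roof is determined by an equivalence class of the object $Y'$ together with two morphisms in $\ho(\SW(\PSf))$. Because $\SW(\PSf)$ is essentially $\V$-small by \Cref{lemma:PSf_preserves_smallness} and locally $\V$-small by \Cref{prop:SWPSf_locally_small} applied with $\U = \V$ (using that $\E$ is $\V$-small, hence locally $\V$-small), all of this data ranges over $\V$-small sets. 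Hence the set of roofs is $\V$-small, and so is its quotient. The only point requiring care is precisely that the indexing data of the calculus of fractions stays within a $\V$-small range; this is guaranteed by the combination of essential and local $\V$-smallness of $\SW(\PSf)$, so that no smallness input beyond \Cref{lemma:PSf_preserves_smallness} and \Cref{prop:SWPSf_locally_small} is needed.
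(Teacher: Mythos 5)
Your proof is correct and follows essentially the same route as the paper: reduce to the homotopy category via $\pi_n(\hom(X,Y)) \cong \pi_0(\hom(\Sigma^n X, Y))$, identify $\ho(\she)$ as the Verdier quotient of $\ho(\SW(\PSf))$ by the acyclics, and deduce smallness from \Cref{lemma:PSf_preserves_smallness}. The only difference is that where the paper cites Neeman's result that the Verdier quotient of a small triangulated category is small, you reprove it inline by bounding the set of roofs in the calculus of fractions, which is a valid (and self-contained) substitute.
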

\begin{proof}
    As $\pi_n(\hom(X, Y)) \cong \pi_0(\hom(\Sigma^n X, Y))$, this may be checked
    on the homotopy category, which is the Verdier quotient of $\ho(\SW(\PSf))$ by the 
    acyclics. As the Verdier quotient of a small triangulated category is small \cite[Prop.~2.2.1]{neeman_triangulated_2001},
    this follows directly from \Cref{lemma:PSf_preserves_smallness}.
\end{proof}

\begin{remark}
    The
    construction $\E \rightarrow \she$ does \textit{not} preserve local smallness.
    Indeed, in \Cref{prop:onecat_she} we show that in the case of ordinary exact categories,
    there is a canonical equivalence $\dbe \eqarrow \she$
    and the construction $\E \rightarrow \dbe$ does not preserve smallness \cite[Ex.~6.2.A]{freyd_1964}.
\end{remark}
\subsection{Properties of the stable hull}\label{sec:props_of_stable_hull}
First, we establish the universal property of $\she$, which implies that
the construction $\E \rightarrow \she$ can be organized into a left adjoint to the inclusion
$\catstable \hookrightarrow \catexact$. 
 \begin{lemma}\label{prop:local_pqi_qi}
	Every object $T \in \PS$ which is local to the primitive quasi-isomorphisms is local to all quasi-isomorphisms.
\end{lemma}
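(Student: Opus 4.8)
The plan is to rephrase both locality conditions as orthogonality conditions on the \emph{cofibers} of the morphisms involved, and then to run a generation argument powered by the description of acyclic objects. First I would record the elementary reformulation: for a morphism $w$ in $\SW(\PSf)$, the object $T$ is local to $w$ exactly when $\map(\cofib(w), T)$ is trivial, since $\map(-,T)$ turns the cofiber sequence of $w$ into a fiber sequence. Under this dictionary the primitive quasi-isomorphism associated to an exact sequence has, by \Cref{remark:primitive_acyclic_effaceable}, the corresponding primitive acyclic object as its cofiber, and a general quasi-isomorphism has an acyclic cofiber by definition. So the assertion becomes: if $\map(P, T)$ is trivial for every primitive acyclic $P$, then $\map(C, T)$ is trivial for every acyclic $C$.

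With this in hand the core is a closure argument. I would introduce the full subcategory $\mathcal{N} = \{X \in \SW(\PSf) : \map(X, T) \simeq 0\}$ of objects invisible to $T$. Because $\map(-, T)$ sends cofiber sequences to fiber sequences and satisfies $\map(\Sigma^{\pm 1} X, T) \simeq \Sigma^{\mp 1}\map(X, T)$, the class $\mathcal{N}$ is automatically closed under positive and negative suspensions and under cofibers; by hypothesis it contains every primitive acyclic object. Since \Cref{def:acyclics} characterises the acyclic objects as precisely the smallest such class — the stable closure of the primitive acyclics under suspensions and cofibers — this formally forces $\mathcal{N}$ to contain all acyclic objects, which is the desired conclusion. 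Translating back along the cofiber dictionary then shows $T$ is local to every quasi-isomorphism.

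The step that requires genuine care, and which I expect to be the main obstacle, is checking that the hypothesis really yields the honest vanishing $\map(P, T) \simeq 0$ and not merely the vanishing of its connective part, i.e.\ controlling the negative homotopy groups of the relevant mapping objects. This is exactly where the assumption $T \in \PS$ enters. By \Cref{prop:E_is_projective} each $\map(\hat{x}, T)$ with $x \in \E$ is connective, so the mapping objects out of the representables appearing in the defining cofiber sequence $\cofib(\hat{i}) \to \hat{z} \to P$ are under control; moreover $P$ lies in the heart and is effaceable by \Cref{remark:primitive_acyclic_effaceable}, and by \Cref{prop:acyclics_built_from_primitive_acyclics} every acyclic object has effaceable homotopy groups. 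I would therefore organise the argument so that acyclic objects are reached from the primitive ones using only operations under which the relevant vanishing is stable (extensions of positive suspensions of their effaceable homotopy groups, via the Postnikov filtration and \Cref{cor:two_out_of_three_primitive_acyclics}), so that the connectivity of $T$ is never violated. Once the bookkeeping of homotopy degrees is set up correctly, the remaining generation step is purely formal.
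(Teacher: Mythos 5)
Your final plan (the third paragraph) is essentially the paper's proof: boost the locality hypothesis to the vanishing of $\map(A,\Sigma T)$ for every primitive acyclic $A$, using the long exact sequence of the cofiber sequence $\cofib(\hat{i})\to\hat{z}\to A$ together with \Cref{prop:E_is_projective}, then propagate this vanishing to all acyclic objects of $\PSf$ via their Postnikov filtrations (iterated extensions of positive suspensions of their effaceable homotopy groups, by \Cref{prop:acyclics_built_from_primitive_acyclics}), and finally translate back through the long exact sequence associated to a quasi-isomorphism. However, the ``core'' closure argument of your second paragraph cannot be executed as stated: if $\mathcal{N}=\{X:\map(X,T)\simeq 0\}$ is taken at the level of mapping \emph{spaces} it is not closed under $\Sigma^{-1}$, while at the level of mapping \emph{spectra} the primitive acyclics do not belong to $\mathcal{N}$ --- even after the boost, the hypothesis only yields $\pi_m\hom(A,T)=0$ for $m\geq -1$, and there is no reason for the Ext-type groups $\pi_m\hom(A,T)\cong \pi_0\hom(A,\Sigma^{-m}T)$ with $m\leq -2$ to vanish. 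For the same reason, the ``honest vanishing $\map(P,T)\simeq 0$'' you aim for should be replaced by the weaker, correct target $\map(P,\Sigma T)\simeq 0$. This weaker vanishing is preserved only by extensions and \emph{positive} suspensions, never negative ones; that is enough because the quasi-isomorphisms relevant to the statement are morphisms of $\PSf$, so their cofibers are connective bounded acyclics and are reached from the primitive ones without negative suspensions --- which is exactly how the paper organises the argument.
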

\begin{proof}
	Let $T \in \PS$ be local to the primitive quasi-isomorphisms. Then, the mapping space $\map(A, \Sigma T)$ is 
	trivial for every primitive acyclic object $A$: 
	Let $\ses$ be the exact sequence in $\E$ corresponding to a fixed primitive acyclic object
	$A$. Then there is a Serre long exact sequence
	\begin{equation*}
		\dots \rightarrow \pi_n(\map(\hat{z}, \Sigma T)) \rightarrow
		\pi_n(\cofib(i), \Sigma T) \rightarrow
		\pi_{n - 1}(\map(A, \Sigma T)) \rightarrow
		\cdots \eqdot
	\end{equation*}
	The morphism $\pi_n(\map(\hat{z}, \Sigma T)) \rightarrow \pi_n(\map(\cofib(i), \Sigma T))$ is an isomorphism for 
	$n > 0$ by assumption, while $\pi_0(\map(\hat{z}, \Sigma T))$ is trivial by Proposition 
	\ref{prop:E_is_projective}.

	Now using
	\begin{itemize}
		\item Proposition \ref{prop:acyclics_built_from_primitive_acyclics},
		\item the fact that in $\PS \subseteq \SW(\PS)$, every bounded object arises as an iterated extension of its homotopy groups, and
		\item that the class of objects $Y \in \PS$ for which
			$\map(Y, \Sigma X)$ is trivial is closed under extensions,
	\end{itemize}
	we conclude that 
	$\map(X, \Sigma T)$ is trivial for every acyclic object $X \in \PSf$.
	As the cofiber of every quasi-isomorphism $w\colon Y \rightarrow Z$ is acyclic, the claim follows
	from the corresponding Serre long exact sequence
	\begin{equation*}
		\dots \rightarrow \pi_n(\map(Z, T)) \rightarrow
		\pi_n(\map(Y, T)) \rightarrow
		\pi_{n-1}(\map(\cofib(w), T)) \rightarrow \cdots \eqdot \qedhere
	\end{equation*}
\end{proof}
\begin{lemma}\label{lemma:ps_send_primitive_to_isos_send_all_to_isos}
	Let $\D$ be an \cat which admits small colimits. A cocontinuous functor 
	$\PS \rightarrow \D$ sends primitive quasi-isomorphisms to equivalences if and only if it sends all
	quasi-isomorphisms to equivalences.
\end{lemma}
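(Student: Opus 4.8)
The plan is to deduce the statement from the localization theory of presentable \cats, using \Cref{prop:local_pqi_qi} as its essential input. One implication requires no work: the cofiber of a primitive quasi-isomorphism is a primitive acyclic object and in particular acyclic, so every primitive quasi-isomorphism is itself a quasi-isomorphism, and a functor inverting all quasi-isomorphisms \emph{a fortiori} inverts the primitive ones. The substance is the converse, which I would organize around the strongly saturated class generated by the primitive quasi-isomorphisms. Write $S$ for the class of primitive quasi-isomorphisms; since $\E$ is essentially small, the exact sequences in $\E$---and hence the morphisms in $S$---form a small set up to equivalence. As $\PS$ is presentable, being a reflective localization of the presheaf \cat on $\E$, the theory of \HTT{\S 5.5.4} identifies the strongly saturated class $\overline{S}$ generated by $S$ with the class of $S$-local equivalences, namely those morphisms $w$ for which $\map(w, T)$ is an equivalence for every object $T$ local to $S$.

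First I would show that every quasi-isomorphism lies in $\overline{S}$. By the characterization just recalled, it is enough to verify that $\map(w, T)$ is an equivalence whenever $w$ is a quasi-isomorphism and $T \in \PS$ is local to the primitive quasi-isomorphisms. This is precisely the content of \Cref{prop:local_pqi_qi}, which guarantees that such a $T$ is local to all quasi-isomorphisms; hence every quasi-isomorphism is an $S$-local equivalence and therefore belongs to $\overline{S}$.

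Next I would pass to the functor. Given a cocontinuous functor $F\colon \PS \rightarrow \D$ inverting every morphism of $S$, consider the class $W$ of morphisms $w$ in $\PS$ for which $F(w)$ is an equivalence in $\D$. The equivalences of $\D$ form a strongly saturated class, and since $F$ preserves pushouts while the induced functor $\Fun(\Delta^1, \PS) \rightarrow \Fun(\Delta^1, \D)$ preserves small colimits, $W$ inherits the three defining properties of such a class: it is closed under cobase change, satisfies the two-out-of-three property, and is stable under small colimits in the arrow \cat. Hence $W$ is strongly saturated and contains $S$, so that $\overline{S} \subseteq W$. Together with the previous step this shows that $F$ inverts every quasi-isomorphism, which is the desired conclusion.

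The main obstacle is the bridge between the two incarnations of locality: \Cref{prop:local_pqi_qi} is a statement about objects, whereas the lemma concerns a functor. The identification of $\overline{S}$ with the $S$-local equivalences is exactly what converts the object-level input into a morphism-level conclusion, and it is for this identification that presentability of $\PS$ and smallness of $S$ are required. By comparison, the verification that $W$ is strongly saturated---in particular its stability under colimits in the arrow \cat---is a routine consequence of the cocontinuity of $F$.
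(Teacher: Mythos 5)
Your proof is correct and takes essentially the same route as the paper: both arguments feed \Cref{prop:local_pqi_qi} into the localization theory of presentable \cats from \HTT{\S 5.5.4}, the paper phrasing this via the cocontinuous localization $L_q(\PS)$ and its local objects, and you phrasing it via the strongly saturated class generated by the primitive quasi-isomorphisms. The two formulations are interchangeable, and your explicit verification that the preimage of the equivalences under a cocontinuous functor is strongly saturated is the standard fact the paper leaves implicit.
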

\begin{proof}
	Let
	\begin{equation*}
		\begin{tikzcd}
			\PS \ar[r, shift left=0.7ex]  \ar[r, hookleftarrow, shift right=0.7ex, "i"']&L_q(\PS)
		\end{tikzcd}
	\end{equation*}
	be the cocontinuous localization of $\PS$ along the
	quasi-isomorphisms. As $\PS$ is presentable, $i$ is the inclusion of the full subcategory
	of $\PS$ spanned by those objects which are local to quasi-isomorphisms.
	Hence, by \Cref{prop:local_pqi_qi}, this localization is also the cocontinuous localization of
	$\PS$ along the class of the primitive quasi-isomorphisms.
	Therefore, a cocontinuous functor $\PS \rightarrow \D$ sends quasi-isomorphisms to equivalences
	if and only if it is equivalent to the restriction of some cocontinuous functor $L_q(\PS) \rightarrow \D$
	if and only if it sends primitive quasi-isomorphisms to equivalences.
\end{proof}
\begin{prop}\label{prop:psf_send_primitive_to_isos_send_all_to_isos}
	Let $\D$ be an \cat which admits finite colimits. A right exact functor
	$\PSf \rightarrow \D$ sends primitive quasi-isomorphisms to equivalences if and only if it sends all
	quasi-isomorphisms to equivalences.
\end{prop}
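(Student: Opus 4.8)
The plan is to deduce the proposition from its cocontinuous counterpart, \Cref{lemma:ps_send_primitive_to_isos_send_all_to_isos}, by left Kan extending along the inclusion $\PSf \hookrightarrow \PS$. One direction is immediate: by \Cref{primitive_acyclic_old_def} the cofiber of a primitive quasi-isomorphism is a primitive acyclic object, which is acyclic by \Cref{def:acyclics}, so every primitive quasi-isomorphism is a quasi-isomorphism. Hence a right exact functor inverting all quasi-isomorphisms inverts the primitive ones, and only the converse requires work.

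Assume then that $F \colon \PSf \rightarrow \D$ is right exact and sends primitive quasi-isomorphisms to equivalences. The obstruction to extending $F$ along $\PSf \hookrightarrow \PS$ is that $\D$ only admits \emph{finite} colimits, while objects of $\PS$ are assembled from $\E$ by arbitrary sifted colimits; I circumvent this by passing to an Ind-completion. As $\PSf$ is essentially $\V$-small, being a full subcategory of the essentially $\V$-small \cat $\SW(\PSf)$ of \Cref{lemma:PSf_preserves_smallness}, I may replace $\D$ by the smallest full subcategory containing the essential image of $F$ and closed under finite colimits, and so assume $\D$ to be essentially $\V$-small. Let $\tilde{\D} \vcentcolon= \mathrm{Ind}(\D)$ and let $j \colon \D \hookrightarrow \tilde{\D}$ be the Yoneda embedding; it is fully faithful and preserves finite colimits (see \HTT{Sec.~5.3.5}), and $\tilde{\D}$ is presentable (\HTT{Prop.~5.5.1.1}), hence admits all small colimits. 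Consequently $\tilde{F} \vcentcolon= j \circ F$ is again right exact and sends primitive quasi-isomorphisms to equivalences.

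The inclusion $\E \hookrightarrow \PSf$ preserves finite coproducts, so since $\tilde{F}$ is right exact, the restriction $\tilde{F}|_{\E} \colon \E \rightarrow \tilde{\D}$ preserves finite coproducts. By the universal property of $\PS = \Pres_\Sigma(\E)$ as the free cocompletion of $\E$ under small colimits respecting finite coproducts (\HTT{Prop.~5.5.8.15}), this restriction extends to a cocontinuous functor $\hat{F} \colon \PS \rightarrow \tilde{\D}$. The functor $\hat{F}|_{\PSf}$ is right exact, because $\PSf$ is closed under finite colimits in $\PS$, and it agrees with $\tilde{F}$ on $\E$; by the uniqueness in the equivalence $\Fun^\text{rex}(\PSf, \tilde{\D}) \eqarrow \Fun^\Sigma(\E, \tilde{\D})$ we obtain $\hat{F}|_{\PSf} \simeq \tilde{F}$. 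In particular $\hat{F}$ is a cocontinuous functor sending primitive quasi-isomorphisms to equivalences, so \Cref{lemma:ps_send_primitive_to_isos_send_all_to_isos} shows that $\hat{F}$ inverts every quasi-isomorphism in $\PS$.

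It remains to transport this conclusion back along $j$. Every quasi-isomorphism $w$ in $\PSf$ is also a quasi-isomorphism in $\PS$: its cofiber, computed in the finite-colimit-closed subcategory $\PSf$, coincides with its cofiber in $\PS$ and is acyclic by hypothesis. Therefore $j(F(w)) \simeq \tilde{F}(w) \simeq \hat{F}(w)$ is an equivalence, and since $j$ is fully faithful and hence conservative, $F(w)$ is an equivalence. The main obstacle is exactly the restriction to finite colimits in $\D$; once the Ind-completion removes it, the remaining delicate point is to check that the cocontinuous extension $\hat{F}$ restricts back to $\tilde{F}$ on $\PSf$, which is where the universal properties of $\PS$ and of $\PSf$ are played against one another.
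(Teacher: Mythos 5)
Your proof is correct and takes essentially the same route as the paper: pass to a cocomplete target, use the fact that both right exact functors on $\PSf$ and cocontinuous functors on $\PS$ are determined by their common restriction to $\E$ (so that $\hat{F}|_{\PSf}\simeq \tilde{F}$), and then invoke \Cref{lemma:ps_send_primitive_to_isos_send_all_to_isos}. The only difference is cosmetic: the paper embeds $\D$ into $\Pres(\D\op)\op$ instead of $\mathrm{Ind}(\D)$, which makes your preliminary smallness reduction unnecessary.
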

\begin{proof}
	By embedding $\D$ into $\Pres(\D\op)\op$, we can assume $\D$ to admit all small colimits. Then,
	both right exact functors $\PSf \rightarrow \D$ and colimit-preserving functors
	$\PS \rightarrow \D$ correspond to functors $\E \rightarrow \D$ which preserve finite coproducts.
	Therefore,
	the inclusion $\PSf \hookrightarrow \PS$ induces an equivalence
	\begin{equation*}
		\Fun^!(\PS, \D)) \eqarrow \Fun^\text{rex}(\PSf, \D) \eqdot
	\end{equation*}
	Here $\Fun^!(\PS, \D)$ denotes the full subcategory of $\Fun(\PS, \D))$ spanned by the cocontinuous functors. 
	The conclusion now follows directly from \Cref{lemma:ps_send_primitive_to_isos_send_all_to_isos}.
\end{proof}
\begin{prop}\label{prop:universal_prop_dbge}
	Let $\D$ be an \cat which admits finite colimits. Write $\Fun^{\Sigma, \text{seq}}(\E, \D)$ for the full subcategory
	of $\Fun(\E, \D)$ spanned by those functors which preserve finite coproducts and send exact sequences
	to pushout squares. Then, restriction along the composition
	\begin{equation*}
		\E \hookrightarrow \PSf \rightarrow \shge
	\end{equation*}
	induces an equivalence of \cats
	\begin{equation*}
		\Fun^\text{rex}(\shge, \D) \eqarrow \Fun^{\Sigma, \text{seq}}(\E, \D) \eqdot
	\end{equation*}
\end{prop}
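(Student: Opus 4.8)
The plan is to assemble the statement from three results proved above: the universal property of $\PSf$, the localization equivalence of \Cref{prop:cofibrant_localization}, and the reduction from all quasi-isomorphisms to the primitive ones furnished by \Cref{prop:psf_send_primitive_to_isos_send_all_to_isos}. Since restriction along the composite $\E \hookrightarrow \PSf \rightarrow \shge$ factors as restriction along $\PSf \rightarrow \shge$ followed by restriction along $\E \hookrightarrow \PSf$, it suffices to chain the corresponding equivalences and match the relevant full subcategories at each stage.

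First I would apply \Cref{prop:cofibrant_localization} to obtain the equivalence
\[
    \Fun^\text{rex}(\shge, \D) \eqarrow \Fun^\text{rex}_\text{q-i}(\PSf, \D),
\]
where the target is the full subcategory of right exact functors $\PSf \rightarrow \D$ that invert every quasi-isomorphism. By \Cref{prop:psf_send_primitive_to_isos_send_all_to_isos}, a right exact functor inverts all quasi-isomorphisms if and only if it inverts the primitive ones, so this target coincides with the full subcategory of right exact functors sending primitive quasi-isomorphisms to equivalences. Restricting along $\E \hookrightarrow \PSf$ and using the universal property of $\PSf$ identifies right exact functors $\PSf \rightarrow \D$ with functors $\E \rightarrow \D$ preserving finite coproducts, so it remains to translate the condition of inverting primitive quasi-isomorphisms into a condition on the restricted functor.

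This translation is the only delicate point. A primitive quasi-isomorphism is the canonical map $\cofib(\hat{i}) \rightarrow \hat{z}$ attached to an exact sequence $\ses$. For a right exact functor $F\colon \PSf \rightarrow \D$ with restriction $f = F|_\E$, right exactness gives $F(\cofib(\hat{i})) \simeq \cofib(f(i))$ and $F(\hat{z}) \simeq f(z)$, and $f$ preserves zero objects since it preserves finite coproducts; hence $F$ sends this primitive quasi-isomorphism to the comparison map $\cofib(f(i)) \rightarrow f(z)$. This map is an equivalence exactly when $f$ carries the defining bicartesian square \eqref{diag:exact_seq} to a pushout square in $\D$. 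Therefore $F$ inverts all primitive quasi-isomorphisms if and only if $f$ lies in $\Fun^{\Sigma, \text{seq}}(\E, \D)$, and composing the three equivalences yields the claim. I expect the argument to be essentially bookkeeping, with the preservation of zero objects and cofibers under the right exact functor being the step that makes the condition-matching precise.
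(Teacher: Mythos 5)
Your proposal is correct and follows essentially the same route as the paper: chain the equivalence of \Cref{prop:cofibrant_localization}, the reduction to primitive quasi-isomorphisms from \Cref{prop:psf_send_primitive_to_isos_send_all_to_isos}, and the universal property of $\PSf$. The only difference is that you spell out the final condition-matching step (that a right exact functor inverts the primitive quasi-isomorphism $\cofib(\hat{i}) \rightarrow \hat{z}$ exactly when its restriction sends the corresponding exact sequence to a pushout), which the paper asserts without elaboration; your justification of it is sound.
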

\begin{proof}
	By \Cref{prop:cofibrant_localization}, restriction along the functor $\PSf \rightarrow \shge$ induces an
	equivalence of \cats
	\begin{equation*}
		\Fun^\text{rex}(\shge, \D) \eqarrow
			\Fun^\text{rex}_\text{q-i}(\PSf, \D)
	\end{equation*}
	As we have shown in \Cref{prop:psf_send_primitive_to_isos_send_all_to_isos}, the codomain of this equivalence agrees
	with
		$\Fun^\text{rex}_\text{p q-i}(\PSf, \D)$, the full subcategory of $\Fun(\PSf, \D)$
		spanned by the right exact functors
		which send primitive quasi-isomorphisms to equivalences. Also, the 
	equivalence from Proposition \ref{def:cbge} restricts to an equivalence
	\begin{equation*}
		\Fun^\text{rex}_\text{p q-i}(\PSf, \D) \eqarrow
			\Fun^{\Sigma, \text{seq}}(\E, \D) \eqdot
	\end{equation*}
	Composing these equivalences yields the result.
\end{proof}
\begin{prop}\label{prop:she_ff}
	The functor $\E \rightarrow \she$ is fully faithful.
\end{prop}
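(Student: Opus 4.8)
The plan is to reduce the statement to a mapping-space computation in the localization and then to settle that computation via a filtered colimit governed by effaceability. Since $\E \hookrightarrow \PSf \hookrightarrow \SW(\PSf)$ is fully faithful with $\map_{\SW(\PSf)}(\hat{x}, \hat{y}) \simeq \map_{\E}(x,y)$ by the Yoneda lemma, and since the mapping space in $\she$ is the underlying space of the mapping spectrum, it suffices to show that the localization $\SW(\PSf) \rightarrow \she$ induces an equivalence on connective covers of $\map_{\SW(\PSf)}(\hat{x}, \hat{y}) \rightarrow \map_{\she}(\hat{x}, \hat{y})$, i.e. an isomorphism on $\pi_n$ for all $n \geq 0$. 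Here the source is already connective: $\hat{y}$ is connective and $\hat{x}$ is projective by \Cref{prop:E_is_projective}. (Alternatively, one may use \Cref{prop:connective_inclusion_ff} to work throughout in $\shge$, keeping everything connective from the outset.)

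First I would describe the target as a filtered colimit. As $\she$ is the Verdier quotient of $\SW(\PSf)$ by the acyclic objects, the Nikolaus--Scholze description of mapping spectra together with the left calculus of fractions used in \Cref{prop:connective_inclusion_ff} presents $\map_{\she}(\hat{x}, \hat{y})$ as a filtered colimit $\colim_{s} \map_{\SW(\PSf)}(\hat{x}, Y')$ over quasi-isomorphisms $s\colon \hat{y} \rightarrow Y'$; by \Cref{prop:quasi_iso_right_adjoint} and \Cref{prop:acyclics_right_adjoint} the quasi-isomorphisms with connective $Y' \in \PSf$ are cofinal, so I may assume $Y'$ connective. For each such $s$ the cofiber sequence $\hat{y} \rightarrow Y' \rightarrow A_s$ has connective acyclic cofiber $A_s$, and applying $\map(\hat{x}, -)$ yields a fiber sequence of connective spectra. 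Since $\hat{y}$ is constant in $s$ and filtered colimits of spectra are exact, passing to the colimit gives a fiber sequence $\map(\hat{x}, \hat{y}) \rightarrow \map_{\she}(\hat{x}, \hat{y}) \rightarrow \colim_{s} \map(\hat{x}, A_s)$, in which the last term is connective. Hence the whole proposition reduces to the vanishing $\colim_{s} \map(\hat{x}, A_s) \simeq 0$.

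To prove this vanishing I would show that every homotopy class dies in the colimit. Using the projectivity of $\hat{x}$ (\Cref{prop:E_is_projective}) and \Cref{key_input} one has $\pi_n \map(\hat{x}, A_s) \cong (\pi_n A_s)(x)$, and by \Cref{prop:acyclics_built_from_primitive_acyclics} each $\pi_n A_s$ is an effaceable object of $\Mod$. Given a class $\alpha \in (\pi_n A_s)(x)$, effaceability furnishes a fibration $p\colon E \twoheadrightarrow x$ in $\E$, obtained by pulling back the defining fibration of the relevant effaceable module along a representative of $\alpha$, along which $\alpha$ restricts to zero; the exact sequence witnessing $p$ then produces a primitive quasi-isomorphism and, after a pushout, a refinement $s'$ of $s$ (again a quasi-isomorphism by \Cref{prop:qi_finitely_saturated}) along which the image of $\alpha$ is killed. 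The hard part will be making this refinement genuinely annihilate $\alpha$: effacing a class along $p$ introduces a correction term supported on the fiber of $p$, which is itself an object of $\E$, so a single effacement does not obviously suffice. I expect to resolve this by inducting on the Postnikov filtration of $A_s$ by its effaceable homotopy groups and exploiting the dimension-shifting inherent in the exact structure of $\E$ to push each effaceable layer into strictly negative degrees, where it is invisible to mapping spaces. This mirrors the classical situation of an ordinary exact category, where the defect of the embedding $\E \rightarrow \she$ is exactly the tower of effaceable functors $\mathrm{Ext}^{n}(-, y)$ sitting in degree $-n$.
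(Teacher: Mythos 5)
Your reduction is sound as far as it goes, but the proof is not complete: the decisive step is missing, and you say so yourself. Identifying $\map_{\she}(\hat{x},\hat{y})$ with $\colim_s \map_{\SW(\PSf)}(\hat{x},Y')$ and extracting the fibre sequence whose third term is $\colim_s \map(\hat{x},A_s)$ is fine (modulo a small wrinkle: the connective $Y'$ are not literally cofinal, since \Cref{prop:quasi_iso_right_adjoint} produces a map $\tau_{\geq 0}Y'\to Y'$, i.e.\ in the wrong direction; you should instead argue that $\map(\hat{x},\tau_{\geq 0}Y')\to\map(\hat{x},Y')$ is an equivalence on connective covers because $\hat{x}$ is connective). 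The genuine gap is the vanishing of $\colim_s\map(\hat{x},A_s)$ in non-negative degrees. Effaceability of $\pi_nA_s$ lets you lift a class $\alpha\in(\pi_nA_s)(x)$ along the defining epimorphism and produce a fibration $E\twoheadrightarrow x$ killing the \emph{restriction} of $\alpha$ to $\hat{E}$; that is an operation on the source. The colimit, however, is indexed by refinements of the target: maps $A_s\to A_{s'}$ of acyclics compatible with the boundary maps to $\Sigma\hat{y}$ (not every map of acyclics with acyclic cofibre qualifies --- $A_s\to 0$ always has acyclic cofibre $\Sigma A_s$, yet corresponds to a refinement only if $s$ admits a retraction). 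Nothing in your sketch converts the cover $E\twoheadrightarrow x$ into such a refinement annihilating $\alpha$ itself, and your closing sentence (``I expect to resolve this by inducting on the Postnikov filtration\dots'') is a plan rather than an argument, placed exactly where the content of the proposition lies.

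For comparison, the paper avoids this computation entirely. By \Cref{prop:universal_prop_dbge}, $\shge$ is equivalent, compatibly with the inclusion of $\E$, to the universal \cat $\Pres_\R^\K$ of \HTT{Sec.~5.3.6} obtained by freely adjoining finite colimits to $\E$ while forcing the exact sequences and the coproduct diagrams to become colimits. Since exact sequences are bicartesian squares in $\E$ and finite coproducts are biproducts, every representable $\map_\E(-,t)$ already carries the diagrams in $\R$ to limit diagrams, so $\E\to\Pres_\R^\K$ is fully faithful; composing with the fully faithful embedding $\shge\hookrightarrow\she$ of \Cref{prop:connective_inclusion_ff} concludes. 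If you wish to pursue your route, the vanishing you need is essentially the classical computation of negative Ext-groups and of $\hom_{D^b(\E)}(x,y)$ via calculus of fractions, and it requires a genuine inductive argument over the effaceable layers; as written, your proposal asserts rather than performs it.
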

\begin{proof}
	Let $\K$ be the class of all finite simplicial sets,
	and let $\R$ be the union of
	\begin{itemize}
		\item the class of exact sequences in $\E$ and
		\item the class of diagrams in $\E$ exhibiting finite coproducts.
	\end{itemize}
	\Cref{prop:universal_prop_dbge} shows that there is an equivalence $\shge \eqarrow \Pres_\R^\K$
	commuting with the inclusion of $\E$, where $\E \rightarrow \Pres_\R^\K$
	is the universal functor
	into a \cat with $\K$-indexed colimits which sends diagrams in $\R$ to colimit diagrams, see \HTT{Sec.~5.3.6}.
	As the functor $\E \rightarrow \Pres_\R^\K$ is fully faithful, so are the functor $\E \rightarrow \shge$ and the composition
	$\E \rightarrow \shge \hookrightarrow \she$.
\end{proof}
\begin{definition}[{\cite[Def.~4.1]{barwick_exact_2015}}]
	Let $\E$, $\F$ be exact \cats. We say a functor $\E \rightarrow \F$ is \emph{exact} if
	it preserves cofibrations, fibrations, zero objects, pushouts along cofibrations and pullbacks along fibrations.
\end{definition}
\begin{remark}
	As can be easily checked by considering split exact sequences,
	an exact functor between exact \cats preserves biproducts.
\end{remark}
\begin{prop}\label{prop:when_is_functor_exact}
	Let $\E$, $\F$ be exact \cats.
	A functor $\E \rightarrow \F$ which preserves finite coproducts is exact if and only if it
	sends exact sequences to exact sequences.
\end{prop}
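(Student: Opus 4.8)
The plan is to prove both implications, writing $G\colon\E\rightarrow\F$ for the functor in question and using throughout that $G$ preserves finite coproducts. For the forward implication, assume $G$ is exact and let $x\rightarrowtail y\twoheadrightarrow z$ be an exact sequence, presented as the bicartesian square with zero bottom-left corner. Since $G$ is exact it preserves the cofibration and the fibration, carries the zero object to a zero object, and---as the square is a pushout along the cofibration---sends it to a cocartesian square in $\F$. By axiom (3) of an exact \cat applied in $\F$ (the image square is cocartesian, $G$ of the cofibration is a cofibration, and the left vertical map has zero codomain, hence is a fibration by axiom (1)), the image square is also cartesian, so $G(x)\rightarrowtail G(y)\twoheadrightarrow G(z)$ is an exact sequence.

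For the converse, assume $G$ preserves finite coproducts and sends exact sequences to exact sequences; I must verify the five conditions defining an exact functor. Preservation of zero objects is immediate, as the zero object is the empty coproduct. For cofibrations, I would note that every cofibration $i\colon x\rightarrow y$ sits in an exact sequence $x\rightarrowtail y\twoheadrightarrow\cofib(i)$, obtained by pushing $i$ out along $x\rightarrow 0$ and invoking axioms (1) and (3); preservation of exact sequences then forces $G(i)$ to be a cofibration. Dually, pulling a fibration $p\colon y\rightarrow z$ back along $0\rightarrow z$ exhibits it as the fibration in an exact sequence $\fib(p)\rightarrowtail y\twoheadrightarrow z$, so that $G(p)$ is a fibration.

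The heart of the argument is preservation of pushouts along cofibrations; the pullback-along-fibration case is entirely dual. Given a pushout square whose legs are a cofibration $i\colon x\rightarrowtail y$ and an arbitrary map $f\colon x\rightarrow x'$, with new corner $y'=y\sqcup_x x'$, I would use that such a square is bicartesian in any exact \cat and apply \cite[Lem.~4.7]{barwick_exact_2015}---exactly as in the proof of \Cref{prop:eff_closed_kernel_cokern}---to repackage it as an exact sequence $x\rightarrowtail y\oplus x'\twoheadrightarrow y'$ with structure maps $\binom{i}{-f}$ and $(g\ i')$. Applying $G$ and using that it preserves this exact sequence together with the biproduct $y\oplus x'$ yields an exact sequence $G(x)\rightarrowtail G(y)\oplus G(x')\twoheadrightarrow G(y')$ in $\F$. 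Because $\F$ is additive, this exhibits $G(y')$ as the cofiber of the difference map $\binom{G(i)}{-G(f)}$, that is, as the pushout $G(y)\sqcup_{G(x)}G(x')$, with comparison maps $G(g)$ and $G(i')$; hence the image square is cocartesian. The dual case identifies $G$ of a pullback-along-fibration square with the fiber of the same kind of exact sequence. Having verified all five conditions, $G$ is exact. The only genuine obstacle is this repackaging step: it is what reduces the preservation of the two-dimensional data of a bicartesian square to the preservation of an exact sequence together with a finite coproduct, both of which hold by hypothesis.
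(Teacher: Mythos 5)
Your proof is correct, and the decisive step is carried out by a different mechanism than in the paper. The paper invokes \cite[Prop.~4.8]{barwick_exact_2015} to reduce everything to the preservation of pushout squares along cofibrations (whereas you verify zero objects, cofibrations and fibrations by hand, which is fine and makes the argument more self-contained), and then characterizes such a pushout square via \Cref{prop:big_diagram_lemma}: the square is a pushout precisely when it extends to a rectangle in which the right-hand square and the outer rectangle are exact sequences --- a characterization visibly preserved by any finite-coproduct-preserving functor that preserves exact sequences. You instead encode the pushout square as the Mayer--Vietoris exact sequence $x\rightarrowtail y\oplus x'\twoheadrightarrow y'$ of \cite[Lem.~4.7]{barwick_exact_2015}, transport that sequence and the biproduct through $G$, and then decode. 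Both routes rest on a diagram lemma converting two-dimensional pushout data into exact-sequence data; yours trades the paper's rectangle for a single sequence plus additivity of the target, and your use of Lemma~4.7 is consistent with how the paper itself uses it in the proof of \Cref{prop:eff_closed_kernel_cokern}.

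One step of your decoding deserves to be made explicit: the identification of $\cofib\binom{G(i)}{-G(f)}$ with the pushout $G(y)\sqcup_{G(x)}G(x')$ is not a purely formal consequence of the word ``additive''. It uses that mapping spaces in an additive \cat are grouplike, so that for every object $t$ the fiber of the difference map $\map(G(y),t)\times\map(G(x'),t)\rightarrow\map(G(x),t)$ agrees with the fiber product; since the transported exact sequence exhibits $G(y')$ as that cofiber, $G(y')$ then corepresents the functor the pushout would corepresent, and the image square is cocartesian. Alternatively, since you have already shown that $G(i)$ is a cofibration, the pushout $P=G(y)\sqcup_{G(x)}G(x')$ exists in $\F$; applying \cite[Lem.~4.7]{barwick_exact_2015} once more in $\F$ yields an exact sequence on the same cofibration $\binom{G(i)}{-G(f)}$ with cofiber $P$, and uniqueness of cofibers identifies $P$ with $G(y')$ compatibly with the structure maps. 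With either justification spelled out, the argument is complete.
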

\begin{proof}
	By \cite[Prop.~4.8]{barwick_exact_2015}, it is sufficient to prove that a pushout square
	in $\E$ of the form
	\begin{equation*}
	\begin{tikzcd}
	A \arrow[r] \arrow[d, tail] & A' \arrow[d, tail] \\
	B \arrow[r] &B'
	\end{tikzcd}
	\end{equation*}
	is sent to a pushout square in $\F$.
	By Proposition \ref{prop:big_diagram_lemma}, such a pushout square fits into a 
	commutative diagram
	\begin{equation*}
		\begin{tikzcd}
			A \arrow[r] \arrow[d, tail] & A' \arrow[d, tail] \arrow[r] & 0 \arrow[d] \\
			B \arrow[r] &B \arrow[r, two heads] & C
		\end{tikzcd}
	\end{equation*}
	such that both the outer rectangle and the right square are bicartesian.
	By assumption, after applying the functor $\E \rightarrow \F$ these squares are bicartesian;
	applying Proposition \ref{prop:big_diagram_lemma} again finishes the proof.
\end{proof}
\begin{corollary}\label{cor:inclusion_exact}
	The functor $\E \hookrightarrow \she$ is exact and the functor $\E \hookrightarrow \shge$ preserves exact sequences.
\end{corollary}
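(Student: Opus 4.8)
The plan is to deduce both statements from the universal property established in \Cref{prop:universal_prop_dbge}. That proposition identifies the composite $\E \hookrightarrow \PSf \rightarrow \shge$ as an object of $\Fun^{\Sigma, \text{seq}}(\E, \shge)$; in other words, the functor $\E \rightarrow \shge$ preserves finite coproducts---hence also the zero object, being the empty coproduct---and sends every exact sequence to a pushout square. The only thing left to check is that these pushout squares are genuinely \emph{exact} sequences in the relevant exact structures, and this is where the prestability of $\shge$ and the stability of $\she$ enter.

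First I would treat $\E \rightarrow \shge$. The image of an exact sequence \eqref{diag:exact_seq} is a pushout square whose lower-left vertex is the zero object, i.e.\ a cofiber sequence $F(x) \rightarrow F(y) \rightarrow F(z)$. By \Cref{cor:whitehead_dbge} the \cat $\shge$ is prestable with stabilization $\she \simeq \SW(\shge)$, and the inclusion $\shge \hookrightarrow \she$ is right exact. Thus the square remains a pushout in the stable \cat $\she$, where it is therefore also a pullback; since all four vertices already lie in $\shge$, the limit cone lies in $\shge$ and the square is bicartesian in $\shge$ as well. A bicartesian square of the form \eqref{diag:exact_seq} is exactly an exact sequence for the prestable exact structure on $\shge$ (its right-hand vertical is by construction a fibration), so $\E \rightarrow \shge$ preserves exact sequences.

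For $\E \rightarrow \she$ I would factor it as $\E \rightarrow \shge \hookrightarrow \she$ and apply \Cref{prop:when_is_functor_exact}, which reduces exactness to the preservation of finite coproducts and of exact sequences. Both composites preserve finite coproducts, the second factor being right exact, and the previous paragraph together with right exactness of $\shge \hookrightarrow \she$ shows that an exact sequence of $\E$ is carried to a cofiber sequence in the stable \cat $\she$; in the exact structure $(\she, \she, \she)$ every cofiber sequence is bicartesian with a zero vertex and hence an exact sequence. The hypotheses of \Cref{prop:when_is_functor_exact} are thus met, and $\E \rightarrow \she$ is exact.

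I expect the only non-formal point to be the verification that the pushout squares produced by \Cref{prop:universal_prop_dbge} are bicartesian, so that they qualify as exact sequences rather than merely as cofiber sequences; this hinges on passing to the stabilization $\she$, where pushouts and pullbacks coincide, and then checking that the resulting pullback is computed inside $\shge$.
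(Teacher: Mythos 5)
Your argument is correct and takes essentially the same route as the paper: both reduce to showing that $\E \rightarrow \shge$ sends exact sequences to cofiber sequences---the paper observes directly that the primitive quasi-isomorphism $\cofib(\hat{i}) \rightarrow \hat{z}$ becomes an equivalence in $\shge$, which is precisely the content you extract from \Cref{prop:universal_prop_dbge} by restricting the identity functor---and both then conclude exactness of $\E \rightarrow \she$ via \Cref{prop:when_is_functor_exact}. Your additional verification that the resulting pushout square is bicartesian (by passing to the stabilization $\she$) is sound but not needed: in the prestable exact structure on $\shge$ every morphism is a cofibration, and pushout squares along cofibrations are automatically bicartesian by \cite[Lemma~4.5]{barwick_exact_2015} (cf.\ \Cref{prop:big_diagram_lemma}).
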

\begin{proof}
	Because the three functors $\E \hookrightarrow \PSf$, $\PSf \rightarrow \SW(\PSf)$ and $\SW(\PSf) \rightarrow \she$ preserve
	finite coproducts, so does their composition.
	Hence, it suffices to prove that $\E \hookrightarrow \shge$ preserves exact sequences;
	that the functor $\E \hookrightarrow \she$ is exact
	then follows from \Cref{prop:when_is_functor_exact}.
	We need to show for that for an exact 
	sequence $\ses$,
	the induced map
	\begin{equation*}
		\cofib(\hat{i}) \rightarrow z
	\end{equation*}
	in $\shge$ is an equivalence. This map can be chosen to be the image
	of the corresponding map in $\PSf$, which is a (primitive) quasi-isomorphism.
\end{proof}
We are ready to establish the universal property of the stable hull.
\begin{prop}\label{prop:adjointness_prop_of_she}
	Let $\E$ be an exact \cat $\E$. For every stable \cat $\C$,
	restriction along the functor $\E \hookrightarrow \she$ induces an equivalence of \cats
	\begin{equation*}
		\Fun^\text{ex}(\she, \C) \rightarrow \Fun^\text{ex}(\E, \C)
	\end{equation*}
	between the full subcategories of the functor \cats spanned by those functors which are exact.
\end{prop}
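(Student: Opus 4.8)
The plan is to realize the stated equivalence as a composite of universal properties already established, exploiting that a stable \cat $\C$ is in particular a pointed \cat admitting finite colimits. First I would invoke \Cref{cor:whitehead_dbge}, which identifies $\she$ with the Spanier--Whitehead \cat $\SW(\shge)$ of the prestable \cat $\shge$. Since $\shge$ is prestable, it is pointed and admits finite colimits, so the universal property of the Spanier--Whitehead \cat \SAG{Prop.~C.1.1.7} applies and restriction along $\shge \rightarrow \she$ induces an equivalence
\[
    \Fun^\text{ex}(\she, \C) \eqarrow \Fun^\text{rex}(\shge, \C) \eqdot
\]

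Next I would apply \Cref{prop:universal_prop_dbge} with $\D = \C$, which is legitimate because $\C$ has finite colimits. Restriction along the composite $\E \hookrightarrow \PSf \rightarrow \shge$ then yields an equivalence
\[
    \Fun^\text{rex}(\shge, \C) \eqarrow \Fun^{\Sigma, \text{seq}}(\E, \C) \eqdot
\]
Composing the two displayed equivalences gives an equivalence from $\Fun^\text{ex}(\she, \C)$ to $\Fun^{\Sigma, \text{seq}}(\E, \C)$; being a composite of restriction functors along $\E \hookrightarrow \shge \hookrightarrow \she$, it is precisely restriction along the unit functor $\E \hookrightarrow \she$.

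It remains to identify the target $\Fun^{\Sigma, \text{seq}}(\E, \C)$ with $\Fun^\text{ex}(\E, \C)$ as full subcategories of $\Fun(\E, \C)$. Viewing $\C$ as the exact \cat $(\C, \C, \C)$, its exact sequences are exactly the bicartesian squares of the shape \eqref{diag:exact_seq}, that is, the cofiber sequences. A functor preserving finite coproducts sends the zero object to the zero object, so it sends an exact sequence $\ses$ to a square of the shape \eqref{diag:exact_seq}, and such a square is an exact sequence in $\C$ if and only if it is a pushout square. By \Cref{prop:when_is_functor_exact}, a coproduct-preserving functor $\E \rightarrow \C$ is exact if and only if it sends exact sequences to exact sequences; combining these two observations shows that the two conditions cut out the same full subcategory, so the identification holds and the proof concludes. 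The \textbf{main obstacle} is exactly this last identification: one has to match the condition ``sends exact sequences to pushout squares'' with the condition ``sends exact sequences to exact sequences in the stable \cat $\C$'', which rests on the characterization of exact sequences in a stable \cat together with \Cref{prop:when_is_functor_exact}. The bookkeeping that the composite equivalence coincides with restriction along $\E \hookrightarrow \she$ is then routine, since each constituent equivalence is itself a restriction functor.
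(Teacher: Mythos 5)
Your proposal is correct and follows essentially the same route as the paper: identify $\Fun^\text{ex}(\she,\C)\simeq\Fun^\text{rex}(\shge,\C)$ via \Cref{cor:whitehead_dbge} and the Spanier--Whitehead universal property, compose with the equivalence of \Cref{prop:universal_prop_dbge}, and identify $\Fun^{\Sigma,\text{seq}}(\E,\C)$ with $\Fun^\text{ex}(\E,\C)$ via \Cref{prop:when_is_functor_exact}. The only difference is that you spell out the last identification (matching pushout squares with exact sequences in the stable \cat $\C$) in more detail than the paper does, which is a harmless elaboration.
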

\begin{proof}
	Using the notation of Proposition \ref{prop:universal_prop_dbge}, the 
	codomain of the functor agrees with
	$\Fun^{\Sigma, \text{seq}}(\E, \C)$ by Proposition \ref{prop:when_is_functor_exact}.
	The functor $\E \hookrightarrow \she$ induces equivalences of \cats
	\begin{equation*}
		\Fun^\text{ex}(\she, \C) \eqarrow \Fun^\text{rex}(\shge, \C) \eqarrow 
			\Fun^{\Sigma, \text{seq}}(\E, \C) \eqdot
	\end{equation*}
	The rightmost equivalence comes from the
	\Cref{prop:universal_prop_dbge}, while the leftmost one comes from
	\Cref{cor:whitehead_dbge} together with the universal property of the Spanier--Whitehead
	construction.
\end{proof}
The following definition is taken from \cite[Not.~4.2]{barwick_exact_2015}.
\begin{definition}
	We let \simcatexact be the following simplicial category:
	The objects of \simcatexact are small exact \cats
	and, for two small exact \cats $\E$, $\F$, we set
	\begin{equation*}
		\simcatexact(\E, \F) \vcentcolon= \core(\Fun^\text{ex}(\E, \F)) \eqcomma
	\end{equation*}
	where $\Fun^\text{ex}(\E, \F)$ is the full subcategory of $\Fun(\E, \F)$ spanned by the exact functors.
	We let $\catexact$ be the homotopy coherent nerve of $\simcatexact$ and $\catstable$ be the full subcategory of $\catexact$
	spanned by the stable \cats (with every morphism marked both as a fibration and a cofibration as in Example \ref{ex:stable_is_exact}).
\end{definition}
\begin{corollary}\label{cor:left_adjoint_existence}
	The embedding $\catstable \hookrightarrow \catexact$ admits a left adjoint
	\begin{equation*}
		\sh\colon \catexact \rightarrow \catstable
	\end{equation*}
	The unit of this adjunction is, for every exact \cat
	$\E$, equivalent to the canonical functor $\E \rightarrow \she$.
\end{corollary}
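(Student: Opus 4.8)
The plan is to upgrade the objectwise universal property of \Cref{prop:adjointness_prop_of_she} to a corepresentability statement and then invoke an adjoint functor criterion. By construction, the mapping space between two objects $\E, \F$ of the homotopy coherent nerve $\catexact$ is $\core(\Fun^\text{ex}(\E, \F))$, since the simplicial category $\simcatexact$ is enriched in Kan complexes; as $\catstable$ is a full subcategory, the inclusion $G\colon \catstable \hookrightarrow \catexact$ is fully faithful and the same formula computes mapping spaces between stable \cats.

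Fix an exact \cat $\E$. Since $\she$ is stable and the canonical functor $\eta_\E\colon \E \to \she$ is exact by \Cref{cor:inclusion_exact}, the latter is a morphism in $\catexact$ and, by covariant functoriality of mapping spaces, induces a natural transformation of functors $\catstable \to \catspaces$
\[
	\map_{\catstable}(\she, -) \xrightarrow{\ \eta_\E^\ast\ } \map_{\catexact}(\E, G(-)) \eqcomma
\]
where on the source I have used the full faithfulness of $G$ to identify $\map_{\catexact}(\she, G(-))$ with $\map_{\catstable}(\she, -)$; the transformation $\eta_\E^\ast$ is restriction along $\eta_\E$. Evaluated at a stable \cat $\C$, this transformation is the image under $\core$ of the restriction functor $\Fun^\text{ex}(\she, \C) \to \Fun^\text{ex}(\E, \C)$, which is an equivalence by \Cref{prop:adjointness_prop_of_she}. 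As a natural transformation of $\catspaces$-valued functors is an equivalence whenever it is so objectwise, the functor $\C \mapsto \map_{\catexact}(\E, G(\C))$ is corepresented by $\she$ with universal element $\eta_\E$.

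Having established this for every $\E$, I would conclude by the adjoint functor criterion (the dual of \HTT{Prop.~5.2.4.2}): the inclusion $G$ admits a left adjoint $\sh\colon \catexact \to \catstable$ sending $\E$ to $\she$, and the universal morphism $\eta_\E$ is precisely the component of the unit at $\E$, which is the asserted canonical functor $\E \to \she$.

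The step I expect to require the most care is the passage from the objectwise equivalence of \Cref{prop:adjointness_prop_of_she} to the honest natural transformation $\eta_\E^\ast$ of space-valued functors: one must check that restriction along the fixed functor $\eta_\E$ assembles into the transformation associated to $\eta_\E$ and that it takes values in the cores, so that \Cref{prop:adjointness_prop_of_she} applies verbatim on each object. A secondary point is the identification of the mapping spaces of $\catexact$ with $\core(\Fun^\text{ex}(-,-))$, together with a brief remark on size; both are routine given the essential $\V$-smallness of $\she$ established earlier.
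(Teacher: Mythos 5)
Your proposal is correct and follows essentially the same route as the paper: both identify the mapping spaces of $\catexact$ with $\core(\Fun^\text{ex}(-,-))$ and reduce everything to the objectwise equivalence of \Cref{prop:adjointness_prop_of_she}, differing only in the choice of adjoint-existence criterion (the paper invokes \HTT{Prop.~5.2.7.8}, which requires only that the fixed exact functor $\E \rightarrow \she$ induce equivalences on mapping spaces into stable \cats, so the naturality question you flag as the delicate step does not actually arise). There is no gap.
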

\begin{proof}
	By \HTT{Prop.~5.2.7.8},
	it is sufficient to show that for every object $\E \in \catexact$,
	there exists a morphism
	$\E \rightarrow \H$ in $\catexact$ such that $\H$ is in $\catstable$ and 
	for every stable \cat $\C$, the induced map
	\begin{equation*}
		\map_{\catexact}(\H, \C) \rightarrow \map_{\catexact}(\E, \C)
	\end{equation*}
	is an isomorphism in the homotopy category of spaces.
	By the construction of \catexact, this map is equivalent to the induced map
	\begin{equation*}
		\core(\Fun^\text{ex}(\H, \C)) \rightarrow \core(\Fun^\text{ex}(\E, \C)) \eqdot
	\end{equation*}
	\Cref{prop:adjointness_prop_of_she} shows that $\E \hookrightarrow \she$ has the desired property.
\end{proof}
In \cite{keller_chain_1990} Keller provides an elementary proof of the Gabriel--Quillen embedding.
The proof of the following proposition is an \categorical analogue of his proof that
this embedding reflects exact sequences and is closed under exact sequences. 
\begin{prop}\label{prop:she_reflects_preserves}
	The functor $\E \hookrightarrow \she$ reflects exact sequences and the essential image of this functor is closed under extensions.
\end{prop}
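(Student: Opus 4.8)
The plan is to reduce both assertions to homotopy-group computations in the standard $t$-structure on $\SW(\PS)$, following the pattern of \Cref{remark:primitive_acyclic_effaceable}. The two recurring tools are the projectivity of representables (\Cref{prop:E_is_projective}), which makes $\pi_0\map(\hat{x}, \Sigma F)$ and all negative groups vanish, and the fact that acyclic objects have effaceable homotopy groups (\Cref{prop:acyclics_built_from_primitive_acyclics}). I identify $x \in \E$ with the representable $\hat{x}$ and use freely that $\E \hookrightarrow \shge \hookrightarrow \she$ is fully faithful (\Cref{prop:she_ff}, \Cref{prop:connective_inclusion_ff}) and that $\shge$ is closed under extensions in $\she$ (\Cref{cor:whitehead_dbge}).

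For reflection, I take a composable pair $x \xrightarrow{i} y \xrightarrow{p} z$ in $\E$ with a nullhomotopy of $p \circ i$ whose image is a cofiber sequence in $\she$. Since $\PSf \to \shge \to \she$ is right exact, the cofiber of $\hat{i}$ is computed already in $\PSf$, so the hypothesis says precisely that the canonical map $c\colon \cofib(\hat{i}) \to \hat{z}$ is a quasi-isomorphism, i.e.\ that its cofiber $A$ is acyclic. I would then apply $\map(\hat{t}, -)$ for each $t \in \E$ to the cofiber sequences $\hat{x} \to \hat{y} \to \cofib(\hat{i})$ and $\cofib(\hat{i}) \xrightarrow{c} \hat{z} \to A$ and run the Four and Five Lemma argument of \Cref{remark:primitive_acyclic_effaceable}, using \Cref{prop:E_is_projective} to annihilate the terms $\pi_{-1}\map(\hat{t}, \hat{x})$. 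This pins down the homotopy groups of $\cofib(\hat{i})$ in terms of those of $\hat{z}$ up to an effaceable correction concentrated in degree zero, which is exactly the $\overline{\cdot}$-level shadow of an exact sequence. Feeding this pointwise data into the diagram lemmas for exact \cats (\Cref{prop:big_diagram_lemma} and its companions) should then force $i$ to be a cofibration, $p$ to be its cofiber, and the square to be exact in $\E$.

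For closure under extensions, I begin with an exact sequence $\hat{x} \to W \to \hat{z}$ in $\she$ with $x, z \in \E$. By \Cref{cor:whitehead_dbge} the middle term $W$ already lies in $\shge$, and writing the sequence as a fiber sequence exhibits $W = \fib(\hat{z} \xrightarrow{\alpha} \Sigma\hat{x})$ for a class $\alpha \in \pi_0\map_{\she}(\hat{z}, \Sigma\hat{x})$. The crux is to identify this group with the genuine extensions of $z$ by $x$ in $\E$. Because $\hat{z}$ is projective, \Cref{prop:E_is_projective} gives $\pi_0\map_{\SW(\PSf)}(\hat{z}, \Sigma\hat{x}) = 0$; combined with the Verdier-quotient description of the mapping spaces, every class $\alpha$ is therefore represented by a map $\hat{z} \to A$ into an acyclic object, equivalently---through \Cref{prop:acyclics_built_from_primitive_acyclics} and \Cref{remark:primitive_acyclic_effaceable}---by effaceable data, which the exact structure of $\E$ realizes as an exact sequence $\hat{x} \to \hat{y} \to \hat{z}$ with $y \in \E$. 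Since $\E \hookrightarrow \she$ preserves exact sequences (\Cref{cor:inclusion_exact}), this $\hat{y}$ presents the same class $\alpha$, so $W \simeq \hat{y}$ lies in the essential image.

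The main obstacle is the final lifting step in each part: passing from the homotopy-group information, which lives in the abelian category $\Mod$ and is governed entirely by the effaceable objects, back to honest cofibrations, fibrations, and bicartesian squares in $\E$. This is the delicate heart of Keller's original argument \cite{keller_chain_1990}; here it rests on combining the projectivity of representables with the diagram lemmas for exact \cats, and the heaviest bookkeeping lies in checking that the exact sequence produced in $\E$ represents the \emph{given} class rather than merely some class with the same effaceable shadow.
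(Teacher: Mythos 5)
Your overall strategy---push everything into homotopy groups in $\Mod$, use effaceability of the homotopy groups of the acyclic cofiber, and lift back to $\E$---points in the right direction, and your sketch for closure under extensions is close in spirit to the paper's argument. But there is a genuine gap, and it sits exactly where you flag ``the delicate heart'': you never supply the mechanism that produces a fibration of $\E$ from the derived data. In the reflection argument you propose to conclude that $i$ is a cofibration by ``feeding the pointwise data into the diagram lemmas'', but \Cref{prop:big_diagram_lemma}, \Cref{prop:extend_exact_diagram} and \Cref{prop:diagram_lemma_1} all take cofibrations and fibrations as \emph{hypotheses} and only ever output maps built from them by pushout, pullback and composition; no computation in $\Mod$ can certify that a given morphism of $\E$ lies in $\E_\dagger$, because the exact structure is extra data, not a property visible at the level of homotopy groups. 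The missing step---which is the actual content of the paper's proof---is this: the acyclic cofiber $Q$ has $\pi_0(Q)$ effaceable, i.e.\ \emph{by definition} $\pi_0(Q) \cong \coker(\overline{q})$ for an honest fibration $q\colon w \twoheadrightarrow v$ of $\E$; projectivity of $\overline{z}$ lets one lift $\overline{z} \to \pi_0(Q)$ through $\overline{v}$, and pulling $q$ back along this lift \emph{inside $\E$} produces a fibration $p_*\colon y_* \twoheadrightarrow z$. The resulting exact sequence must then be compared with the given one using \Cref{prop:E_is_projective} and \Cref{prop:extend_exact_diagram}; this is precisely the ``bookkeeping'' you defer, and without it you obtain only \emph{some} exact sequence ending in $z$, not one equivalent to the given sequence---so neither the reflection claim nor the identification $W \simeq \hat{y}$ in your extension argument actually follows.

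Two smaller points. First, ``the hypothesis says precisely that $c\colon \cofib(\hat{i}) \to \hat{z}$ is a quasi-isomorphism'' is not quite accurate: a map becomes an equivalence in the Verdier quotient if and only if its cofiber lies in the \emph{thick} closure of the acyclics; this is harmless here (direct summands of acyclics still have effaceable homotopy groups by \Cref{prop:eff_closed_kernel_cokern}), but it should be addressed, and the paper sidesteps it by working with the left calculus of fractions on $\ho(\PSf)$ to replace the given sequence by a strict one. Second, the paper does not run two separate arguments: it considers a single exact sequence $\hat{x} \to Y' \to \hat{z}$ in $\shge$ with $x, z \in \E$ and shows it is equivalent to the image of an exact sequence of $\E$; reflection is then immediate from fully faithfulness, since exactness of sequences in $\E$ is invariant under equivalence. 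Adopting that organization would let your (currently weaker) reflection half inherit the machinery you set up for extensions.
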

\begin{proof}
	As the embedding $\shge \hookrightarrow \SW(\shge) \eqarrow \she$ reflects exact sequences and
	has an essential image which is closed under extensions, it is sufficient to
	show that the functor $\E \hookrightarrow \shge$ has these properties as well.

	Let $\hat{x} \rightarrow Y' \rightarrow \hat{z}$ be an exact sequence in $\shge$
	with $x, z \in \E$. As the functor $\E \hookrightarrow \shge$ is fully faithful,
	it suffices to show that there exists an exact sequence 
	$\ses$ in $\E$ which is equivalent to the given sequence in $\shge$.
	The hom-sets in $\ho(\shge)$ can be computed
	using the left calculus of fractions on $\ho(\PSf)$.
	Using this, one can form a commuting diagram of the form
	\begin{equation*}
		\begin{tikzcd}[
			column sep={4.5em,between origins},
			row sep={3.3em,between origins},
		  ]
			\hat{x} \arrow[r, "j"] \arrow[rd, dashed] & Y \arrow[r] \ar[d, Isom, leftarrow] &\cofib(j) \arrow[Isom, "f"']{d} \\
			& Y' \arrow[rd, dashed] & Z \ar[d, Isom, leftarrow, "g"'] \\
			&& \hat{z}
			\end{tikzcd}
	\end{equation*}
	in $\ho(\shge)$,
	where all the solid arrows come from morphisms in $\PSf$, the cofiber of $j$ is taken in $\PSf$
	and the vertical maps are all quasi-isomorphisms in $\PSf$.

	It now suffices to show that the exact sequence
	$\hat{x} \rightarrow Y \rightarrow Z$ is equivalent to the image of an exact
	sequence in $\E$. Let $d\colon Z \rightarrow Q$ be the cofiber of
	$f$ in $\PSf$. Note that,
	as $f$ is a quasi-isomorphism,
	$Q$ is acyclic. By \Cref{prop:acyclics_built_from_primitive_acyclics}, $\pi_0(Q)$ is effaceable,
	hence it is the cokernel of some fibration $\overline{q}: \overline{w} \rightarrow \overline{v}$ in $\Mod$.
	The diagram in $\Mod$
	\begin{equation*}
		\begin{tikzcd}[column sep=large]
			&& \overline{v} \ar[d] \\
			\overline{z} \ar[r, "\pi_0({g})"'] \ar[urr, dashed] & \pi_0(Z) \ar[r, "\pi_0({d})"'] & \pi_0({Q})
		\end{tikzcd}
	\end{equation*}
	has a lift, as the vertical map is an epimorphism. We denote the pullback of  $q$ along this lift by $p_*\colon y_* \xrightarrowdbl{}{} z$.
	Note that the composition $d \circ g \circ \hat{p_*}$ is the zero-map in $\ho(\PSf)$. Hence the diagram in $\PSf$
	\begin{equation*}
		\begin{tikzcd}[column sep=large]
			&& \cofib(j) \ar[d] \\
			\hat{y_*} \ar[r, "\hat{p_*}"'] \ar[urr, dashed] & \hat{z} \ar[r, "g"'] & Z
		\end{tikzcd}
	\end{equation*}
	has a lift which can, by \Cref{prop:E_is_projective}, be lifted further to a square in $\PSf$
	\begin{equation*}
		\begin{tikzcd}
			\hat{y_*} \arrow[d] \arrow[r, "\hat{p_*}"]  & \hat{z} \arrow[d, "g"]  \\
			Y \arrow[r] & Z \eqdot
			\end{tikzcd}
	\end{equation*}
	By \Cref{prop:extend_exact_diagram}, this square can be extended to a morphism of exact sequences
	of the form
 	\begin{equation*}
	 	\begin{tikzcd}
	 	\sigma \arrow[d] 	& \hat{x_*} \arrow[r] \arrow[d] \bicart  & \hat{y_*} \arrow[r, "\hat{p_*}"] \arrow[d]  & \hat{z} \arrow[d, equal] \\
	 	\sigma'' \arrow[d] 	&\hat{x} \arrow[r] \arrow[d, equal] & \hat{y} \arrow[d] \arrow[r] \bicart  & \hat{z} \arrow[d, Isom] \\
	 	\sigma' 			&\hat{x} \arrow[r,"j"]           & Y \arrow[r] & Z                    
	 	\end{tikzcd}
 	\end{equation*}
 	in which the marked squares are bicartesian. 
	 The morphism $\sigma'' \rightarrow \sigma'$ is an equivalence. Since the functor $\E \hookrightarrow \shge$ is fully
	 faithful, $\E$ has fibers of fibrations and $p_*$ is a fibration, $\hat{x_*}$ can be chosen to be the image of
	 some $x_* \in \E$. By a similar argument, the middle row can be chosen to be the image of an exact sequence in $\E$.
\end{proof}
We are ready to prove \Cref{thm:main}.
\begin{proof}[Proof of \Cref{thm:main}]
    The fact that $\sh$ is left adjoint to the inclusion $\catstable \hookrightarrow \catexact$ is established in \Cref{cor:left_adjoint_existence}, which also
    states that the unit of the adjunction is equivalent to the construction of the stable hull.
    We apply our findings about the stable hull to prove the three claimed properties of the unit functor.
    For the stable hull, we proved (\ref{item:eta_ff}) in \Cref{prop:she_ff}
    and (\ref{item:eta_preserves}) and (\ref{item:eta_closed_under_ext}) in \Cref{prop:she_reflects_preserves}.
\end{proof}

\subsection{The stable hull of an ordinary exact category}\label{sec:1_cat}

\newcommand{\kbgp}{\K^b_{\geq 0}(P)}

If $\E$ is an exact \onecat in the sense of Quillen,
its nerve inherits the structure of an exact \cat.
In this case, the universal property of the stable hull and the universal property of the bounded derived category
proven in \cite[Cor.~7.59]{bunke_controlled_2019}
imply the existence of a canonical equivalence
    \begin{equation*}
        \dbe \eqarrow \she \eqdot
    \end{equation*}

For completeness, we include a slightly different proof which
explains our naming of acyclic objects and quasi-isomorphisms.

\begin{prop}[{\cite[Prop.~7.55]{bunke_controlled_2019}}]\label{prop:cbe_is_swpsf}
    Let $\E$ be an exact \onecat. Then there exists a canonical equivalence
    \begin{equation*}
        \kbe \eqarrow \SW(\PSf)
    \end{equation*}
    whose composition with the inclusion $\E \rightarrow \kbe$ is equivalent to the Yoneda
    embedding $\E \rightarrow \PSf$.
\end{prop}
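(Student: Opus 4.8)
The plan is to recognize both $\kbe$ and $\SW(\PSf)$ as the universal stable \cat equipped with an additive functor out of $\E$, and then to verify directly that the resulting comparison functor is an equivalence by a computation on generators. First I would fix a stable \categorical model of $\kbe$: since an exact \onecat is in particular additive, hence $\Z$-linear, the category of bounded chain complexes $\mathrm{Ch}^b(\E)$ is a dg-category, and I take $\kbe$ to be its dg-nerve; its homotopy category is the classical bounded homotopy category and its mapping spectra are computed by the associated Hom-complexes. To produce the comparison functor, I would combine the universal property of the Spanier--Whitehead construction \SAG{Prop.~C.1.1.7} with the universal property of $\PSf$ established after \Cref{def:cbge}: together they give, for every stable \cat $\C$, an equivalence $\Fun^\text{ex}(\SW(\PSf), \C) \eqarrow \Fun^\Sigma(\E, \C)$ by restriction along $\E \hookrightarrow \PSf \hookrightarrow \SW(\PSf)$. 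Applying this to $\C = \kbe$ and to the finite-coproduct-preserving functor $x \mapsto x[0]$ yields an exact functor $F\colon \SW(\PSf) \rightarrow \kbe$ whose restriction to $\E$ is the inclusion of degree-zero complexes.

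It then remains to show that $F$ is an equivalence. For essential surjectivity I would use that $\SW(\PSf)$ is generated by $\E$ under suspensions and extensions, while every bounded complex is an iterated extension of shifts of its terms via the stupid filtration; since $F$ is exact and hits every $x[0]$, its essential image is a stable subcategory containing all generators, hence all of $\kbe$. For full faithfulness I would first treat generators. On the source, the Yoneda Lemma together with \Cref{prop:E_is_projective} computes $\pi_k\map_{\SW(\PSf)}(\hat x, \Sigma^n \hat y)$: for $m := n-k$ one has $\pi_0\map(\hat x, \Sigma^m \hat y) = \E(x,y)$ when $m = 0$, vanishes for $m > 0$ by \Cref{prop:E_is_projective}, and vanishes for $m < 0$ because $\hat y(x) = \E(x,y)$ is discrete. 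Thus this mapping spectrum is the Eilenberg--MacLane spectrum on $\E(x,y)$ placed in degree $n$, which is exactly the group and degree computing $\map_{\kbe}(x[0], y[n])$ through the Hom-complex.

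Since $F$ induces the identity on $\E(x,y)$ in degree zero, it is an equivalence on mapping spectra between all suspensions of representables. I would then propagate this to all objects: for fixed $x$, the full subcategory of those $Y$ for which $\map(\hat x, Y) \rightarrow \map(F\hat x, FY)$ is an equivalence is closed under finite colimits and suspensions, as both $\map(\hat x, -)$ and $F$ are exact, and it contains every $\Sigma^n\hat y$, so it is all of $\SW(\PSf)$; a dual argument in the first variable gives full faithfulness in general. Hence $F$ is an equivalence, and its inverse is the asserted canonical equivalence $\kbe \eqarrow \SW(\PSf)$, which by construction restricts on $\E$ to the Yoneda embedding into $\PSf$. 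The main obstacle is the bookkeeping in pinning down the stable model of $\kbe$ and matching the two mapping-spectrum computations, including the degree and functoriality conventions; once the generators are shown to agree, the exactness-and-generation arguments are routine.
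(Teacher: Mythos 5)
The paper offers no proof of this proposition: it is imported verbatim from \cite[Prop.~7.55]{bunke_controlled_2019}, accompanied only by a remark reconciling the two definitions of $\PSf$ via \Cref{prop:psf_prestable}. Your proposal therefore supplies an argument where the paper supplies a citation, and the route is sound and self-contained (in particular it works directly with the paper's $\PSf$, so the reconciling remark becomes unnecessary): you obtain the comparison functor $F\colon \SW(\PSf) \rightarrow \kbe$ from the universal property of $\SW(\PSf)$ applied to $x \mapsto x[0]$, you identify both mapping spectra between suspensions of generators with the Eilenberg--MacLane spectrum on $\E(x,y)$ in the correct degree --- \Cref{prop:E_is_projective} giving the vanishing for $m>0$ and discreteness of $\E(x,y)$ the vanishing for $m<0$ --- and you propagate along the generation of $\SW(\PSf)$ under finite colimits and desuspensions. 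The one step that needs reordering is essential surjectivity: as written you invoke closure of the essential image of $F$ under extensions to run the stupid-filtration induction, but the essential image of an exact functor is only closed under cofibers once the functor is known to be full on the relevant mapping spaces, since the attaching map $X_n[n-1] \rightarrow \sigma_{\geq n+1}X$ must first be lifted along $F$. Since you do prove full faithfulness, the repair is purely organizational: establish full faithfulness first, then conclude that the essential image is a stable subcategory containing every $x[0]$ and hence all of $\kbe$.
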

\begin{remark}
    While the definition of $\PSf$ used by Bunke, Cisinski, Kasprowski and Winges differs
    from ours, \Cref{prop:psf_prestable} ensures that they are equivalent.
\end{remark}
\begin{prop}
    Let $\E$ be an exact category, considered an exact \cat as in Example \ref{ex:ordinary_exact_is_exact}. 
    Then, the equivalence described in Proposition \ref{prop:cbe_is_swpsf} sends
    an object to an acyclic object in the sense of Definition \ref{def:acyclics}
    if and only if it is an acyclic complex in
    the usual sense. Hence, this equivalence preserves and reflects quasi-isomorphisms.
\end{prop}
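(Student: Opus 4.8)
The plan is to compare the two classes of objects by matching their generators and then observing that each class is the closure of its generators under the same operations. Throughout I write $\Phi\colon \kbe \xrightarrow{\sim} \SW(\PSf)$ for the equivalence of \Cref{prop:cbe_is_swpsf}; it is an exact functor of stable \cats that restricts to the Yoneda embedding on $\E$, so it carries shifts of complexes to suspensions and mapping cones of chain maps to cofibers.

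First I would identify the generators. Let $\ses$ be an exact sequence in $\E$, with associated conflation complex $[x \to y \to z]$ placed in degrees $2,1,0$. Since $\Phi$ sends the degree-$0$ objects $x,y,z$ to $\hat x,\hat y,\hat z$ and is exact, it carries $\mathrm{Cone}(x \xrightarrow{i} y) = [x \to y]$ to $\cofib(\hat i)$, and then carries the mapping cone of the induced chain map $[x \to y] \to z$ to $\cofib(\cofib(\hat i)\to \hat z)$. By \Cref{primitive_acyclic_old_def} the latter is exactly the primitive acyclic object attached to the sequence. Hence $\Phi$ identifies shifts of conflation complexes with suspensions of primitive acyclic objects. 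The only point needing care is that the canonical map $\cofib(\hat i)\to \hat z$ corresponds under $\Phi$ to the chain map induced by $p$; this holds because both are characterized by factoring $\hat p$ (resp.\ $p$) through the cofiber, using $\hat p\hat i\simeq 0$ (resp.\ $pi=0$).

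With the generators matched, the comparison of objects follows from the classical description of the acyclic complexes over an exact category: they form the smallest triangulated subcategory of $\kbe$ containing the conflation complexes, the dévissage underlying Keller's construction of the bounded derived category \cite{keller_chain_1990}. By \Cref{def:acyclics} the acyclic objects are, dually, the smallest class in $\SW(\PSf)$ closed under suspension, desuspension and cofibers containing the primitive acyclic objects, i.e.\ the smallest stable subcategory they generate. Since $\Phi$ is an exact equivalence sending conflation complexes to primitive acyclic objects, it carries the smallest triangulated subcategory generated by the former onto the stable closure of the latter. Thus $\Phi$ restricts to an equivalence between the acyclic complexes and the acyclic objects, which settles the \emph{if and only if}.

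Finally, the statement about quasi-isomorphisms is a formal corollary. A morphism of $\SW(\PSf)$ is a quasi-isomorphism in our sense exactly when its cofiber is acyclic, whereas a chain map in $\kbe$ is a quasi-isomorphism in the usual sense exactly when its mapping cone is an acyclic complex. As $\Phi$ preserves cofibers and, by the previous step, matches acyclic objects with acyclic complexes, it preserves and reflects quasi-isomorphisms. I expect the main obstacle to be the classical generation statement invoked above: unwinding it amounts to a dévissage of a bounded acyclic complex along its cycle objects, peeling off one conflation complex at a time via mapping cones and eliminating the resulting contractible summands, which is a standard but slightly delicate induction on the width of the complex.
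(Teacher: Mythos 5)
Your proposal is correct and follows essentially the same route as the paper: match the generators by identifying the conflation complex with the primitive acyclic object via \Cref{primitive_acyclic_old_def} (both being the cofiber of $\cofib(\hat{i}) \rightarrow \hat{z}$, using right exactness of the equivalence), then observe that the acyclic complexes and the acyclic objects are the respective triangulated/stable closures of these generators. The paper likewise asserts the classical generation statement for acyclic complexes without further proof.
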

\begin{proof}
    The full subcategory of acyclic chain complexes in $\kbe$ is the triangulated closure
    of acyclic complexes of the form
    \begin{equation*}
        \dots \rightarrow 0 \rightarrow X_2 \xrightarrowtail{i} X_1 \xrightarrowdbl{p} X_0
        \rightarrow 0 \rightarrow \cdots \eqdot
    \end{equation*}
    It suffices to show that, under the equivalence $\kbe \eqarrow \SW(\PSf)$,
    an acyclic complex of this form
    is sent to the primitive acyclic object
    corresponding to the exact sequence $X_2 \xrightarrowtail{i} X_1 \xrightarrowdbl{p} X_0$.
    This follows from right exactness, as both of these objects are equivalent the cofiber of the
    induced map $\cofib(\hat{i}) \rightarrow X_0$, see \Cref{primitive_acyclic_old_def}.
\end{proof}
\begin{corollary}[{\emph{cf.}~\cite[Cor.~7.59]{bunke_controlled_2019}}]\label{prop:onecat_she}
    Let $\E$ be an exact category, considered an exact \cat as in Example \ref{ex:ordinary_exact_is_exact}. Then, there is a canonical equivalence
    $\dbe \eqarrow \she$,
    whose composition with the inclusion $\E \hookrightarrow \dbe$ is equivalent to the inclusion
    $\E \hookrightarrow \she$.
\end{corollary}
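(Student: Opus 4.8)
The plan is to transport the two defining localizations across the equivalence of \Cref{prop:cbe_is_swpsf}. Recall that $\she$ is by definition the localization of $\SW(\PSf)$ at its quasi-isomorphisms, equivalently the Verdier quotient of $\SW(\PSf)$ by the full subcategory of acyclic objects. On the other side, the bounded derived \cat $\dbe$ is the localization of $\kbe$ at quasi-isomorphisms, that is, the Verdier quotient of $\kbe$ by the acyclic complexes; I would state this identification explicitly, as it is the definition of the bounded derived \cat in the stable \categorical sense.

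First I would invoke \Cref{prop:cbe_is_swpsf} to fix the canonical equivalence $\kbe \eqarrow \SW(\PSf)$ together with the fact that it restricts to the Yoneda embedding $\E \hookrightarrow \PSf \hookrightarrow \SW(\PSf)$ on $\E$. By the preceding proposition, this equivalence preserves and reflects quasi-isomorphisms, carrying acyclic complexes in $\kbe$ precisely onto the acyclic objects of \Cref{def:acyclics} in $\SW(\PSf)$. Since an equivalence of \cats which identifies the two localizing classes of morphisms descends to an equivalence of the respective localizations, applying this to the two Verdier quotients yields the desired canonical equivalence $\dbe \eqarrow \she$.

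For the compatibility with the inclusion of $\E$, I would note that the square relating the localization functors $\kbe \rightarrow \dbe$ and $\SW(\PSf) \rightarrow \she$ commutes, as both localizations are induced by the same equivalence. Because that equivalence restricts to the Yoneda embedding on $\E$, and the inclusions $\E \hookrightarrow \dbe$ and $\E \hookrightarrow \she$ are the composites of these embeddings with the respective localization functors, the induced equivalence $\dbe \eqarrow \she$ intertwines the two inclusions up to natural equivalence.

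The genuine input is the preceding proposition identifying acyclics on both sides, which is already available; everything else is formal. I expect the only mild obstacle to be making precise the claim that an equivalence respecting the localizing classes induces an equivalence of localizations. This follows from the universal property of the Verdier quotient (equivalently, of the localization), so I would either cite it or spell it out via that universal property, checking that the two quotient functors satisfy the same universal mapping property after composition with the fixed equivalence.
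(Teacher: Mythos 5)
Your argument is correct and is precisely the route the paper intends: the corollary is stated without an explicit proof because it follows immediately from \Cref{prop:cbe_is_swpsf} together with the preceding proposition identifying the acyclics (hence the quasi-isomorphisms) on both sides, after which one localizes both sides and uses that the equivalence restricts to the Yoneda embedding on $\E$. You have simply written out the formal steps the paper leaves implicit, so there is nothing to add.
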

\appendix
\section*{Appendix. Diagram lemmas in exact \cats}
\setcounter{section}{1}
\setcounter{definition}{0}
In this appendix, we include the proofs of three diagram lemmas we use in the article.
\begin{prop}\label{prop:big_diagram_lemma}
    Let $\E$ be an exact \cat. Consider a square in $\E$ of the form
    \begin{equation*}
        \begin{tikzcd}
            x \arrow[r, "i", tail] \arrow[d, "f"] & y \arrow[d, "g"] \\
            x' \arrow[r,"j", tail] & y'
        \end{tikzcd}
    \end{equation*}
    where the morphisms $i$ and $j$ are cofibrations. The following conditions are equivalent:
    \begin{enumerate}
		\item\label{enum:bicart} The square is bicartesian.
        \item\label{enum:pushout} The square is a pushout square.
        \item\label{enum:extend} 
        	There exists a commutative diagram
        	\begin{equation*}
        	\begin{tikzcd}
        	x \arrow[r, "i", tail] \arrow[d, "f"] & y \arrow[d, "g"] \\
        	x' \arrow[r,"j", tail]\arrow[d, two heads] & y' \arrow[d, two heads] \\
        	0 \arrow[r, tail] & z
        	\end{tikzcd}
        	\end{equation*}
        	where both the lower and the outer rectangle are bicartesian.
    \end{enumerate}
\end{prop}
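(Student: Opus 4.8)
The plan is to prove the cycle (\ref{enum:bicart}) $\Rightarrow$ (\ref{enum:pushout}) $\Rightarrow$ (\ref{enum:extend}) $\Rightarrow$ (\ref{enum:bicart}). The implication (\ref{enum:bicart}) $\Rightarrow$ (\ref{enum:pushout}) is immediate, as a bicartesian square is in particular cocartesian. Throughout I would use freely that, by the defining axiom of an exact \cat, a cocartesian square whose top map is a cofibration and whose left map is a fibration is automatically cartesian; in particular, for any cofibration $k\colon a \to b$ the cofiber square with corners $a, b, 0, \cofib(k)$ is bicartesian and so exhibits $a \to b \to \cofib(k)$ as a fiber sequence.

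For (\ref{enum:pushout}) $\Rightarrow$ (\ref{enum:extend}), I would first note that $j$, being a pushout of the cofibration $i$, is a cofibration, so its cofiber $z \vcentcolon= \cofib(j)$ exists and the lower square on $x', y', 0, z$ is bicartesian. Stacking this lower square underneath the given pushout square and applying the pasting law for pushouts (the dual of \HTT{Lem.~4.4.2.1}), the outer rectangle is cocartesian; since its top map is the cofibration $i$ and its left map $x \to 0$ is a fibration, the exactness axiom upgrades it to a bicartesian square. This produces exactly the diagram required in (\ref{enum:extend}).

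The implication (\ref{enum:extend}) $\Rightarrow$ (\ref{enum:bicart}) is the heart of the matter and splits into the cartesian and the cocartesian assertions. For cartesianness I would stack the top square over the lower rectangle and invoke the pasting law for pullbacks \HTT{Lem.~4.4.2.1}: since the lower rectangle and the outer rectangle are cartesian, so is the top square. The cocartesian assertion cannot be obtained by the dual pasting law, since that law would require the top square to already be cocartesian. Instead I would form the genuine pushout $w \vcentcolon= y \sqcup_x x'$ (which exists because $i$ is a cofibration) together with the comparison map $\phi\colon w \to y'$ induced by $g$ and $j$, and show that $\phi$ is an equivalence. Here $x' \to w$ is a cofibration, and since pushouts compose one has $\cofib(x' \to w) \simeq \cofib(i) \simeq z$, so $x' \to w \to z$ is a fiber sequence; by hypothesis $x' \to y' \to z$ is also a fiber sequence, and $\phi$ is a map between them restricting to the identity on $x'$ and on $z$.

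The main obstacle is precisely this last step: deducing that a map of fiber sequences which is the identity on the outer two terms is an equivalence on the middle term. Pasting laws alone are circular here, as both the natural pullback and pushout reformulations amount to asserting that $\phi$ is an equivalence, and one genuinely needs the additive structure: in particular $g$ need not be a fibration, so the exactness axiom does not apply to the top square directly. I would resolve it by mapping an arbitrary object $t \in \E$ into the two fiber sequences, comparing the resulting Serre long exact sequences of homotopy groups, and applying the five lemma to conclude that $\map(t, w) \to \map(t, y')$ is an equivalence for every $t$; the Yoneda lemma then shows $\phi$ is an equivalence, so the top square is cocartesian. This is the same mapping-space technique used elsewhere in the article, and I expect the bookkeeping at the level of $\pi_0$ to be the only delicate point.
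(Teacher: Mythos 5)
Your cycle is correctly organized and the first two implications are fine: (\ref{enum:bicart}) $\Rightarrow$ (\ref{enum:pushout}) is trivial, and your proof of (\ref{enum:pushout}) $\Rightarrow$ (\ref{enum:extend}) is the paper's (push out $j$ along $x' \twoheadrightarrow 0$ and paste). The cartesian half of (\ref{enum:extend}) $\Rightarrow$ (\ref{enum:bicart}) via the pasting law is also correct, and your reduction of the cocartesian half to a short five lemma --- the comparison map $\phi\colon w = y \sqcup_x x' \to y'$ is a map of exact sequences $x' \rightarrowtail (-) \twoheadrightarrow z$ inducing equivalences on fiber and cofiber --- is sound.

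However, the mapping-space argument you propose for that last step has a genuine gap, and it is not mere bookkeeping at $\pi_0$. Mapping $t \in \E$ into the two cartesian squares gives fiber sequences of spaces $\map(t,x') \to \map(t,w) \to \map(t,z)$ and $\map(t,x') \to \map(t,y') \to \map(t,z)$, whose long exact sequences terminate with
\begin{equation*}
	\cdots \to \pi_1(\map(t,z)) \to \pi_0(\map(t,x')) \to \pi_0(\map(t,w)) \to \pi_0(\map(t,z)) \eqcomma
\end{equation*}
and there is no term to the right of $\pi_0(\map(t,z))$: the last map is not surjective in general (objects of $\E$ are not projective relative to fibrations), and $\E$ has no suspension with which to continue the sequence. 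The four and five lemmas therefore yield that $\pi_n(\map(t,w)) \to \pi_n(\map(t,y'))$ is an isomorphism for $n \geq 1$ and injective for $n = 0$, but surjectivity on $\pi_0$ --- that every $t \to y'$ factors through $\phi$ --- is exactly what cannot be extracted: a class in $\pi_0(\map(t,y'))$ whose image in $\pi_0(\map(t,z))$ fails to lift to $\pi_0(\map(t,w))$ is not ruled out by these sequences. This surjectivity is precisely the content of the short five lemma for exact categories and requires the interaction of cofibrations with fibrations, not just additivity. The paper supplies the missing input differently: after reducing to the case $f = \mathrm{id}$, it shows that the matrix map $\begin{pmatrix} j & g \end{pmatrix}\colon x \oplus y \to y'$ is a fibration (via \cite[Lem.~4.6]{barwick_exact_2015}), so that the relevant cartesian square is forced to be cocartesian by the defining axiom of an exact \cat, and the pasting law then isolates the original square. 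To repair your argument you would need to prove the short five lemma by some such categorical device rather than by the long exact sequence of homotopy groups.
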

\begin{proof}
	The equivalence (\ref{enum:bicart}) $\Leftrightarrow$ (\ref{enum:pushout}) is proven
	in \cite[Lemma~4.5]{barwick_exact_2015}.

	For (\ref{enum:pushout}) $\Rightarrow$ (\ref{enum:extend}), one pushes out $j$ along $x' \xrightarrowdbl{} 0$,
	and applies the Pasting Lemma for pushouts.

	We show (\ref{enum:extend}) $\Rightarrow$ (\ref{enum:pushout}).
	We first reduce to the case
	where $f$ is the identity by constructing a diagram
	\begin{equation*}
		\begin{tikzcd}[column sep=small, row sep=tiny]
			x\ar[rr, tail] \ar{dd}\ar[rd, equal] &&y\ar{dd}\ar[rd, equal]\\
			&x\ar[crossing over, tail]{rr}&&y\ar{dd}\\
			x'\ar[rr, tail] \ar{dd}\ar[rd, equal] &&w\drar\ar[two heads]{dd}\\
			&x'\ar[crossing over, tail]{rr}\ar[crossing over, leftarrow]{uu}&&y'\ar[two heads]{dd}\\
			0 \ar{rr} \ar[rd, equal] && z' \ar[rd, Isom] \\
			&0 \ar{rr}\ar[crossing over, leftarrow]{uu} && z \eqdot
			\end{tikzcd}
	\end{equation*}
	Here the front face is the original diagram, and the backsides are obtained by
	pushing out twice (in particular, $w = x' \amalg_x y$).
	As both the other front and the outer back faces are cocartesian, the morphism
	$z' \rightarrow z$ is an equivalence \cite[Rem.~6.2.4]{cisinski_higher_2019}. 
	Thus in the lower cube, the bottom, the back and the front face
	are bicartesian, hence the top face of the bottom cube fulfills the requirements of (\ref{enum:extend}). If the claim holds for
	that diagram, then the morphism $w \rightarrow y'$ is an equivalence, finishing the proof.

	So we assume, without loss of generality, that $f$ is the identity.
	We then extend the original diagram as indicated below. Note that all the new squares are bicartesian.
	\begingroup
	\setlength\arraycolsep{1.5pt}
		\begin{equation}\label{eq:huge_diagram_lemma_diagram}
		\begin{tikzcd}[column sep=6em]
			x \bicart \ar[d, equal], \ar[r, "\begin{pmatrix}
					\id \\
					-\id
				\end{pmatrix}"', near start] & x \oplus x \bicart[0.37em] \ar[d, "\begin{pmatrix}
					\id & 0\\
					\id & \id
				\end{pmatrix}"', Isom] \ar[r, "\begin{pmatrix}
					\id & 0\\
					0 & i
				\end{pmatrix}"', near start] & x \oplus y \ar[d, "\begin{pmatrix}
					\id & 0\\
					i & \id
				\end{pmatrix}"', Isom] \\[2em]
			x \bicart \ar[d] \ar[r, "\begin{pmatrix}
					\id \\
					0
				\end{pmatrix}"', near start] & x \oplus x \bicart \ar[r, "\begin{pmatrix}
					\id & 0\\
					0 & i
				\end{pmatrix}"', near start] \ar[d, "\begin{pmatrix}
					0 & \id
				\end{pmatrix}"'] & x \oplus y \ar[d, "\begin{pmatrix}
					0 & \id
				\end{pmatrix}"'] \\[0.9em]
			0 \ar[d, equal] \ar[r] \bicart & x \ar[d, equal] \ar[r, , tail, "i"] & y \ar[d, "g"] \\
			0 \ar[r] & x \ar[r, tail, "j"] & y'
		\end{tikzcd} 
	\end{equation}
	\endgroup
	The rightmost vertical morphism $\begin{pmatrix}
			j & g
		\end{pmatrix}\colon x \oplus y \rightarrow y'$ is a fibration, since it constitutes the leftmost
	vertical morphism in the diagram
	\begingroup
	\setlength\arraycolsep{1.5pt}
	\begin{equation*}
		\begin{tikzcd}[column sep=3em]
			x \oplus y \bicart \ar[d, "\begin{pmatrix}
				\id & 0 \\
				0 & g
			\end{pmatrix}"'] \ar[r, "\begin{pmatrix}
				0 & \id
			\end{pmatrix}", two heads] & y \ar[d, "g"] \\[1.8em]
			x \oplus y' \ar[d, "\begin{pmatrix}
				j & \id
			\end{pmatrix}"', two heads] \ar[r, "\begin{pmatrix}
				0 & \id
			\end{pmatrix}", two heads] & y' \ar[d, two heads] \\[1.2em]
			y' \ar[r, two heads] &z
		\end{tikzcd}
	\end{equation*}
	\endgroup
	where the rightmost vertical morphism is a fibration by assumption, the top square is readily
	seen to be bicartesian and the bottom square is cartesian by \cite[Lemma~4.6]{barwick_exact_2015}.
	
	As all small squares in the diagram \eqref{eq:huge_diagram_lemma_diagram} are cartesian, so is the outer square.
	As the right morphism is a fibration, it is cocartesian as well. The Pasting Lemma
	guarantees that the bottom right square is cocartesian, finishing the proof.
\end{proof}
\begin{prop}\label{prop:extend_exact_diagram}
	Let $\E$ be an exact \cat.
	For every square
	\begin{equation*}
		\begin{tikzcd}
			y \arrow[d, "f"] \arrow[r, two heads, "p"] & z \arrow[d, "g"] \\
			y' \arrow[r, two heads, "p'"] &z' \eqcomma
		\end{tikzcd}
	\end{equation*}
	there exist two morphisms between exact sequences $a\colon \sigma \rightarrow \sigma''$,
	$b\colon \sigma '' \rightarrow \sigma'$ and a composition $c = b \circ a$ such that
	the restriction along $\Delta^2 \rightarrow \Delta^1 \times \Delta^1$
	yields a commutative diagram of the form
    \begin{equation}\label{eq:exact_seq_two_times_two}
       \begin{tikzcd}
            \sigma\arrow[d, "a"]   & x \bicart \arrow[r, tail] \arrow[d]  & y \arrow[r, two heads] \arrow[d] & z \arrow[d, equal] \\
            \sigma''\arrow[d, "b"] & x' \arrow[r, tail] \arrow[d, equal] & w \bicart \arrow[r, two heads] \arrow[d] & z \arrow[d]                      \\
            \sigma'  & x' \arrow[r, tail]           & y' \arrow[r, two heads]          & z'                               \eqcomma
            \end{tikzcd}
    \end{equation}
    where the marked squares are bicartesian and the right rectangle is the original square.
\end{prop}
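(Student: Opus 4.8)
The plan is to construct the middle exact sequence $\sigma''$ and the two comparison maps directly from the given square, and then to recognise the two marked squares in \eqref{eq:exact_seq_two_times_two} as bicartesian by feeding \Cref{prop:big_diagram_lemma} and the Pasting Law; since all constructions are limits or maps induced by universal properties, they are automatically $\infty$-categorical and the only real work is the identification of the bicartesian squares. First I would produce the two outer rows: because $p$ and $p'$ are fibrations, their fibers $x = \fib(p)$ and $x' = \fib(p')$ exist by the second axiom for exact \cats (pull back along $0 \to z$, respectively $0 \to z'$), and the resulting bicartesian squares exhibit $\sigma\colon x \rightarrowtail y \twoheadrightarrow z$ and $\sigma'\colon x' \rightarrowtail y' \twoheadrightarrow z'$ as exact sequences, which will be the top and bottom rows of \eqref{eq:exact_seq_two_times_two}.

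For the middle row I would again invoke the second axiom to form the pullback $w \vcentcolon= y' \times_{z'} z$, whose projection $q\colon w \to z$ is a fibration, being a pullback of $p'$. The square with corners $w, z, y', z'$ is cartesian by construction, and since both of its horizontal maps $q$ and $p'$ are fibrations, applying \Cref{prop:big_diagram_lemma} to the opposite exact \cat $\E\op$ (in which pushouts become pullbacks and cofibrations become fibrations) shows it is in fact bicartesian; this is exactly the middle-right marked square. As fibers are preserved under pullback, the fiber of $q$ is again $x'$, so we obtain the middle exact sequence $\sigma''\colon x' \rightarrowtail w \twoheadrightarrow z$ together with a map $b\colon \sigma'' \to \sigma'$, given by the identity on $x'$, the projection $w \to y'$, and $g$ on the base.

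For the map $a\colon \sigma \to \sigma''$ I would use the universal property of the pullback: the commutativity $g \circ p = p' \circ f$ of the original square yields a canonical map $\mu\colon y \to w$ with $q \circ \mu = p$ and $(w \to y') \circ \mu = f$, which is the identity on the base $z$; this records $a$, and with $c = b \circ a$ the entire datum, the composite of the two right-hand columns reproducing the original square. It then remains to verify that the top-left square $L$, with corners $x, y, x', w$, is bicartesian, for which I would stack it over $\sigma''$:
\begin{equation*}
\begin{tikzcd}
x \ar[r, tail] \ar[d] & y \ar[d, "\mu"] \\
x' \ar[r, tail] \ar[d] & w \ar[d, "q"] \\
0 \ar[r] & z \eqdot
\end{tikzcd}
\end{equation*}
Because $q \circ \mu = p$, the lower square is $\sigma''$ and the outer rectangle is $\sigma$, both bicartesian by the definition of an exact sequence. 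The Pasting Law for pullbacks \HTT{Lem.~4.4.2.1} then shows that $L$ is cartesian, and its dual for pushouts shows that $L$ is cocartesian, so $L$ is bicartesian.

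Finally, one must assemble $\sigma, \sigma'', \sigma'$ together with $a$ and $b$ into a single functor $\Delta^2 \to \Fun(\Delta^1 \times \Delta^1, \E)$ valued in exact sequences; restricting the square variable along a triangle inclusion $\Delta^2 \to \Delta^1 \times \Delta^1$ and discarding the zero corners yields the displayed grid. I expect this coherent assembly to be the main obstacle, rather than the square-by-square checks: each ingredient ($w$ as a pullback, $\mu$ as an induced map, and the two fiber sequences) is manifestly functorial, but some care is needed to realise all three rows and both maps as the restriction of one diagram, so that the existence statement is genuinely \categorical and not merely a claim about $\ho(\E)$.
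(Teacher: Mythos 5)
Your construction follows the same basic route as the paper (pull $p'$ back along $g$ to get $w$ and the fibration $q\colon w \twoheadrightarrow z$, then fill in the fibers), but there are two genuine gaps. First, your justification that the top-left square $L$ is cocartesian is wrong as stated: the dual Pasting Law says that if the \emph{top} square of a vertical stack is a pushout, then the bottom square is a pushout iff the outer rectangle is; it does not let you conclude that the top square is a pushout from the bottom and outer ones being pushouts, which is the direction you need. The correct tool is exactly the implication $(3)\Rightarrow(1)$ of \Cref{prop:big_diagram_lemma}: your stacked diagram, with the cofibrations $x \rightarrowtail y$ and $x' \rightarrowtail w$ on top and the bicartesian squares $\sigma''$ (lower) and $\sigma$ (outer) below, is precisely configuration $(3)$, and that implication is the nontrivial content of the appendix lemma, not a formal pasting argument. (Your cartesian half via \HTT{Lem.~4.4.2.1} is fine.)

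Second, the coherence issue you flag at the end is not a technicality you can defer: in an \cat, producing the objects $x$, $x'$, $w$ and the maps $\mu$, $a$, $b$ one at a time by universal properties does not by itself produce a functor $\Delta^2 \rightarrow \Fun(\Delta^1 \times \Delta^1, \E)$, and ``manifestly functorial'' is not an argument. The paper resolves this by first building the $\Delta^2$-diagram of \emph{fibrations} $p \rightarrow q \rightarrow p'$ (which only requires the pullback $w$ and its universal property) and then observing that restriction from exact sequences, viewed as squares, to their underlying fibrations is a trivial Kan fibration, since every exact sequence is an iterated Kan extension of its fibration (\HTT{Prop.~4.3.2.15}); lifting the $\Delta^2$-diagram along this trivial fibration produces all three rows and both morphisms coherently in one step. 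Without some such device your proof only constructs the diagram in $\ho(\E)$. A minor byproduct: the paper's lift yields a fiber $x''$ for the bottom row together with an equivalence $x' \eqarrow x''$ (via the dual of \Cref{prop:big_diagram_lemma}), rather than a literal equality of fibers, and one identifies along this equivalence at the end.
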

\begin{proof}
	By taking the pullback of $p'$ along $g$, we obtain a factorization of the square
	\begin{equation*}
		\begin{tikzcd}
			y \arrow[r, two heads, "p"] \arrow[d] & z \arrow[d, equal] \\
			w \bicart \arrow[r, two heads,  "p''"] \arrow[d] & z \arrow[d] \\
			y' \arrow[r, two heads, "p'"] & z' \eqdot
		\end{tikzcd}
	\end{equation*}
	Every exact sequence in $\Fun(\Delta^1 \times \Delta ^1, \E)$ is an iterated Kan
	extension of the fibration it restricts to.
	Hence, the restriction functor along the right vertical arrow
	\begin{equation*}
	\Fun(\Delta^1 \times \Delta ^1, \E) \rightarrow \Fun(\Delta ^1, \E)
	\end{equation*}
	restricts, by \HTT{Prop.~4.3.2.15}, to a trivial Kan fibration from the full subcategory
	spanned by the exact sequences to the full subcategory spanned by the fibrations. Hence, this diagram can be
	extended to a diagram of exact sequences of the form
	\begin{equation*}
		\begin{tikzcd}
			x \bicart \arrow[r, tail] \ar[d] & y \arrow[r, two heads, "p"] \arrow[d] & z \arrow[d, equal] \\
			x' \arrow[r, tail] \ar[d] & w \bicart \arrow[r, two heads,  "p''"] \arrow[d] & z \arrow[d] \\
			x'' \arrow[r, two heads, "p'"] & y' \arrow[r, two heads, "p'"] & z' \eqdot
		\end{tikzcd}
	\end{equation*}
	The top left square is bicartesian by \Cref{prop:big_diagram_lemma}. By the dual of \Cref{prop:big_diagram_lemma},
	the morphism $x' \rightarrow x''$ is an equivalence; identifying $x'$ with $x''$ along this equivalence finishes the proof.
\end{proof}
\begin{prop}\label{prop:diagram_lemma_1}
	Let $\E$ be an exact \cat, $p_1\colon y_1 \xrightarrowdbl{} z_1$ and $p_2\colon y_2 \xrightarrowdbl{} z_2$ two fibrations, and $g\colon y_1 \rightarrow z_2$
	a morphism in $\E$. The morphism $y_1 \oplus y_2 \rightarrow z_1 \oplus z_2$ given by the matrix 
	\begin{equation*}
	\begin{pmatrix} 		-p_1 & 0 \\ 		g & p_2 		\end{pmatrix}
	\end{equation*}
	is a fibration.
\end{prop}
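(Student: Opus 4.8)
The plan is to realise the matrix morphism as a composite of three morphisms, each of which is visibly a fibration, and then to invoke the two structural facts about the class $\E^\dagger$ of fibrations: it is a subcategory containing all equivalences, so it is closed under composition, and by axiom~(2) it is stable under pullback. Since $\E$ is additive, morphisms between biproducts are described by matrices and composition is computed by matrix multiplication up to coherent homotopy; as membership in $\E^\dagger$ depends only on the equivalence class of a morphism, it will suffice to produce the factorisation at this level.

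First I would record two building blocks. For any fibration $p\colon y \to z$ and any object $v$, the morphism $\id_v \oplus p\colon v \oplus y \to v \oplus z$ is the pullback of $p$ along the projection $v \oplus z \to z$, hence a fibration by axiom~(2); the same holds for $p \oplus \id_v$. In addition, $-p_1 = (-\id_{z_1})\circ p_1$ is the composite of an equivalence with a fibration, and is therefore a fibration.

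Next I would write down the factorisation. Matrix multiplication yields
\[
\begin{pmatrix} -p_1 & 0 \\ g & p_2 \end{pmatrix}
= \bigl((-p_1)\oplus \id_{z_2}\bigr)\circ
\begin{pmatrix} \id & 0 \\ g & \id \end{pmatrix}\circ
\bigl(\id_{y_1}\oplus p_2\bigr),
\]
where the middle arrow is the endomorphism of $y_1 \oplus z_2$ determined by $g\colon y_1 \to z_2$. By the previous paragraph the outer two factors are fibrations, and the middle factor is an equivalence with inverse the shear $\begin{pmatrix} \id & 0 \\ -g & \id \end{pmatrix}$, hence also a fibration. As $\E^\dagger$ is closed under composition, the displayed composite---and therefore the original matrix morphism---is a fibration.

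The argument is essentially formal, so I do not anticipate a serious obstacle; the only point demanding care is the justification of the matrix calculus in the additive \cat $\E$, namely that the mapping space out of a biproduct splits as a product and that composition is coherently given by matrix multiplication. Granting this, the factorisation above is an equivalence of morphisms and the conclusion is immediate; the remaining work is merely bookkeeping of the entries and the sign on $p_1$.
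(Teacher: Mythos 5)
Your proposal is correct and matches the paper's own argument essentially verbatim: the same three-fold factorisation through $y_1\oplus z_2$ into $\id\oplus p_2$, the shear determined by $g$, and $(-p_1)\oplus\id$, with the outer factors recognised as pullbacks of fibrations along projections and the middle factor as an equivalence. No further comment is needed.
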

\begin{proof}
	The map in question is a composition of the morphisms
	\begin{equation*}
		\begin{tikzcd}
		y_1 \oplus y_2 \arrow[rr, two heads, "\begin{pmatrix} 		\id & 0 \\ 		0 &p_2 		\end{pmatrix}"] & &
		y_1 \oplus z_2 \arrow[rr, two heads, "\begin{pmatrix} 		\id & 0 \\ 		g & \id 		\end{pmatrix}"] & &
		y_1 \oplus z_2 \arrow[rr, two heads, "\begin{pmatrix} 		-p_1 & 0 \\ 		0 & \id 		\end{pmatrix}"] & & 
		z_1 \oplus z_2
		\end{tikzcd}
	\end{equation*}
	where each map is a fibration: the two outer maps are pullbacks of $p_2$ and $-p_1$ along projections, and the one in the middle
	is an equivalence.
\end{proof}
\begin{acknowledgements}
This article originated from the author's master's thesis at the University of Bonn.
The author thanks his master's thesis supervisor Gustavo Jasso for suggesting
this topic, valuable and thorough feedback, fruitful discussions and constant
support in preparing this article, and for sharing his insight into 
proof strategies and obstructions in exact \cats.
\end{acknowledgements}
\emergencystretch=3em

\bibliographystyle{amsalpha}
\bibliography{literature}
\end{document}